\newcommand{\ceil}[1]{\left \lceil{#1}\right \rceil }
\newcommand{\R}{\mathbb R}
\newcommand{\mQ}{\mathcal Q}
\newcommand{\mK}{\mathcal K}
\newcommand{\mI}{\mathcal I}
\newcommand{\mP}{\mathcal P}
\newcommand{\mG}{\mathcal G}
\newcommand{\mN}{\mathcal N}
\newcommand{\mT}{\mathcal T}
\newcommand{\mX}{\mathcal X}
\newcommand{\LMO}{\mathbf{LMO}}
\newcommand{\conv}{\mathbf{conv}}
\newcommand{\res}{\mathbf{res}}
\newcommand{\rec}{\mathbf{rec}}
\newcommand{\supp}{\mathbf{supp}}
\newcommand{\dsupp}{\mathbf{dsupp}}
\newcommand{\range}{\mathbf{range}}
\newcommand{\st}{\mathrm{st}}
\newcommand{\mb}[1]{\mathbf{#1} }
\newcommand{\minimize}[1]{\underset{#1}{\mathrm{minimize}}}
\newcommand{\maximize}[1]{\underset{#1}{\mathrm{maximize}}}
\newcommand{\mini}[1]{\underset{#1}{\mathrm{min}}}
\newcommand{\maxi}[1]{\underset{#1}{\mathrm{max}}}
\newcommand{\argmin}[1]{\underset{#1}{\mathrm{argmin}}}
\newcommand{\argsup}[1]{\underset{#1}{\mathrm{argsup}}}
\newcommand{\argmax}[1]{\underset{#1}{\mathrm{argmax}}}
\newcommand{\subjectto}{\mathrm{subject\; to}}
\newcommand{\coeff}{\mathbf{coeff}}
\newcommand{\sign}{\mathbf{sign}}
\newcommand{\gap}{\mathbf{gap}}
\newcommand{\inte}{\mathbf{int}}
\newcommand{\diam}{\mathbf{diam}}
\newcommand{\cone}{\mathbf{cone}}
\newcommand{\bmat}{\left[\begin{matrix}}
\newcommand{\emat}{\end{matrix}\right]}
\newcommand{\minmaj}{\text{(Min-Maj)}}
\newcommand{\merge}{\text{(Merge)}}
\newcommand{\psimple}{\text{(P-simple)}}
\newcommand{\dsimple}{\text{(D-simple)}}
\newcommand{\pconvex}{\text{(P-convex)}}
\newcommand{\dconvex}{\text{(D-convex)}}
\newcommand{\pgeneral}{\text{(P-general)}}
\newcommand{\dgeneral}{\text{(D-general)}}
\newtheorem{assumption}{Assumption}
\newtheorem{corollary}{Corollary}
\newtheorem{property}{Property}
\newtheorem{lemma}{Lemma}
\newtheorem{thm}{Theorem}
\theoremstyle{definition}\newtheorem{definition}{Definition}
\newenvironment{customprop}[1]
  {\innercustomprop}
  {\endinnercustomprop}
\begin{document}

\title{Screening  for a Reweighted Penalized Conditional Gradient Method
\thanks{
This work was funded in part by the French government under management of Agence Nationale de la Recherche as part of the ``Investissements d'avenir" program, reference ANR-19-P3IA-0001 (PRAIRIE 3IA Institute). We also acknowledge
support the European Research Council (grant SEQUOIA 724063).
The first is also funded in part  by AXA pour la recherche and Kamet Ventures, as well as a Google focused award.}
}


\author{Yifan Sun$^1$ \and Francis Bach$^2$}
\date{%
    $^1$Stony Brook University\\%
    $^2$INRIA-Paris\\[2ex]%
    October 1, 2020
}

\maketitle

\begin{abstract}
The conditional gradient method (CGM) is widely used in large-scale sparse convex optimization, having a low per iteration computational cost for structured sparse regularizers and a greedy approach to collecting nonzeros. We explore the sparsity acquiring properties of  a general penalized CGM (P-CGM) for convex regularizers and a reweighted penalized CGM (RP-CGM) for nonconvex regularizers, replacing the usual convex constraints with gauge-inspired penalties.  This generalization does not increase the per-iteration complexity noticeably. Without  assuming bounded iterates or using line search, we show $O(1/t)$ convergence of the gap of each subproblem, which measures distance to a stationary point. 
 We couple this with a screening rule which is safe in the convex case, converging to the true support at a rate $O(1/(\delta^2))$ where $\delta \geq 0$ measures how close the problem is to  degeneracy. In the nonconvex case the screening rule converges to the true support in a finite number of iterations, but is not necessarily safe in the intermediate iterates.
In our experiments, we verify the consistency of the method and adjust the aggressiveness of the screening rule by tuning the concavity of the regularizer.
\end{abstract}

\section{Introduction}
\label{sec:intro}

Conditional gradient methods (CGMs) are used in constrained optimization to quickly arrive to sparse solutions of large-scale optimization problems. In this paper, we generalize their applicability to nonconvex penalized  (unconstrained) problems and investigate safe screening methods to obtain sparse supports in finite time.
We describe these problems as
\begin{equation}
\minimize{x\in \R^d}\; f(x) + \phi(r_\mP(x))
\label{eq:main0}
\end{equation}
where $f:\R^d\to \R$ is a convex loss function with an $L$-Lipschitz continuous gradient, $\phi:\R_+\to\R$ is a strictly convex monotonically increasing function, and $r_\mP:\R^d\to\R_+$ a  nonconvex variant of a gauge function, defined as the solution to 
\begin{equation}
r_\mP(x) = \min_{c_p \geq 0} \left\{\sum_{p\in \mP_0} \gamma(c_p) p : \sum_{p\in \mP_0} c_p p  = x\right\}
\label{eq:nonconvex-gauge}
\end{equation}
for some concave monotonically increasing function $\gamma:\R_+\to\R_+$ and  $\mP_0$  a finite collection of vectors in $\R^d$. 
In the usual nonzero sparsity case, this penalty reduces to the well-studied nonconvex penalties like SCAD, LSP, or $p$-``norms" for $0<p < 1$.
Problems of this form arise in machine learning, compressed sensing, low-rank matrix factorization, etc, and are often observed in practice to be more effective sparsifiers than their convex relaxations \citep{chen2010convergence}.

In particular, we solve \eqref{eq:main0} using the following iteration scheme
\begin{align*}
s^{(t)} &= \argmin{s \in \R^d}\; \nabla f(x^{(t)})^Ts + \bar h^{(t)}(s) & \minmaj\\
x^{(t+1)} &= (1-\theta^{(t)}) x^{(t)} + \theta^{(t)} s^{(t)} & \merge,
\end{align*}
where $\bar h^{(t)}(s)$ is a convex linearization of $\phi(r_{\mP}(s))$ at $x^{(t)}$.
 We call this the reweighted penalized conditional gradient method (RP-CGM), as it  resembles both the conditional gradient method (CGM) in sparse convex optimization and reweighting schemes in majorization-minorization methods for nonconvex optimization. 
 
\paragraph{Example.} The $\ell_1$ norm is formed by picking $\mP_0 = \{\pm e_1,...,\pm e_d\}$ the signed unit bases, and $\gamma(\xi) = \xi$. Then the solution to \eqref{eq:nonconvex-gauge} is always unique and can be expressed in closed form as $r_\mP(x) =  \|x\|_1$. 
Picking instead a concave penalty $\gamma(\xi) = 2\sqrt{\xi}$ leads to the variation $r_\mP(x) = 2\sum_i \sqrt{|x_i|}$ the ``half norm".
Similar transformations also lead to the smoothed capped absolute deviation (SCAD) penalty, minimum concave penalty (MCP), etc. (See table \ref{tab:concavepenalties}.)

For the transformed 1-norm, the minimization \eqref{eq:nonconvex-gauge} leads to the same support regardless of $\gamma$; (e.g., $r_\mP(x) = \sum_i \gamma(x_i)$). 
However, this consistency of support is not true in general; an example is given in section \ref{sec:nonconvex} showing that in general, nonconvex $\gamma$ leads to \emph{sparser} support.



\paragraph{Penalization transformation $\phi$.} 
When $\phi(\xi) = \xi$ and $\gamma(c_p) = c_p$, then \eqref{eq:main-convex} is a generalization of the LASSO problem; at the other extreme, $\phi(\xi) = \iota_{\xi \leq 1}$ an indicator function can constrain $x\in \mP$, reducing  to the vanilla CGM case. The curvature of $\phi$ controls the potential growth of the variable $x$; we show that though a penalty $\phi$ is possible, a curvature requirement is needed to ensure the growth is not too large to cause divergence; at the same time, it conveniently ensures that  the Fenchel dual of \eqref{eq:main0} is unconstrained, making it easy to acquire a dual feasible point for screening.


\subsection{Related work}

\paragraph{Conditional gradient method.} When  $h(s) = \iota_{\bar\mP}(s)$ the indicator for $s$ in $\mP$, the proposed method is the conditional gradient method (CGM) \citep{frank1956algorithm,dunn1978conditional}. Also called the Frank-Wolfe method, it  has been studied since the 50s  and was revitalized recently \citep{jaggi2013revisiting} for its success at quickly estimating solutions to sparse optimization problems. 
The method is particularly  useful when computing the supporting hyperplane in the \minmaj~step is computationally cheap (e.g., when $\mP$ is the unit ball of the $\ell_1$-norm or the nuclear norm). 
Since then, much work has come from expanding its use to general (atomic) norms 
\citep{hazan2008sparse,clarkson2010coresets,jaggi2013revisiting,tewari2011greedy} 
with many variations such as backward steps \citep{lacostejulienjaggi, nikhilforwardbackward} and fully-corrective steps \citep{vonhohenbalken}. Many connections between the CGM and existing methods have also been discovered, such as to mirror descent \citep{bach2015duality}, cutting plane method \citep{limitedkelley}, and greedy coordinate-wise methods \citep{clarkson2010coresets}.
In its simplest version (with no away-steps, line search, or strongly convex assumptions on $f$) the minimum duality gap in CGM converges at rate $O(1/t)$ \citep{dunn1978conditional}.

\paragraph{Convex gauge function.} When $\gamma(c_p) = c_p$, we define $\kappa_\mP(x):=r_\mP(x)$, which is the usual convex gauge function for the closed convex set $\mP$ \citep{rockafellar1970convex,freund1987dual}.
Gauge functions can be seen as generalized versions of the $\ell_1$-norm, which is a convex promoter of nonzero vector sparsity, and include penalties like the total variation (TV) norm,  nuclear norm,  OWL norm \citep{zeng2014ordered},  OSCAR norm \citep{bondell2008simultaneous}, and general conic constraints.
Several works have looked at optimization over general gauges  \citep{friedlander2014gauge,freund1987dual} and in particular for sparse optimization \citep{chandrasekaran2012convex,jaggi2013revisiting}.

\paragraph{Penalized CGM.} When $\bar h^{(t)}(s)$ is a convex penalty, we refer to the proposed method as the penalized CGM (P-CGM). Compared to CGM,  P-CGM has been much less studied \citep{yu2017generalized,harchaoui2015conditional,mu2016scalable}, and has appeared under different names, like regularized coordinate minimization \citep{dudik2012lifted}. 
An $O(1/t)$ convergence rate has been shown for specific smooth functions \citep{mu2016scalable},  with bounded assumptions on iterates  \citep{bach2015duality}, or with improvement steps to ensure boundedness of sublevel sets \citep{yu2017generalized,harchaoui2015conditional}.
When $f$ is quadratic and for a special form of $\phi$, the P-CGM can be shown to be equivalent to a form of the iterative shrinkage method, and under proper problem conditioning, has linear convergence
\citep{bredies2009generalized,bredies2008iterated}.

\paragraph{Reweighted methods for nonconvex minimization.}
Our main algorithmic novelty is to solve a sequence of reweighted penalized CGM (RP-CGM) iterations in order to accommodate nonlinear $\gamma$, which appear in nonconvex penalties like SCAD or MCP penalties in difference-of-convex or majorization-minimization methods. This results in a nonconvex penalty $h(x)$, which complicates analysis; in practice such norms, solved via iterative reweighting methods,  have been shown to have superior sensing properties
\citep{chen2010convergence, ene2019improved,wolke1988iteratively,daubechies2010iteratively, gong2013general,ochs2015iteratively,wright2009sparse, mairal2014sparse}. We leverage these observations to improve the screening properties of RP-CGM; by increasing the concavity of $\gamma$, we can create an aggressive support recovery method based on an easily computable duality-gap-like residuals. 


\paragraph{Safe screening.}  A \emph{screening rule} returns an estimate of the support of $x^*$ given a noisy approximation $x$. The screening rule is \emph{safe} if there are no false positives (and called \emph{sure} if there are no false negatives). 
Safe screening rules for LASSO were first proposed by \citet{ghaoui2010safe}, and have since been extended to a number of smooth losses and generalized penalties 
\citep{fercoq2015mind,   liu2013safe,malti2016safe,ndiaye2015gap,wangjie,bonnefoyantoine}.  
An interesting related work is the ``stingy coordinate descent'' method \citep{johnson2017stingy} for LASSO, which optimizes the sparse regularized problem in a CGM-like manner, but uses screening to dynamically skip steps; this kind of method can be extended to P-CGM as well for generalized atoms.
In nonconvex optimization, support recovery is discussed by \citet{burke1990identification} for handling nonlinear constraints which are iteratively linearized, and screening rules by \citet{rakotomamonjy2019screening} are proposed for a reweighted proximal gradient method.


\subsection{Contributions and outline}

We analyze the support recovery and convergence properties of P-CGM and RP-CGM on  \eqref{eq:main0}.
We assume that the loss function $f$ is $L$-smooth, the function $\phi(\xi)$ grows at least asymptotically quadratically, the function $\gamma$ has slope bounded away from 0 and $+\infty$, and the set $\mP_0$ is either finite or  a union of a finite set and a nonoverlapping cone.  We give three main contributions.
\begin{itemize}

\item Under mild assumptions the RP-CGM converges to a stationary point. In particular, \emph{without boundedness assumptions on iterates}, using the deterministic step size schedule of $\theta^{(t)} = 2/(1+t)$, the function value error and gap-like residual of  RP-CGM converge as $O(1/t)$. 

\item We offer an online gap-based   screening rule, which at each iteration removes some of the non-support atoms of the true solution $x^*$.  This method is safe for convex penalties and a useful heuristic for nonconvex penalties; for all penalties it converges in finite time to the true support. Having this information can improve caching for improving subproblem efficiency, and can be used in two-stage methods if the method is ended early.

\item  In general, CGM without line search or away steps does not guarantee finite-time support recovery. We thus give a finite-time support identification rate of $O(1/\delta^2)$ on the post-screened atoms, where $\delta$ is a problem-dependent conditioning parameter that measures its distance to degeneracy. 
\end{itemize}

\paragraph{Outline.} We  present  the RP-CGM in three stages, with increasing complexity. In section \ref{sec:simple} we consider the nonconvex element-wise penalty, giving the key intuition behind the general method, with simple proofs and analysis. In section \ref{sec:convex} we consider the generalized convex gauge penalized problem, using P-CGM, and show how to handle simple recession cones in $\mP$. Finally, in section \ref{sec:nonconvex}, we introduce reweighting, and show that RP-CGM has an overall $O(1/t)$ convergence rate and finite support identification using our screening rule. Experimental results support this in section \ref{sec:experiments}.

\section{Reweighted Penalized CGM for simple sparse recovery}

\label{sec:simple}
We begin by considering the optimization problem 
\begin{equation}
    \minimize{x\in \R^d} \quad F(x) := f(x) + \underbrace{\phi(r(x))}_{h(x)}, \qquad r(x) = \sum_{i=1}^d \gamma(|x_i|).
    \label{eq:main-1norm}
\end{equation}
This is the simplification of \eqref{eq:main0} with $r := r_\mP$ and $\mP_0 = \{\pm e_1,...,\pm e_d\}$ the signed unit basis. 
The more general case of the $r_\mP$ gauge-like penalty follows a  similar analysis to what is presented in this section, and can be viewed intuitively as sparsity in a preimage space.
We define the \emph{support of $x$} as the indices of the nonzeros as
$\supp(x) = \{i: x_i \neq 0\}$.
For a method producing iterates $x^{(1)}, x^{(2)}, \to x^*$, we say that this method has \emph{recovered the support at iteration $\bar t$} if for all $t' \geq \bar t$, $\supp(x^{(t)}) = \supp(x^*)$.


\subsection{Stationary points}

For a continuous function $h:\R^d\to\R$, the point $x^*$ is a \emph{Clarke stationary point}   of \eqref{eq:main-1norm} if 
$0\in \nabla f(x^*) + \partial h(x^*)$ where $\partial h(x) = \conv\; \{\lim_{x'\to x} \nabla h(x')\}$
is the \emph{Clarke subdifferential} of $h$ at $x$ \citep{clarke1975generalized, clarke1983nonsmooth}.

\begin{assumption}[$\phi$ and $\gamma$]
Assume that 
\begin{itemize}
    \item $\phi:\R_+\to\R_+$ is convex, monotonically increasing, and differentiable everywhere on its domain,
    \item $\gamma:\R_+\to\R_+$ is concave, monotonically increasing, and differentiable everywhere on its domain, 
    \item the derivative $\gamma'(\xi)$ is lower and upper bounded by 
    \[
    0 < \gamma_{\min} := \lim_{\xi\to _\infty} \gamma'(\xi) \leq \gamma'(\xi) \leq  \lim_{\xi\to 0^+} \gamma'(\xi) =:\gamma_{\max} < +\infty,  \quad \forall \xi \geq 0.
    \]
\end{itemize}
\label{asspt:phigamma1}
\end{assumption}
From these assumptions,  the Clarke subdifferential for $h(x)$ is 
\[
(\partial h(x))_i=
\begin{cases}
\phi'(r(x))\cdot \{\sign(x_i) \, \gamma'(|x_i|)\}, & x_i \neq 0,\\
\phi'(r(x))\cdot [-\gamma_{\max}, \gamma_{\max}], & x_i = 0.
\end{cases}
\]
In other words, the optimality conditions can  be summarized as follows: $x^*$ is a stationary point of \eqref{eq:main-1norm} if 
\begin{eqnarray*}
x^*_i \neq 0 \; &\Rightarrow& \; -\nabla f(x^*)_i =   \phi'(r(x))  \;  \gamma'(|x_i|) \\
x^*_i = 0\; &\Rightarrow& \; -\nabla f(x^*)_i \in  \phi'(r(x))\;  [-\gamma_{\max}, \gamma_{\max}].
\end{eqnarray*}

\paragraph{Example.}
Consider the concave regularizer
$h(x) := \bigg(\sum_i \sqrt{|x_i|+ \xi_0}\bigg)^2$. 
This construction arises from $\phi(\xi) = \xi^2$ and $r(\xi) = \sqrt{|\xi|+\xi_0}$. 
Its Clarke-subdifferential can be expressed element-wise
\[
(\partial h(x))_i  = \left(\sum_j \sqrt{|x_j|+\xi_0}\right)  \cdot
\begin{cases}
[-\xi_0^{-1/2},\xi_0^{-1/2}], & x_i = 0,\\
\left\{ \frac{\sign(x_i)}{\sqrt{|x_i|+\xi_0}} \right\}, & |x_i| > 0.
\end{cases}
\]
The optimality conditions can also be summarized in terms of ``wiggle room"; that is, whenever $x_i = 0$, then $\nabla f(x)_i$ lies in an interval. But when $x_i \neq 0$, $\nabla f(x)_i$ must take a specific value.  When $\gamma$ is concave, however, that value could lie anywhere in the interval $[-\xi_0^{-1/2},\xi_0^{-1/2}]$. In this case, the gradient $\nabla f(x)$ cannot reveal any hints as to the support of $x^*$, even as $x \to x^*$.

\paragraph{Concave penalties} Popular concave penalties are listed in Table  \ref{tab:concavepenalties}. To satisfy assumption \ref{asspt:phigamma1},
we propose a piecewise extension given a concave penalty $\gamma_0$ and a ``boundary point" $\bar \xi$:
\begin{equation}
\gamma(\xi) = 
\begin{cases}
\gamma_0(\xi), & 0 \leq \xi \leq \bar\xi,\\
\gamma_0'(\bar\xi)(\xi - \bar\xi ) + \gamma_0(\bar\xi) , & \xi < \bar\xi.
\end{cases}
\label{eq:locallyconvex}
\end{equation}
This ensures that $\gamma_{\max} = \gamma_0'(\bar \xi) < +\infty$. 
A more complete table of commonly used concave penalties is given by \citet{gong2013general, rakotomamonjy2019screening}.
See also figure \ref{fig:phigamma}.

\begin{table}
\centering
\begin{tabular}{|r|l|c|c|}
\hline
& $\gamma(c)$ & $\displaystyle\lim_{c\to 0} \gamma'(c)$ & $\displaystyle\lim_{c\to +\infty} \gamma'(c)$\\\hline
Fractional fns & $q^{-1} c^q$, $0 < q < 1$ & $+\infty$ & 0\\&&&\\
LSP & $ \log(1+|c|/\theta)$ for $\theta > 0$ & $\theta^{-1}$ & 0\\&&&\\
SCAD & $\begin{cases}
\lambda |c| & |c| \leq \lambda, \\
\frac{-c^2 + 2\theta\lambda |c|-\lambda^2}{2(\theta-1)}& \lambda < |c| \leq \theta\lambda, \\
(\theta+1)\lambda^2/2& |c| \geq \theta\lambda, 
\end{cases}$ for $\theta > 2$ & $\lambda$ & 0 \\&&&\\
MCP & $\begin{cases}
\lambda |c| - c^2/(2\theta) & |c| \leq \theta \lambda,\\
\theta \lambda^2/2 & |c| > \theta\lambda,
\end{cases}$ for $\theta > 0$ & $\lambda$ & 0\\&&&\\
Locally convex & \eqref{eq:locallyconvex}, given $\gamma_0$ and $\bar \xi$ & $\displaystyle\lim_{c\to 0}\gamma_0'(c)$ & $\gamma_0'(\bar \xi)$\\\hline
\end{tabular}
\label{tab:concavepenalties}
\caption{A list of several popular concave penalties, and their slope behavior at extremities. The last entry shows the effect of the piecewise construction, which becomes linear with non-zero slope at large values of $\xi$.}
\end{table}

\subsection{Convex majorant}
Inspired by methods in majorization-minimization and difference-of-convex literature, we propose the RP-CGM, which at each iteration takes a conditional gradient step over the following convex proxy problem
\begin{equation}
    \mini{x\in \R^d} \quad \bar F(x;x^{(t)}) := f(x) + \phi(r(x^{(t)}) - \bar r(x^{(t)};x^{(t)}) + \bar r(x; x^{(t)})).
\label{eq:lin_primal_onenorm}
\end{equation}
where $\bar r(x;\bar x) := \sum_i \gamma'(|\bar x_i|) |x_i|$ is the linearized function of $x$ with reference point $\bar x$.
Expanding the terms, given $\gamma$ concave,
\begin{equation}
\sum_{i=1}^d \underbrace{\gamma(|x^{(t)}_i|) }_{r(x^{(t)}) } + \underbrace{\gamma'(|x^{(t)}_i|)(|x_i| - |x^{(t)}_i|)}_{\bar r(x; x^{(t)}) - \bar r(x^{(t)}; x^{(t)})} \geq \sum_{i=1}^d \gamma(|x_i|) = r(x)
\end{equation}
shows that $\bar F^{(t)}(x) \geq F(x)$ for all $x^{(t)}$, and is indeed a majorant; additionally, when $x = x^{(t)}$, we have $\bar F^{(t)}(x) = F(x)$.


\begin{figure}
    \centering
    \begin{tabular}[t]{ccc}
    \begin{minipage}{3in}
    \includegraphics[width=3in]{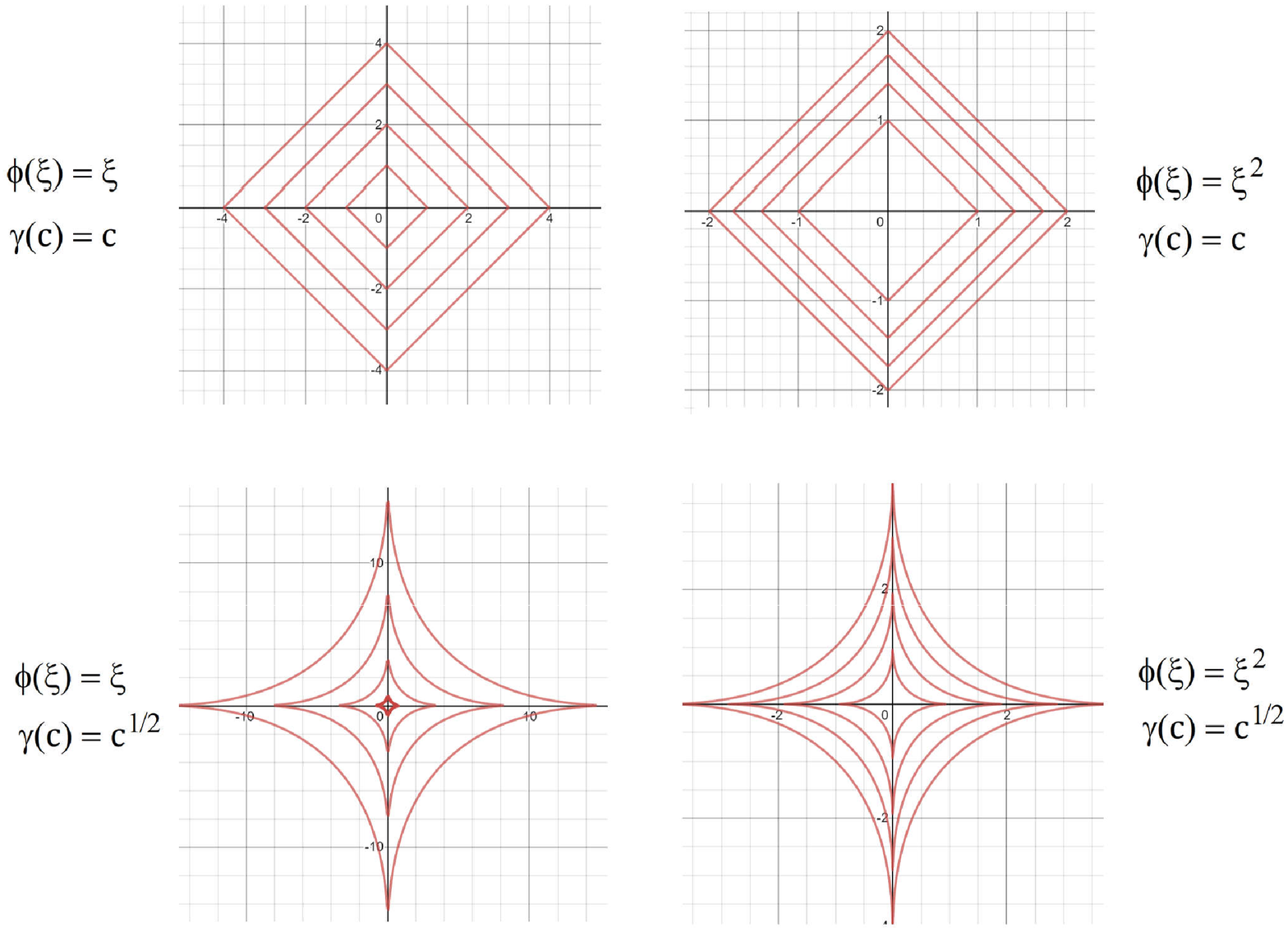}
    \end{minipage}
    & & 
    \begin{minipage}{1.15in}
     \includegraphics[width=1.25in]{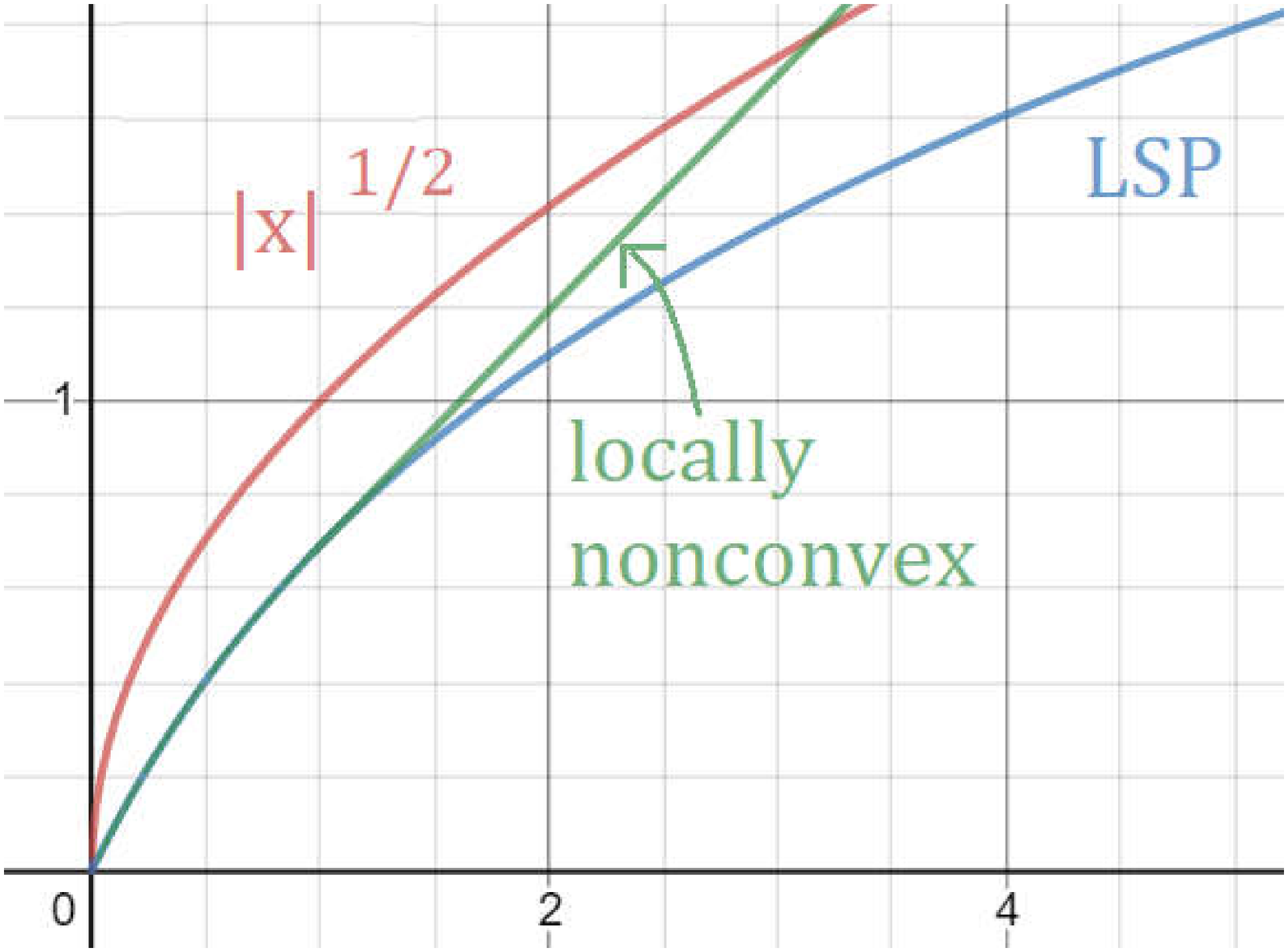}
    \end{minipage}
\end{tabular}
    \caption{\textbf{Transformations $\phi$ and $\gamma$.} Left: Level sets for the penalty $h(x) = \phi(\sum_i \gamma(|x_i|))$. The concave penalty $\gamma$ increases the ``spike-ness"; the convex penalty $\phi$ increases the effect of the aggregate value.
    Right: Three example functions of $\gamma$.
     RP-CGM will behave erratically when $\gamma_{\min} = 0$ (red and blue) and $\gamma_{\max}$ is unbounded (red), so we use a penalty that is bounded on both ends (green).
    }
    \label{fig:phigamma}
\end{figure}

\paragraph{Primal-dual pair.} Given a reference point $\bar x$, we define $r_0 := r(\bar x) - \bar r(\bar x; \bar x)$ and $w_i = \gamma'(|\bar x_i|)$. We can now ignore the reference point, given that $r_0$ and $w$ are fixed.
The penalty of the linearized problem is then 
\[
\bar h(x) = \phi(r_0+\bar r(x)), \qquad \bar r(x) = \sum_i w_i|x_i|.
\]
From assumption \ref{asspt:phigamma1}, for all $i$ $w_i \geq \gamma_{\min} > 0$. Then the convex conjugate of the penalty function is 
\begin{eqnarray*}
\bar h^*(y) &=& \sup_{x}\; x^Ty - \phi(r_0+\sum_iw_i|x_i|)\\
 &=& \sup_{\hat x, \xi}\;\{\xi \hat x^Ty - \phi(r_0+\xi) : \sum_i w_i |\hat x_i| = 1\}\\
&\overset{u_i = w_i\hat x_i, \; v_i = y_i/w_i}{=}& \sup_{u,\xi}\;\{ \xi u^Tv - \phi(r_0 + \xi) : v_i = \frac{y_i}{w_i}, \|u\|_1 = 1\}.
\end{eqnarray*}
In particular, note that $\bar r(x)$ is positive homogeneous in $x$, which is why we can decompose $x = \xi \hat x$ where $\bar r(x) = \xi$ and $\hat x = \xi^{-1} x$, optimizing the two independently. 
The maximization over $u$ does not involve $\phi$, and is achieved at $u = \sign(v_k)e_k$ where $|v_k| = \|v\|_\infty$. Then 
\begin{eqnarray*}
\bar h^*(y) &=&  \sup_{\xi}\;\{ \xi \|v\|_\infty- \phi(r_0 + \xi) + \underbrace{r_0\|v\|_\infty-r_0\|v\|_\infty}_{=0}: v_i = \frac{y_i}{w_i}\}\\
&=& \phi^*(\|v\|_\infty) - r_0 \|v\|_\infty.
\end{eqnarray*}
We can therefore   we rewrite  \eqref{eq:lin_primal_onenorm}  and its Fenchel dual as
\begin{eqnarray*}
\psimple&&
 \mini{x\in \R^d} \quad \bar F(x; \bar x) := f(x) + \underbrace{\phi\left(r_0 + \sum_i w_i |x_i|\right)}_{=:\bar h(x;\bar x)}\\
\dsimple&&
\maxi{z} \quad \bar F_D(z; \bar x) := -f^*(-z) - \phi^*\left(\max_i \frac{|z_i|}{w_i}\right) + r_0 \cdot \left(\max_i \frac{|z_i|}{w_i}\right),
\end{eqnarray*}
where if $\bar x^*$ optimizes \psimple~then $-\nabla f(\bar x^*)$ optimizes \dsimple.

\paragraph{Optimality conditions.} $\bar x^*$ minimizes \psimple~if and only if
\[
0\in \nabla f(\bar x^*) + \partial \bar h(\bar x^*;\bar x).
\]

\begin{assumption}
We assume that $f$ is convex and $L$-smooth w.r.t. $\|\cdot\|_1$:
\begin{equation}
f(y) - f(x) \leq \nabla f(x)^T(y-x) + \frac{L}{2}\|y-x\|_1^2,\quad \forall x,y.
\label{eq:lsmooth-l1}
\end{equation}
\label{asspt:Lsmooth-l1}
\end{assumption}
An important consequence of \eqref{eq:lsmooth-l1} is that, while the set of minimizers of \psimple~may not necessarily be unique, their gradient $\nabla f(\bar x^*)$ will be unique. Specifically, 
\eqref{eq:lsmooth-l1} implies that 
\begin{equation}
f(x)-f(y) \geq \nabla f(y)^T(x-y) +  \frac{1}{2L}\|\nabla f(x)-\nabla f(y)\|_\infty^2, \quad \forall x,y
\label{eq:stronglyconvex:1norm}
\end{equation}
and in particular taking $y = \bar x^*$ where $-\nabla f(\bar x^*) \in \partial \bar h(\bar x^*;\bar x)$, we have 
\[
 f(x)+ \bar h(x;\bar x)-f(\bar x^*) - \bar h(\bar x^*;\bar x) \geq  \frac{1}{2L}\|\nabla f(x)-\nabla f(\bar x^*)\|_\infty^2, \quad \forall x
\]
and thus $x$ is optimal only if $\nabla f(x) = \nabla f(\bar x^*)$.

\subsection{Reweighted penalized CGM}
The RP-CGM on \eqref{eq:main-1norm}  solves at each iteration the \minmaj~step
\begin{equation}
s = \argmin{s\in \R^d} \; \nabla f(x)^Ts + \phi\left(r_0 + \sum_i w_i |s_i|\right).
\label{eq:lmo-l1}
\end{equation}
 Following the derivation of the penalty conjugate, the optimization can be written as a composition of two steps, using a decomposition 
$s = \xi \hat s$, $\xi = \sum_i w_i |s_i|$,
 and noting that  $\sum_i |w_i| |s_i|$ is positive homogeneous w.r.t. $s$, the two variables can be treated separately. 
 Specifically, 
 \[
\hat s = \frac{\sign(v_k)}{|w_k|}e_k, \quad k = \argmax{k}\; |v_k| , \quad v_i = \begin{cases}
-\nabla f(x)_i/|w_i|, & w_i \neq 0,\\
0, & w_i = 0,
\end{cases}
\]
and
\[
\xi = \argmin{\xi \geq 0}\; -\xi \cdot \|v\|_\infty + \phi(r_0 + \xi).
\]


This second step does not appear in the usual CGM, but is required for dealing with sparse penalties rather than constraints. It involves a one-dimensional optimization problem that at worst can be solved efficiently through bisection; more commonly, explicit low-cost expressions exist.
In particular, 
\begin{eqnarray*}
\phi^*(\|v\|_\infty) := \sup_{\xi} \; \xi\cdot \|v\|_\infty - \phi(\xi)
=\sup_{r_0 + \xi} \; (\xi+r_0)\cdot \|v\|_\infty - \phi(r_0 + \xi),
\end{eqnarray*}
and in fact 
\[
(\phi^*)'(\|v\|_\infty) = \argsup{r_0 + \xi'} \; (\xi'+r_0)\cdot \|v\|_\infty - \phi(r_0 + \xi') = \xi + r_0\cdot \|v\|_\infty.
\]
Often, closed-form solutions exist for this mapping $(\phi^*)'$.
To relate to the vanilla CGM, where $\phi(\xi) = \iota_{\cdot \leq 1}(\xi)$, the convex conjugate $\phi^*(\nu) = \nu$ and  is always optimized at $\xi = 1$.

\paragraph{Slope requirement on $\gamma$.}
 To see why the bounds $0 < \gamma_{\min}<\gamma_{\max}<+\infty$ are important, suppose that  at any point $\gamma'(|x_i|)= 0$. Then $v_i = \|v\|_\infty \to +\infty$, and $\xi = (\phi^*)'(\|v\|_\infty) \to +\infty$. Conversely if $\gamma'(|x_i|) = +\infty$ then $u_i = +\infty$. In either case, the bounds on the slope of $\gamma$ are required to control the norm of the iterate $x$. (See also fig. \ref{fig:phigamma}.)

\paragraph{Minimum curvature on $\phi$.}
A key issue with changing the constrained formulation used in CGM to a penalized form is that without a norm bound on the iterates, it is not clear if the method will diverge. As an example, consider the minimization of $f(x) + g(x)$ where $f(x) = \tfrac{1}{2} x^2$, $g(x) = |x|$. 
Then for any $|x| > 1$, the \minmaj~step 
$\min_s   s\cdot x - |s| = -\infty$ is undefined.
We therefore require a sufficient amount of curvature in $\phi$ for convergent iterates.

\begin{assumption}[Lower quadratic bound]
We assume $\phi$ is lower-bounded by a quadratic function
$\phi(\xi) \geq \mu_\phi \xi^2-\phi_0$,
for some $\mu_\phi > 0$ and  $\phi_0$. 
\label{asspt:phi2}
\end{assumption}
\begin{property}
If \eqref{asspt:phi2} holds, then
the derivative of $\phi^*$ is asymptotically nonexpansive; e.g., for some finite-valued $\xi_0$,
$(\phi^*)'(\nu) \leq \frac{\nu}{\mu_\phi}+\xi_0$.
\end{property}
\begin{proof}
Assume that $\phi_0$ is as large as possible; e.g., there exists some finite $\xi_0$ where $\phi(\xi_0) = \mu\xi_0^2 - \phi_0$. Then for all $\xi \geq \xi_0$, for all $\nu\in \partial \phi(\xi)$,
\[
\mu (\xi^2-\xi_0^2 )\leq \phi(\xi)-\phi(\xi_0) \leq \nu(\xi-\xi_0),
\]
and therefore
\[
\nu\geq\mu \frac{(\xi+\xi_0)(\xi-\xi_0)}{\xi-\xi_0}  = \mu \xi+\mu \xi_0 \; \iff \; \xi \leq  \mu^{-1} \nu -  \xi_0.
\]
Thus, for any $\xi$,  $\nu\in \partial \phi( \xi)$ must satisfy 
$\xi \leq \max\{\xi_0,  \mu^{-1} \nu -  \xi_0\} \leq   \mu^{-1} \nu +\xi_0$.
By Fenchel Young, this must apply to all $\xi \in \partial \phi^*(\nu)$.
\qed
\end{proof}

\paragraph{Example: Monomials.}
For $1 \leq \alpha,\beta \leq +\infty$, the following $\phi:\R_+\to\R_+$ and $\phi^*:\R_+\to\R_+$ form a conjugate pair:
\[
\phi(\xi) = \frac{1}{\alpha} \xi^{\alpha}, \qquad \phi^*(\nu) = \frac{1}{\beta}\nu^\beta, \qquad \frac{1}{\alpha} + \frac{1}{\beta} = 1.
\]
In particular, in the case that $\alpha = 1$, then $\beta \to +\infty$, and the function 
\[
\phi^*(\nu) = \lim_{\beta\to+\infty} \frac{1}{\beta}\nu^\beta = 
\begin{cases}
0, & \nu \leq 1\\
+\infty, & \nu > 1.
\end{cases}
\]
In this case, whenever $\nu > 1$ then $\phi^*(\nu) = +\infty$; we exclude this case as P-CGM will not converge in this case. 
When $\alpha \geq 2$, $\phi$ is strongly convex and we can show $O(1/t)$ convergence of P-CGM.
When  $1 < \alpha < 2$, $\phi^*(\nu)$ is finite and the iterates are well-defined, but  the method may converge or diverge.


 \paragraph{Example: Barrier functions.}
Consider 
\begin{equation}
\phi(\xi) = -\frac{1}{\beta}\log(C-\xi) - \frac{\xi}{C\beta} + \frac{\log(C)}{\beta},
\label{eq:logbarrier}
\end{equation}
which is a log-barrier penalization function for $\xi \leq C$; as $\beta \to +\infty$, $\phi(\xi)$  approaches the indicator function for this constraint. Its conjugate is 
\[
\phi^*(\nu) = C\nu   - \beta^{-1} \log(C\beta\nu+1),
\]
achieved at $\xi =C^2\beta\nu/(C\beta \nu+1)$. For all $C > 0, \beta > 0$, and $\nu\neq -(C\beta)^{-1}$, both $\phi^*$ and $\xi^*$ exist and are finite. Note also the implicit constraint, as $\phi(\kappa_\mP(x))$ is finite only if $x\in C\mP$.

The entire RP-CGM algorithm for the simple sparse optimization problem is given in Alg. \ref{alg:simple}

\begin{algorithm}
\caption{RP-CGM on simple sparse optimization}
\begin{algorithmic}[1]

\Procedure{RP-CGM}{$f$, $\phi$, $\gamma$,  $T$}       
    \State Initialize with any $x^{(0)}\in \R^d$
    \For{$t = 1,...T$}
        \State Compute negative gradient $z = -\nabla f(x^{(t)})$

        \State Compute weights $w$ and offset $r_0$ \Comment{Reweight}
        \[
        w_i = \gamma'(|x^{(t)}_i|), \quad i = 1,...,d, \qquad r_0 = r(x^{(t)}) - \bar r(x^{(t)};x^{(t)})
        \]
        \State Compute next atom $s =\xi \cdot \sign(v_k) e_k $ where $v_i = z_i / w_i$, and\Comment{Min-maj}
        \[
        k = \argmax{i}\; |v_i|, \quad \xi = (\phi^*)'(\|v\|_\infty)-r_0
        \]
        
        \State Update $x^{(t+1)}$ \Comment{Merge}
        \[
        x^{(t+1)} = (1-\theta^{(t)})x^{(t)} + \theta^{(t)} s, \quad \theta^{(t)} = 2/(1+t)
        \]
    \EndFor
\Return  $x^{(T)}$
\EndProcedure
\end{algorithmic}
\label{alg:simple}
\end{algorithm}

\subsection{Convergence of RP-CGM}
The duality gap of the nonconvex original problem \eqref{eq:main-1norm}  is (as expected) bounded away from 0, and is thus an inadequate measure of suboptimality. 
\begin{property}[Duality gap of nonconvex regularizer] 
For $h(x) = \phi(r(x))$,
\begin{eqnarray*}
h(x) - h^{**}(x) \geq \phi(r(x)) - \phi(\gamma_{\min} \|x\|_1).
\end{eqnarray*}
\end{property}

\begin{proof}
First,
\[
h^*(z) = \sup_x \; x^Tz - \phi(r(x))
=  \sup_x \phi^*\left(\frac{x^Tz}{r(x)}\right).
\]
Picking 
\[
\begin{cases}
x_i\to \sign(z_i) \cdot \infty &\text{ if }  |z_i| = \|z\|_\infty\\
x_i=0 & \text{ otherwise } 
\end{cases}
\quad \Rightarrow \quad 
\frac{x^Tz}{r(x)} = \frac{\|z\|_\infty}{\gamma_{\min}}
\]
gives
$h^*(z) \geq \phi^*\left(\frac{\|z\|_\infty}{\gamma_{\min}}\right)$, and so  
\[
h^{**}(x)\leq  \sup_z \; x^Tz -  \phi^*\left(\frac{\|z\|_\infty}{\gamma_{\min}}\right)
= \phi(\|x\|_1 \gamma_{\min}).
\]
\qed
\end{proof}
Instead, we measure convergence via the gap of the linearized problem at $\bar x = x$.
\begin{property}[Residual]
The gap of the linearized problem at $x$ with reference point $x$, denoted as  $\res(x) :=\gap(x,-\nabla f(x);x)$, satisfies
\[
 \res(x)  \geq 0\;\forall x, \qquad \res(x) =0 \iff \text{ $x$ is a stationary point of 
 (\ref{eq:main-1norm}).}
\]
\label{prop:gapres-l1}
\end{property}

\begin{proof}
Since $\res(x)$ is a duality gap, it is always nonnegative. Explicitly,
denote $\nu = \max_i \left(\frac{|\nabla f(x)_i|}{\gamma'(|x_i|)}\right)$. 
 Then, since $f(x) + f^*(\nabla f(x)) = \nabla f(x)^Tx$,
\begin{eqnarray*}
\res(x) 
&=& x^T\nabla f(x) + \phi( r(x)) +  \phi^*(\nu)+ (\bar r(x;x)- r(x))\cdot \nu\\
&\overset{(a)}{\geq}& x^T\nabla f(x) +  r(x)\nu+ (\bar r(x;x)- r(x))\cdot \nu\\
&= & x^T\nabla f(x) +\sum_i \gamma'(|x_i|)|x_i| \cdot \max_j \left(\frac{|\nabla f(x))_j|}{\gamma'(|x_j|)}\right)\\
&\overset{(b)}{\geq} & x^T\nabla f(x) -x^T\nabla f(x) = 0.
\end{eqnarray*}
Tightness of (a) occurs iff Fenchel-Young is satisfied with equality, e.g. $\phi'(r(x)) = \nu$.
Tightness of (b) occurs iff 
\begin{equation}
\max_j \;\frac{|\nabla f(x)_j|}{\gamma'(|x_j|)} = \frac{-\nabla f(x)_i \cdot \sign(x_i)}{\gamma'(|x_i|)}, \quad \forall x_i \neq 0.
\label{eq:optcond-helper2}
\end{equation}
Combining these two observations, then $\res(x) = 0$ if and only if
\[
-\nabla f(x)_i \in \phi'(r(x)) \cdot \begin{cases}
\{\gamma'(|x_i|) \sign(x_i)\} & x_i \neq 0\\
 [-\gamma'(0),\gamma'(0)] & x_i = 0,
\end{cases}
\]
which is the condition for $x = x^*$ a stationary point of \eqref{eq:main-1norm}.
\qed
\end{proof}

\begin{thm}[Convergence of RP-CGM, simple case]
Pick any $x^{(0)}\in \R^d$ where $h(x^{(0)})$ is finite. Define the sequence $x^{(t)}$, $t = 1,...$ by the steps dictated in  \minmaj~and \merge, using the step size sequence $\theta^{(t)} = 2/(1+t)$. Given assumptions \ref{asspt:phigamma1}, \ref{asspt:Lsmooth-l1}, \ref{asspt:phi2}, 
then 
\[
F(x^{(t)}) - F(x^*) = O(1/t), \qquad \min_{t' \leq t}\,\res(x^{(t')}) = O(1/t).
\]
\label{th:convergence-l1}
\end{thm}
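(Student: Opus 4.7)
My strategy is to view each RP-CGM step as a single classical-CGM step applied to the \emph{convex} linearized majorant $\bar F(\cdot;x^{(t)})$, and then transfer the descent back to $F$ through the majorization $F(x)\leq \bar F(x;x^{(t)})$ with equality at $x^{(t)}$.

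First, I would derive the per-iteration descent inequality. Applying Assumption~\ref{asspt:Lsmooth-l1} to $f$ along $x^{(t+1)}-x^{(t)}=\theta^{(t)}(s^{(t)}-x^{(t)})$, and using convexity of $\bar h(\cdot;x^{(t)})$ on that line segment,
\[
\bar F(x^{(t+1)};x^{(t)})\leq F(x^{(t)}) - \theta^{(t)} g^{(t)} + \frac{L\theta^{(t)2}}{2}\|s^{(t)}-x^{(t)}\|_1^2,
\]
where $g^{(t)}:=\nabla f(x^{(t)})^{T}(x^{(t)}-s^{(t)})+\bar h(x^{(t)};x^{(t)})-\bar h(s^{(t)};x^{(t)})$. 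By the same conjugate computation that produced \dsimple\ (evaluated at the dual iterate $z=-\nabla f(x^{(t)})$), $g^{(t)}$ coincides with the primal--dual gap of the linearized pair, i.e.\ $\res(x^{(t)})$, and Property~\ref{prop:gapres-l1} gives $g^{(t)}\geq 0$. Combining with $F(x^{(t+1)})\leq \bar F(x^{(t+1)};x^{(t)})$,
\[
F(x^{(t+1)})\leq F(x^{(t)}) - \theta^{(t)}\res(x^{(t)}) + \frac{L\theta^{(t)2}}{2}\|s^{(t)}-x^{(t)}\|_1^2.
\]

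The main obstacle is bounding $\|s^{(t)}-x^{(t)}\|_1$ uniformly in $t$ without a priori boundedness of iterates; this is where the curvature of $\phi$ is essential. From Assumption~\ref{asspt:phigamma1}, $\gamma$ is nonnegative concave with $\gamma'\geq\gamma_{\min}$, so $r(x)\geq \gamma_{\min}\|x\|_1$, and Assumption~\ref{asspt:phi2} then gives the coercive lower bound
\[
F(x)\;\geq\;\inf f \;+\; \mu_\phi\gamma_{\min}^2\|x\|_1^2 \;-\;\phi_0,
\]
so every sublevel set of $F$ is $\|\cdot\|_1$-bounded. Separately, the closed form $s^{(t)}=\xi^{(t)}\sign(v_k)e_k/w_k$ together with $w_i\geq\gamma_{\min}$ and the property $(\phi^*)'(\nu)\leq \nu/\mu_\phi+\xi_0$ gives $\|s^{(t)}\|_1\leq \|\nabla f(x^{(t)})\|_\infty/(\mu_\phi\gamma_{\min}^2)+\xi_0/\gamma_{\min}$, which by $L$-smoothness is controlled affinely by $\|x^{(t)}\|_1$. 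I then close the boundedness argument inductively: the descent inequality yields $F(x^{(t+1)})\leq F(x^{(t)})+\tfrac{L\theta^{(t)2}}{2}\|s^{(t)}-x^{(t)}\|_1^2$, and since $\sum_t \theta^{(t)2}<\infty$, the accumulated overshoot is finite, so $F(x^{(t)})$ remains in a fixed sublevel set, giving $\|x^{(t)}\|_1\leq R$ and a uniform diameter $D:=\sup_t\|s^{(t)}-x^{(t)}\|_1<\infty$.

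Finally, with $D$ finite and $\theta^{(t)}=2/(1+t)$, the telescoped descent yields
\[
\sum_{t'=0}^{t}\theta^{(t')}\res(x^{(t')})\;\leq\; F(x^{(0)})-F_\infty \;+\;\tfrac{LD^2}{2}\sum_{t'}\theta^{(t')2},
\]
with $F_\infty:=\lim_t F(x^{(t)})$ existing by the monotonic-plus-summable structure. A dyadic-window argument in the style of Jaggi then converts the bounded weighted sum into $\min_{t'\leq t}\res(x^{(t')})=O(1/t)$. For the objective rate, the same descent inequality, viewed on the convex majorant and combined with $\res(x^{(t)})\geq \bar F(x^{(t)};x^{(t)})-\min\bar F(\cdot;x^{(t)})$, feeds into the standard CGM induction $F(x^{(t+1)})-F^\star\leq (1-\theta^{(t)})(F(x^{(t)})-F^\star)+\tfrac{LD^2\theta^{(t)2}}{2}$ with $F^\star=F(x^*)$ (interpreted as $F_\infty$ in the nonconvex case), producing the advertised $O(1/t)$ rate.
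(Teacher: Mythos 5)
Your overall architecture matches the paper's: a one-step descent inequality in which the linear-plus-penalty decrease equals $\theta^{(t)}\res(x^{(t)})$ (this identification of $g^{(t)}$ with the linearized duality gap is correct), followed by a bound on the atom size, a telescoping/Jaggi-style window argument for $\min_{t'\le t}\res(x^{(t')})$, and an induction for the objective rate. The descent lemma and the final rate extraction are fine.

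The gap is in the middle step, which is the actual crux of the theorem (the paper explicitly advertises convergence \emph{without} boundedness assumptions on iterates). Your argument for a uniform diameter $D=\sup_t\|s^{(t)}-x^{(t)}\|_1<\infty$ is circular in a way that does not close for general problem constants: you bound $\|s^{(t)}\|_1$ affinely in $\|\nabla f(x^{(t)})\|_\infty$, hence affinely in $\|x^{(t)}\|_1$, and you bound $\|x^{(t)}\|_1$ by a sublevel set of $F$ via coercivity, which requires $F(x^{(t)})\le F(x^{(0)})+C$, which in turn requires $\sum_t\theta^{(t)2}\|s^{(t)}-x^{(t)}\|_1^2$ to be finite --- the very thing being established. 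If you try to close this loop inductively, the sublevel radius scales as $R\sim\sqrt{C}$, so the accumulated overshoot scales as $\tfrac{L}{2}(1+L/(\mu_\phi\gamma_{\min}^2))^2 R^2\sum_t\theta^{(t)2}\sim \mathrm{const}\cdot C$, and the induction only closes when that constant is below $1$, i.e., under an unstated smallness condition on $L/(\mu_\phi\gamma_{\min}^2)$. The paper's Lemma \ref{lem:onestepdescend2} avoids this by bounding the gradient at the iterate against the gradient at the stationary point via the strongly-convex-conjugate inequality \eqref{eq:stronglyconvex:1norm}, giving $\|\nabla f(x^{(t)})-\nabla f(x^*)\|_\infty\le\sqrt{2L\Delta^{(t)}}$; the atom size then enters the descent recursion as $\mathrm{const}+O(\sqrt{\Delta^{(t)}})$ (plus an averaged history term $\bar\Delta^{(t-1)}$ coming from the convex-combination expansion of $\kappa_\mP(x^{(t)})$, not from coercivity). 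After squaring, the $\Delta^{(t)}$-dependence carries a $(\theta^{(t)})^2$ prefactor and is absorbed by the $-\theta^{(t)}\res(x^{(t)})\le-\theta^{(t)}\Delta^{(t)}$ term once $t\gtrsim 6B$, so the joint induction $\Delta^{(t)}\le G/t$ closes unconditionally. You would need to replace your sublevel-set/coercivity step with this gap-controlled bound (or something equivalent) for the proof to go through without extra hypotheses.
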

This is a special case of Theorem \ref{th:convergence}, which is proven in section \ref{sec:nonconvex} and appendix \ref{app:convergence}. The proof is inductive, and shows that $O(1/t)$ behavior ``kicks in" at a large enough $t$; explicit constants are given in section \ref{sec:nonconvex}.

\subsection{Convex support recovery and screening}
From optimality conditions,
$\bar x^*$ minimizes \psimple~if 
\begin{equation}
- \frac{\nabla f(\bar x^*)_i}{\alpha w_i} \in 
\begin{cases}
\{\sign(\bar x^*_i)\}, & \bar x^*_i \neq 0\\
[-1,1], &\bar x^*_i = 0,
\end{cases}
\label{eq:main-lin-onenorm-opt}
\end{equation}
for some $\alpha \in \partial_{\xi} \phi(r_0 + \xi)$ at $\xi = \sum_i w_i\bar x_i^*$.
In other words, for this convex reweighting problem, the sparsity pattern of $\bar x^*$ can be partially ascertained from $\nabla f(\bar x^*)$, in that the set of nonzeros of $\bar x^*$ must be contained in the set of maximal indices of the reweighted $\nabla f(\bar x^*)$. Formally, define 
\begin{equation}
\dsupp(x; \bar x) :=\left\{i : \frac{|\nabla f(x)_i|}{\gamma'(|\bar x_i|)} =\max_j \frac{|\nabla f(x)_j|}{\gamma'(|\bar x_j|)}  \right\}.
\label{eq:dsupp-l1}
\end{equation}
Then the optimality condition \eqref{eq:main-lin-onenorm-opt} states that $\supp(\bar x^*) \subseteq \dsupp(\bar x^*;\bar x)$, where $\bar x^*$ minimizes \psimple. We are in particular interested in $\bar x^* = x^*$ the stationary point of \eqref{eq:main-1norm}.
From this observation, we have our first screening property
\begin{property}[Screening for simple sparsity.]
If $\|\nabla f(x)-\nabla f( x^*)\|_\infty \leq \epsilon$, then 
\begin{equation}
\|\nabla f(x)\|_\infty - |\nabla f(x)_i| >  2\epsilon\gamma_{\max}\; \Rightarrow \;  x_i^* = 0.
\label{eq:screen-l1-prop}
\end{equation}
\end{property}
\begin{proof}
First, define $v_i = \frac{|\nabla f(x)_i|}{w_i}$ and $\bar v_i^* = \frac{|\nabla f( x^*)_i|}{w_i} $. 
Then 
\[
\frac{\|\nabla f(x^*)-\nabla f(x)\|_\infty }{\gamma_{\max}}\leq  \frac{\|\nabla f(x^*)-\nabla f(x)\|_\infty }{\max_i w_i}  \leq \|v-\bar v^*\|_\infty.
\]
By optimality conditions 
$\bar v_i^* < \|\bar v^*\|_\infty \Rightarrow \bar x_i^* = 0$.
Thus, \eqref{eq:screen-l1-prop} implies
\[
2\epsilon >   \frac{\|\nabla f(x)\|_\infty - |\nabla f(x)_i|)}{\gamma_{\max}} \geq 
\|\bar v\|_\infty-\bar v_i
\]
and therefore
$\|\bar v^*\|_\infty-\bar v^*_i \leq \|\bar v\|_\infty-\bar v_i + 2\epsilon  < 0$.
\qed
\end{proof}


While this fundamental concept is very intuitive, it is not practical, as one does not have access to $\nabla f(x^*)$ until the algorithm has terminated. 
In the convex setting, this is overcome by showing that the duality gap between \psimple~and \dsimple~upper bounds the gradient error. In the nonconvex setting, this bound is more complicated, but still leads to a useful heuristic screening rule.
\begin{property}[Residual bound on gradient error.]
Define $D(x) = \sum_{i=1}^d \gamma(|x_i|)-\gamma(|x_i^*|) - \gamma'(|x_i|)(|x_i-x_i^*|)$
the linearization error at $x$. 
Denoting $x^*$ a stationary point of \eqref{eq:main-1norm}, then
\[
\|\nabla f(x^*)-\nabla f(x)\|_\infty \leq \frac{LD(x)}{2\gamma_{\min}} + \sqrt{\frac{L^2D(x)^2}{4\gamma^2_{\min}} + L\res(x) + LD(x) \frac{\|\nabla f(x)\|_\infty}{\gamma_{\min}}}.
\]
\label{prop:resboundsgrad-l1}
\end{property}
Note that if $r(x) = \|x\|_1$ then $D(x) = 0$.
\begin{proof}
First, note that 
\begin{eqnarray}
\phi^*\left(\max_i \frac{|z_i|}{w_i}\right) + r_0\cdot \left(\max_i \frac{|z_i|}{w_i}\right)  &=&  \sup_y\; y^Tz - \phi(r_0 + \sum_i w_i |y_i|) \nonumber\\
&\geq &   z^Tx^* - \phi(r_0 + \sum_i w_i |x^*_i|). \label{eq:phistar-ell1-proofhelper1}
\end{eqnarray}
As before, $\res(x) = \bar F(x;x)-\bar F_D(-\nabla f(x);x)$.
Taking $(x,-\nabla f(x))$ as a feasible primal-dual pair and reference point $\bar x=x$, and denoting $w_i =\gamma'(|\bar x_i|)$, $\epsilon(x) = \phi(r_0+\sum_i w_i |x_i^*|) - \phi(r(x^*))$, and $\nu = \max_i \frac{|\nabla f(x)_i|}{w_i}$,
\begin{eqnarray*}
\res( x) &=&\underbrace{f(x) + f^*(\nabla f(x))}_{\text{use Fenchel-Young}}+\phi(r(x)) + \underbrace{\phi^*(\nu) -r_0 \cdot (\nu) }_{\text{use \eqref{eq:phistar-ell1-proofhelper1}}} \\
&\geq & \nabla f(x)^T(x- x^*) + \phi(r(x))   - \phi\left(r_0 + \sum_i w_i | x^*_i|\right)\\
&\overset{+\epsilon(x)-\epsilon(x)}{\geq}  & \nabla f(x)^T(x- x^*) + \underbrace{\phi(r(x)) -\phi(r(x^*))}_{\text{convex in $x$}}- \epsilon(x)\\
&\overset{g\in \partial h(x^*)}{\geq} & \nabla f(x)^T(x- x^*) + g^T(x- x^*) - \epsilon(x).
\end{eqnarray*}
Picking in particular $g = -\nabla f( x^*)$,
\[
\res(x) +\epsilon(x) \geq  (x- x^*)^T( \nabla f(x) -\nabla f( x^*)) 
\overset{(\star)}{\geq} \frac{1}{L}\|\nabla f( x^*)-\nabla f(x)\|_\infty^2,
\]
where $(\star)$ follows from assumption \ref{asspt:Lsmooth-l1}.


Next, note that
\[
\epsilon(x) = \phi(r(x)-r(x;x)+r(x^*;x))-\phi(r(x^*)) 
\leq \phi'(r(x^*))D(x),
\]
where in general,  $D(x) \leq (\gamma_{\max}-\gamma_{\min})\|x-x^*\|_1$ and $D(x) = 0$ if $\gamma(\xi) = \xi$ (convex case).
Noting that, at optimality, 
\[
\phi'(r(x^*)) = \max_i\frac{|\nabla f(x^*)_i|}{\gamma'(|x_i^*|)}\leq \frac{\|\nabla f(x^*)\|_\infty}{\gamma_{\min}},
\]
then
\[
\gamma_{\min}\phi'(r(x^*)) \leq \|\nabla f(x^*)\|_\infty \leq  \|\nabla f(x)\|_\infty + \|\nabla f(x) - \nabla f(x^*)\|_\infty 
\]
and overall,
\begin{eqnarray*}
\|\nabla f(x^*)-\nabla f(x)\|_\infty^2 
&\leq& L \res(x) + LD(x)\frac{  \|\nabla f(x)\|_\infty + \|\nabla f(x) - \nabla f(x^*)\|_\infty }{\gamma_{\min}}.
\end{eqnarray*}
This inquality is quadratic in $\|\nabla f(x)-\nabla f(x^*)\|_\infty$, which leads to the stated bound. 
\qed
\end{proof}

From these two properties, we immediately get a screening rule for \eqref{eq:main-1norm}:
\begin{thm} [Heuristic screening rule.]
 For any $x$ and some choice of $\epsilon > 0$, define 
\begin{equation}
\mI_\epsilon^{(t)} = \left\{i : \|\nabla f(x)\|_\infty - |\nabla f(x)_i| \geq    \epsilon + 2\sqrt{L\res(x)+\epsilon } \right\}.
\label{eq:screening-l1}
\end{equation}
If 
\[
\epsilon \geq \frac{L D(x)}{\gamma_{\min}} \; \max\left\{\frac{1}{2},\frac{L D(x)}{4\gamma_{\min}} + \|\nabla f(x)\|_\infty\right\}
\]
then 
 $ x_i^* = 0$ for all $i\in \mI_\epsilon^{(t)}$, where  $ x^*$ any minimizer of \eqref{eq:main-1norm}. 
\label{th:screen-l1}
\end{thm}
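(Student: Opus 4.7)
The plan is to chain the two preceding properties: Property~\ref{prop:resboundsgrad-l1} bounds the gradient error $\|\nabla f(x) - \nabla f(x^*)\|_\infty$ in terms of quantities available at the current iterate $x$, while the simple screening property just before it converts any such gradient-error bound into a coordinate-wise zero test. The role of the hypothesis on $\epsilon$ is to absorb the $D(x)$-dependent terms under the square root in Property~\ref{prop:resboundsgrad-l1} so that the resulting threshold matches the inequality defining $\mI_\epsilon^{(t)}$.

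Concretely, I would abbreviate $a := \frac{LD(x)}{2\gamma_{\min}}$, so that Property~\ref{prop:resboundsgrad-l1} reads
\[
\|\nabla f(x) - \nabla f(x^*)\|_\infty \;\leq\; a + \sqrt{a^2 + L\,\res(x) + 2a\,\|\nabla f(x)\|_\infty}.
\]
Multiplying the hypothesis through by $\tfrac{\gamma_{\min}}{LD(x)}$ (i.e.\ dividing by $2a$) reveals that it is equivalent to the two inequalities $\epsilon \geq a$ and $\epsilon \geq a^2 + 2a\|\nabla f(x)\|_\infty$, corresponding to the two arguments of the $\max$. Substituting the second one directly into the radicand collapses the bound to
\[
\|\nabla f(x) - \nabla f(x^*)\|_\infty \;\leq\; a + \sqrt{L\,\res(x) + \epsilon}.
\]
Applying the simple screening property to this estimate then yields the zero-test threshold $2\gamma_{\max}\bigl(a + \sqrt{L\,\res(x)+\epsilon}\bigr)$, and using the first hypothesis $a \leq \epsilon$, together with absorbing the $\gamma_{\max}$ constants into $\epsilon$, reduces this threshold to $\epsilon + 2\sqrt{L\,\res(x)+\epsilon}$, which is exactly the condition defining $\mI_\epsilon^{(t)}$. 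Hence every $i\in \mI_\epsilon^{(t)}$ satisfies $x_i^* = 0$.

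The main obstacle is the bookkeeping of the $\gamma_{\max}$ factor carried through from the simple screening property, and verifying that the max-form hypothesis on $\epsilon$ simultaneously dominates the additive contribution $a$ outside the square root and the radicand contribution $a^2 + 2a\|\nabla f(x)\|_\infty$ inside. A useful sanity check is the convex case: when $\gamma(\xi)=\xi$, one has $D(x) = 0$, $a=0$, and the hypothesis permits $\epsilon=0$, so the rule specializes to the standard safe LASSO condition $\|\nabla f(x)\|_\infty - |\nabla f(x)_i| > 2\sqrt{L\,\res(x)}$, which is reassuring.
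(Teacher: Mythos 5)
Your overall strategy is the one the paper intends: Theorem~\ref{th:screen-l1} is stated right after the two preceding properties and the text claims it follows ``from these two properties,'' so chaining Property~\ref{prop:resboundsgrad-l1} with the screening condition \eqref{eq:screen-l1-prop} is the correct route, and your treatment of the hypothesis on $\epsilon$ is right: with $a := \tfrac{LD(x)}{2\gamma_{\min}}$ the hypothesis is equivalent to $\epsilon \geq a$ together with $\epsilon \geq a^2 + 2a\|\nabla f(x)\|_\infty$, and the second inequality indeed collapses the radicand of Property~\ref{prop:resboundsgrad-l1} to $L\res(x)+\epsilon$, yielding $\|\nabla f(x)-\nabla f(x^*)\|_\infty \leq a + \sqrt{L\res(x)+\epsilon}$. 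That is the nontrivial algebra, and you have it.

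The gap is in your final step. The screening property converts a gradient-error bound $\epsilon'$ into the threshold $2\epsilon'\gamma_{\max}$, so a literal chaining produces the threshold $2\gamma_{\max}a + 2\gamma_{\max}\sqrt{L\res(x)+\epsilon}$, and ``absorbing the $\gamma_{\max}$ constants into $\epsilon$'' is not an available move: the factor $2\gamma_{\max}$ multiplies the $\sqrt{L\res(x)+\epsilon}$ term, which appears in the theorem with coefficient exactly $2$ and carries no $\epsilon$-tunable slack, so the reduction only goes through when $\gamma_{\max}\leq 1$; and even then the additive part needs $\epsilon \geq 2\gamma_{\max}a$, which is strictly stronger than the $\epsilon \geq a$ supplied by the first branch of the max whenever $\gamma_{\max} > 1/2$. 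Your convex-case sanity check passes only because there $a=0$ and $\gamma_{\max}=1$ simultaneously, so it does not exercise the step that fails. To actually close the argument you must track the weights: apply the stationarity condition in the reweighted coordinates $v_i = |\nabla f(x)_i|/\gamma'(|x_i|)$ and show that the stated threshold on $\|\nabla f(x)\|_\infty - |\nabla f(x)_i|$ forces $v_i^* < \|v^*\|_\infty$, keeping explicit which of $\gamma_{\min},\gamma_{\max}$ enters each direction of the comparison. It is fair to note that the paper itself never spells this step out and that the constants in its own property/theorem pair do not literally match either (the proof of \eqref{eq:screen-l1-prop} even has its final inequalities reversed), so you have correctly located where the difficulty sits --- but as written your proposal asserts the reduction rather than proving it, and the assertion is false for general $\gamma_{\max}$.
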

Note that in the convex case ($\gamma(|x_i|) = |x_i|$) then $D(x) = 0$ and $\epsilon = 0$ is a safe choice, for all $x$. In the general case, since we do not know $D(x)$, we cannot guarantee the safety of an intermediate iterate; however, since $D(x^*) = 0$ by definition of stationary point, then $x^{(t)}\to x^*$ implies $D(x^{(t)}) \to 0$. Picking any decaying sequence $\epsilon^{(t)}\to 0$, therefore, forms a rule that  converges to the true support.

\paragraph{Degeneracy and support recovery guarantee.}
Following the terminology introduced in \cite{hare2011identifying}, we say that $x^*$ is a \emph{degenerate solution} if $\supp(x^*) \neq \dsupp(x^*;x^*)$; that is, there exists $i$ where 
\[
x_i^* = 0 \quad \text{ and }  \quad \frac{|\nabla f(x^*)_i|}{\gamma'(|x^*_i|)} = \max_j\, \frac{|\nabla f(x^*)_j|}{\gamma'(|x^*_j|)}.
\]
Finite-time support recovery is not possible for degenerate solutions \citep{lewis2011identifying,hare2011identifying,burke1988identification}. We define
\[
\delta_i(x) = \max_j\, \frac{|\nabla f(x)_j|}{\gamma'(|x_j|)} - \frac{|\nabla f(x)_i|}{\gamma'(|x_i|)},\quad \delta_{\min}(x) = \min_{i : x_i = 0} \delta_i(x)
\]
and
the quantity $\delta_{\min}(x^*)$ expresses the distance to degeneracy for this solution.  

\begin{corollary}
If $\delta_{\min} > 0$, then for a method $x^{(t)}\to x^*$,
then the  screening rule \eqref{eq:screening-l1} with $\epsilon = 0$
identifies $\supp(x^*)$ after a finite   number of iterations $\bar t$; that is, for all $t \geq  \bar t$, $\mI^{(t)}_0 = \supp(x^*)$. In the convex case ($\gamma(|x_i|) = |x_i|)$, this occurs when $\|\nabla f(x^*)-\nabla f(x)\|_\infty\leq \delta_{\min}/3 $, which occurs at $\bar t = O(1/\delta_{\min}^2)$.
\end{corollary}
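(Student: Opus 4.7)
The plan is to split correctness of the rule into two directions: \emph{safety} ($i \in \mI_0^{(t)} \Rightarrow x_i^* = 0$) and \emph{sureness} ($x_i^* = 0 \Rightarrow i \in \mI_0^{(t)}$). In the convex case, safety follows for free from Theorem~\ref{th:screen-l1} with $\epsilon = 0$, since $D(x) \equiv 0$. Sureness, driven by non-degeneracy, is what supplies the quantitative rate.

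For the sureness direction, I would fix $i$ with $x_i^* = 0$. In the convex case $\gamma'(\cdot) \equiv 1$, so non-degeneracy reads $\|\nabla f(x^*)\|_\infty - |\nabla f(x^*)_i| \geq \delta_{\min}$. Setting $\epsilon := \|\nabla f(x) - \nabla f(x^*)\|_\infty$ and applying the triangle inequality twice,
\[
\|\nabla f(x)\|_\infty - |\nabla f(x)_i| \;\geq\; \delta_{\min} - 2\epsilon.
\]
The convex specialization of Property~\ref{prop:resboundsgrad-l1} ($D(x)=0$) gives $\epsilon \leq \sqrt{L\,\res(x)}$, so $i \in \mI_0^{(t)}$ is implied by $\delta_{\min} - 2\epsilon \geq 2\sqrt{L\,\res(x^{(t)})}$. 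Combining the two estimates---one application of $\epsilon \leq \sqrt{L\,\res(x)}$ and one of the hypothesis $\epsilon \leq \delta_{\min}/3$---yields the stated implication. Theorem~\ref{th:convergence-l1} then provides $\min_{t' \leq t}\res(x^{(t')}) = O(1/t)$; demanding that this minimum be $O(\delta_{\min}^2 / L)$ yields $\bar t = O(1/\delta_{\min}^2)$, matching the stated rate. Combining this with convex safety gives set equality $\mI_0^{(t)} = \{i : x_i^* = 0\}$ for all $t \geq \bar t$.

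For a general (possibly nonconvex) $\gamma$, the same implication chain runs, but with two caveats: (i) one must show $D(x^{(t)}) \to 0$, which follows from continuity of $D$ and $x^{(t)} \to x^*$, so that the additional $D(x)$ terms in Property~\ref{prop:resboundsgrad-l1} vanish asymptotically; and (ii) safety at $\epsilon = 0$ is not guaranteed at intermediate iterates by Theorem~\ref{th:screen-l1}. The main obstacle is (ii): I would argue that once $D(x^{(t)})$ becomes small enough the safety precondition of Theorem~\ref{th:screen-l1} is approximately satisfied at $\epsilon=0$, so any spurious inclusion drops out at later iterations and $\mI_0^{(t)}$ eventually coincides with the true zero set, even if it is not monotone. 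An explicit rate in this case would require quantitative control on the decay $D(x^{(t)}) \to 0$, which the convergence theorem does not directly provide; this is why the corollary only states a finite-time (unquantified) identification outside the convex case.
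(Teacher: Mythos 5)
Your proposal is correct and takes essentially the same route as the paper, which never spells out a separate proof for this corollary but treats it (like Theorem~\ref{th:supportid}) as a direct combination of the safety theorem with $\epsilon=0$ and $D\equiv 0$, the gradient-error bound $\|\nabla f(x)-\nabla f(x^*)\|_\infty\le\sqrt{L\,\res(x)}$ from Property~\ref{prop:resboundsgrad-l1}, the non-degeneracy margin $\delta_{\min}$, and the $O(1/t)$ residual rate of Theorem~\ref{th:convergence-l1}. The only quibble is that your sentence ``combining $\epsilon\le\sqrt{L\,\res(x)}$ with $\epsilon\le\delta_{\min}/3$'' does not by itself close $\delta_{\min}-2\epsilon\ge 2\sqrt{L\,\res(x)}$ --- the operative condition is really $\sqrt{L\,\res(x)}\le\delta_{\min}/c$ for a suitable constant (cf.\ \eqref{eq:suppID:bound}), which your final appeal to the residual rate supplies anyway, so the $O(1/\delta_{\min}^2)$ conclusion stands.
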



\section{P-CGM for general convex sparse optimization}
\label{sec:convex}

We now extend our notion of sparsity to more generalized cases through the use of gauge functions. We begin by considering the convex restriction of \eqref{eq:main0}, which will illuminate many key properties:
\begin{equation}
\minimize{x\in\R^d} \; f(x) + \phi(\kappa_\mP(x)),
\label{eq:main-convex}
\end{equation}
where $\kappa_\mP(x)$ is the \emph{gauge} defined by a set $\mP$ at point $x$. This function generalizes the 1-norm to more size-measuring functions that include norms, semi-norms, and convex cone restrictions. In particular, when $\mP$ is the convex hull of a set of atoms, $\kappa_\mP(x)$ can be used to promote  sparsity with respect to those atoms.

In this section we focus on these convex guages, which have many useful properties for CGM and screening. In section \ref{sec:nonconvex}, we will expand the discussion to include the concave transformation that makes the sparsification more aggressive.

\subsection{Generalized support recovery}
\paragraph{Gauges and support function.} Consider a set of atoms $\mP_0\subset \R^d$. 
The function 
\begin{equation}
\kappa_\mP(x) := \min_{c_p\geq 0}\left\{\sum_{p\in \mP_0} c_p : \sum_{p\in \mP_0} c_pp = x\right\}
\label{eq:gauge}
\end{equation}
is the \emph{gauge function} over $\mP$ the closed convex hull of $\mP_0$ \citep{freund1987dual,chandrasekaran2012convex}. 
A ``dual gauge'' can be constructed as the support function 
$\sigma_\mP(z) := \sup_{s\in \mP_0} z^Ts$.
If $\kappa_\mP$ is a norm, then $\sigma_\mP$ is the usual dual norm. \citep{rockafellar1970convex,borweinlewis} .
At each iteration, a key part of implementing the variants of CGM is in finding quick ways of solving for $\sigma_\mP(z)$. 

\paragraph{Classical gauge definition.}
In convex analysis literature \citep{rockafellar1970convex, borweinlewis}, the gauge function over a closed convex set $\mP$ is defined as 
\begin{equation}
\kappa_\mP(x) := \inf\{\mu \geq 0 : x\in \mu \mP\}
\label{eq:gauge-classical}
\end{equation}
which is equivalent to \eqref{eq:gauge}.
The gauge ``contains" the diameter of the set $\mP$; e.g.
$\diam(\mP)  \kappa_\mP(x) \geq \|x\|_2$ where $\diam(\mP) := \sup_{x\in \mP,y\in \mP}\|x-y\|_2$.

\paragraph{Support recovery.}
Given a solution to \eqref{eq:gauge},  define the
 \emph{decomposition of $x$ with respect to $\mP_0$}  as tuples $c_p,p$, extracted  via the mapping $\coeff_\mP(x,p) = c_p$.
The  \emph{support of $x$ with respect to $\mP_0$} is 
\begin{equation}
\supp_\mP(x) = \{p : \coeff_\mP(x,p) > 0 \text{ in }  \eqref{eq:gauge}\}.
\label{eq:convex-gauge-def}
\end{equation}
For general $\mP$, neither the decomposition nor the support of $x$ is unique. As before, we say that  we say the support recovery is achieved if one such support $\supp_\mP(x^*)$ of the limiting point $x^{(t)}\to x^*\in \mX^*$ is revealed.
The reduction to the support definition in the previous section occurs when $\mP_0 = \{\pm e_1,...,\pm e_d\}$ the signed standard basis, Then $\supp_\mP(x)$ is unique, and explicitly
$\supp_\mP(x) = \{\sign(x_i) e_i : x_i \neq 0\}$.


\begin{property}[Support optimality condition]
\label{prop:support_opt}
Consider  the general convex sparse optimization problem \eqref{eq:main-convex}
where $\phi:\R_+\to\R_+$ is a  monotonically nondecreasing function. Then for any $x^*$ a minimizer of \eqref{eq:main-convex},
\begin{equation}
-\nabla f(x^*)^Tx^* = \kappa_\mP(x^*)\sigma_\mP(-\nabla f(x^*)).
\label{eq:alignment}
\end{equation}
and
\begin{equation}
p\in \supp(x^*) \Rightarrow 
-\nabla f(x^*)^Tp = \sigma_\mP(-\nabla f(x^*)).
\label{eq:supportrecovery}
\end{equation}
\end{property}

\begin{proof}
Without loss of generality, we assume $0\in \mP$, since $\kappa_\mP = \kappa_{\mP \cup \{0\}}$. Denote  $z^* = -\nabla f(x^*)$. 
Then the optimality condition for \eqref{eq:main-convex} is 
\begin{equation}
z^* \in \partial h(x^*)\overset{(\star)}{=} \alpha\partial \kappa_\mP(x^*), \quad h(x) := \phi(\kappa_\mP(x))
\label{eq:opthelp1}
\end{equation}
for some $\alpha \in \partial \phi(\xi)$ at $\xi = \kappa_\mP(x^*)$. 
Here, ($\star$) is a result from \citet{bauschke2011convex}, Corollary 16.72.

Since $\phi$ is monotonically nondecreasing over $\R^+$, $\alpha \geq 0$. 
If $\alpha = 0$, then $\nabla f(x^*) = 0$ and both results are trivially true.  Now consider $\alpha > 0$.
Noting that $\kappa_\mP = \sigma_{\mP^\circ}$ where $\mP^\circ$ is the polar set of $\mP$, 
\[
\alpha^{-1}z^* = \argmax{z\in \mP^\circ}\;(x^*)^Tz 
\iff (z^*)^Tx^* 
= \kappa_{\mP^\circ}(z^*)\sigma_{\mP^\circ}(x^*) =  \kappa_\mP(x^*)\sigma_\mP(z^*)
\]
which proves \eqref{eq:alignment}.
 Now take the conic decomposition $x^* = \sum_{p\in \mP_0} c_p p$ where $c_p \geq 0$, and 
\[
 (x^*)^Tz^*  = \sum_{p\in \mP_0} c_p p^Tz^* \leq \underbrace{\left(\sum_{p\in \mP_0} c_p \right)}_{=\kappa_\mP(x^*)}\underbrace{( p^Tz^*)}_{\leq \sigma_\mP(z^*)},
\]
which is with equality if and only if $p^Tz^*= \sigma_\mP(z^*)$ whenever $c_p > 0$, proving \eqref{eq:supportrecovery}. 
\qed
\end{proof}

Note the close relationship between this property and its analogous version in the previous section \eqref{eq:main-lin-onenorm-opt}. 
As before, the main idea behind this property is that a ``nonzero atom" in $x$ corresponds to a ``maximal atom" in its negative gradient $-\nabla f(x)$. We now generalize the definition of dual support from \eqref{eq:dsupp-l1}:
\[
\dsupp_\mP(x) := \{p : -\nabla f(x)^Tp = \sigma_\mP(-\nabla f(x))\}, 
\]
and Property \ref{prop:support_opt} says that for any $x$, $\supp_\mP(x) \subseteq \dsupp_\mP(x)$.

\paragraph{Distance to degeneracy.}
If for all supports $\supp_\mP(x^*) \neq \dsupp_\mP(x^*)$, then we say \eqref{eq:main-convex} is \emph{degenerate}; specifically, we cannot guarantee finite-time support recovery. Moreover, the distance to degeneracy empirically plays a role in convergence behavior; ``less degenerate" problems often converge faster and identify the support more quickly. Following the notation in \cite{nutini2019active, sun2019we}, we express this distance as $\delta_{\min}(x^*)$, where 
\[
\delta_p(x) = \sigma(-\nabla f(x)) + p^T\nabla f(x), \qquad \delta_{\min}(x) = \min_{p\in \mP}\{\delta_p(x) : p\not\in \supp_\mP(x)\}
\]
for any support of $x$. As before, $\delta_{\min}(x^*) = 0$ means the problem is degenerate.

\paragraph{Linear maximization oracle.}
The extraction of the ``maximal atoms" of a negative gradient is done through the linear maximization oracle (LMO), defined as the vector-to-vector mapping 
$\LMO_\mP(z) := \argmax{s\in \mP} \; s^Tz$.
For compact $\mP$, the set of maximizers is always nonempty and finite-valued; if the set contains more than one element,  $\LMO_\mP(z)$ returns any element in the set.
A key feature of the CGM is that this LMO is often  cheap to compute in practice, and despite weaker convergence guarantees compared to higher order methods, often converges quickly when $x^*$ is sparse with respect to structured $\mP_0$. (See also Table \ref{tab:gauges}.)

\subsection{Examples}
\paragraph{ $\ell_1$ norm.}  Consider the problem  of minimizing $f(x) + \frac{1}{2}\|x\|_1^2$.
In this case, $\sigma_\mP = \|\cdot\|_\infty$ is the dual norm of $\|\cdot\|_1$.
Then, by setting the optimality condition $0\in \partial g(x^*)$ and decomposing by index, at optimality
\[
\begin{cases}
(-\nabla f(x^*))_i = \|x^*\|_1  \,\sign(x^*_i) &\text{ if } x_i^* \neq 0, \\
(-\nabla f(x^*))_i  \in \|x^*\|_1\, [-1,1] &\text{ if } x_i^* = 0.
\end{cases}
\]
In words, the gradient of $f$ along a coordinate for which the optimal variable is nonsmooth with respect to $\kappa_\mP$ is allowed  ``wiggle room''; in contrast, if $g(x)$ is smooth in the direction of $x_i$ then the gradient is fixed. In terms of support recovery,  $\max_i |\nabla f(x^*)_i| = \|x^*\|_1$ and additionally, if ${|\nabla f(x^*)_i|  < \|x^*\|_1}$ then it must be that $x_i^* = 0$.

\paragraph{Weighted $\ell_1$ norm.}  The convex majorant in section \ref{sec:simple} specifically considered 
$\kappa_\mP(x) = \sum_i w_i |x_i|$, for weights $w_i > 0$. Here, $\mP_0 = \{\pm w_1^{-1}e_1,  \cdots \pm w_d^{-1}e_d\}$,
with corresponding ``dual gauge" $\sigma_{\mP}(z) = \max_i \frac{|z_i|}{|w_i|}$,
and the LMO follows exactly the steps for the bounded maximization computation in \eqref{eq:lmo-l1}. 
Note also that the optimality conditions of \eqref{eq:main-convex} for this choice of $\kappa_\mP(x)$ is  
\[
\begin{cases}
 \displaystyle \frac{|\nabla f(x^*)_i|}{|w_i|} \,=\, \max_j \left(\frac{|\nabla f(x^*)_j|}{|w_j|} \right)  \cdot\sign(x^*_i) &\text{ if } x_i^* \neq 0, \\[3ex]
 \displaystyle \frac{|\nabla f(x^*)_i|}{|w_i|}\, \in\, \max_j \left(\frac{|\nabla f(x^*)_j|}{|w_j|} \right)\cdot [-1,1] &\text{ if } x_i^* = 0
\end{cases}
\]
exactly characterizes the optimality conditions for \psimple. 
Later, we will generalize this reweighting technique for general atomic sets $\mP_0$, to construct the convex majorant of the general nonconvex problem \eqref{eq:main0}.

\paragraph{Latent group norm.} For the task of selecting a sparse collection of overlapping subvectors, such as in gene identification, the latent group norm was proposed in \cite{obozinski2011group}. For $x\in \R^d$, given a collection of overlapping groups $\mG = \{\mG_1,...,\mG_K\}$ where $\mG_k \subset \{1,...,d\}$, this norm a gauge function, 
\begin{equation}
\kappa_\mP(x) = \|x\|_\mG := \min_{s_k\in \R^d} \;\left\{ \sum_{k=1}^K \|s_k\|_2 : x = \sum_{k=1}^K s_k, \; (s_k)_i = 0\; \forall i\not\in \mG_k\right\}.
\label{eq:latentgroupnorm}
\end{equation}
In particular, \eqref{eq:latentgroupnorm} is the  solution to \eqref{eq:gauge} when 
\[
\mP_0 = \left\{\frac{1}{\sqrt{|\mG_k|}}e_{\mG_k}, \; k = 1,...,K\right\}, \qquad (e_{\mG_k})_i = \begin{cases}
1, & i\in \mG_k\\
0, & \text{else.}
\end{cases}
\]
Then $\sigma_\mP(z) = \max_{k=1,...,K} \; \|z_{\mG_k}\|_2$.
Now consider \eqref{eq:main-convex} for some smooth $\phi$. Then at optimality, decomposing $x^* = \sum_k s_k^*$, for each group $k$,
\[
\begin{cases}
\|z^*_{\mG_k}\|_2 = \phi'(\kappa_\mP(x^*)), & \text{ if } \|s^*_k\|_2 > 0,\\
\|z^*_{\mG_k}\|_2 \leq \phi'(\kappa_\mP(x^*)),  & \text{ if } \|s^*_k\|_2=0.
\end{cases}
\]
Screening in this case refers to identifying the subvectors where, at optimality, $\|s^*_k\|_2$ might be nonzero; however, just as support identification in the 1-norm case does not imply that the values of $x_i^*$ are known, in a similar vein here it does not imply that the values of $s_k^*$ are known.

\paragraph{Gauges over infinite atomic sets.} A popular application of CGM in low-rank matrix completion leverages the gauge properties of the nuclear norm. 
Here, $\supp_\mP(X)$ may be infinite and are almost always nonunique; for example, if $X = aa^T+bb^T$, then any $uu^T$  where $u$ is in the range of $[a,b]$ is in the support of $X$. 
However, guaranteed finite-time safe screening is not possible for the nuclear norm in our CGM variants, as each left and right singular vector is arbitrarily close to another  candidate pair--unlike in the case of finite atoms $\mP_0$, there is nothing to ``snap to". 


\paragraph{Total variation (TV) norm.} We now investigate a case where $\mP_0$ is not finite, but is a union of a finite set and a recession cone, This occurs in  image processing, where a signal is treated as sparse with respect to edges, via the gauge penalty
\[
\kappa_\mP(x) = \|Dx\|_1, \qquad D = \bmat I & 0 \emat - \bmat 0 & I \emat 
\in \R^{d-1,d}
\]
and is the solution to \eqref{eq:gauge} when $\mP_0 = \{b_1,...,b_{d-1}\} \cup \{c \mb 1: c \in \R\}$
where  for $\mb 1$ the all-ones vector,
\[
b_k = \beta_k - \frac{1}{n}\beta_k^T \mb 1, \qquad \beta_k = (\underbrace{1,1,...1}_{k},\underbrace{0,0,..0}_{n-k})\in \R^n.
\]
The difficulty for CGM is that the support function now has a limited domain; specifically, 
\[
\sigma_\mP(z) = 
\begin{cases}
\|u\|_\infty & \text{ if }  z = D^Tu\\
+\infty & \text{ else}  
\end{cases}
\]
and thus the LMO is not always defined.
Note here that if $z\in \range(D^T)$, then $u = D(D^TD)^{-1} z$ is uniquely determined.
Now suppose at optimality, $-\nabla f(x^*) = D^Tu^*$ for some $u^*$. Then  \eqref{eq:alignment} is satisfied when
\[
-\nabla f(x^*)^Tx^* = (u^*)^TDx^* = \underbrace{\|u^*\|_\infty}_{\sigma_\mP(-\nabla f(x^*))} \underbrace{\|Dx^*\|_1}_{\kappa_\mP(x^*)}
\]
and the conditions are very similar to that in the $\ell_1$ case:
\[
|(D x^*)_i| \neq 0 \Rightarrow |u_i^*| =\|u\|_\infty.
\]
Suppose, however, that $-\nabla f(x^*) \not\in\range(D^T)$. Then $x^*$ cannot be optimal, as there exists a direction that decreases $f(x)$ but has no impact on $\kappa_\mP(x)$. Therefore, while this case is problematic for a generalized analysis (and for the intermediary steps of CGM and its variants), it never occurs at optimality.

\paragraph{Gauges with directions of recessions.}
The \emph{recession cone of $\mP$} \citep{rockafellar1970convex,borweinlewis} is defined as
\[
\rec(\mP) = \{r : cr \in \mP \; \forall c \geq 0\}.
\]
Whenever $\mP$ has a direction of recession, CGM struggles as the LMO can return an infinite atom. 
We offer to isolate optimization over this set separately. In particular, suppose 
\[
\mP_0 = \mP_0' \cup \mK, \qquad \mP_0' \cap \mK = \emptyset,
\]
where $\mP_0'$ is a finite set, and thus defining $\mP$ as the convex hull of $\mP_0'$ ensures that $\mP$ is compact. Then we rewrite \eqref{eq:main-convex} as 
\begin{equation}
\minimize{x\in \cone(\mP), y\in \mK}\; f(x+y) + \phi(\kappa_\mP(x)).
\label{eq:main-convex-recession}
\end{equation}
At each iteration, $x$ takes a conditional gradient step, and $y$ is updated through a full minimization. In the case of the TV norm, this is a  small addition, as $y\in \mK$ is implicitly expressed as $y = c\mb 1$, and optimizing over $c$ is just a one-dimensional convex minimization problem.
Since the portion of the solution in $\mK$ is minimized exactly at each step, from this point on we only consider the support recovery properties for recovering the atoms in $\mP_0'$.

\begin{assumption}[Atomic set conditions]
$\mP_0 =\mP'_0 \cup \mK$ where $\mP'_0$ is a finite set of atoms and $\mK$ is the recession cone; moreover, $\mP'_0\cap \mK = \emptyset$. We denote $\mP = \conv(\mP_0')$.
\label{asspt:atoms}
\end{assumption}
Table \ref{tab:gauges} summarizes these examples and key properties. 
Gauges and support functions for convex sets are fundamental objects in convex analysis, and are discussed more by \citet{rockafellar1970convex,borweinlewis,freund1987dual,friedlander2014gauge}.

\begin{table}
\begin{center}
{\small
\begin{tabular}{|cccc|}
\hline
Gauge $\kappa_\mP(x)$  &  Atoms $\mP_0$  & Support fn $\sigma_\mP(z)$ &  LMO$(z)$ \\\hline&&&\\
1-norm & $\left\{\pm e_1,...,\pm e_d\right\}$&  $\|z\|_\infty$ & $\sign(z_k)e_k$,  \\
$\|x\|_1$  &&& $k = \argmax{k}\;|z_k|$ \\
 &&&\\
Mapped 1-norm & $\left\{\pm p_1,...,\pm p_d\right\}$&  $\|Pz\|_\infty$ &  $\sign(p_k^Tz)p_k$,\\
$\|P^{-1} x\|_1$    &  &   &   $k = \argmax{i}\;|p_i^Tz|$\\
 &&&\\
Group norm & $\bigg\{\frac{1}{\sqrt{|\mG_1|}}e_{\mG_1},...,$
&$\max_k \|z_{\mG_k}\|_2$
& $\frac{1}{\sqrt{|\mG_k|}}e_{\mG_k}$,  \\
$\displaystyle\sum_{i=1}^K\|x_{\mG_i}\|_2$,    &   $\qquad\qquad\frac{1}{\sqrt{|\mG_K|}}e_{\mG_K}\bigg\}$ & &  $k = \argmax{k}\;\|z_{\mG_k}\|_2$ \\
 &&&\\
TV norm & $ \{b_k\}_{k=1}^d\cup \;\{c \mb 1: c \geq 0\}$ & $\|D^\dagger z\|_\infty $ if $z\in \range(D^T)$,  & $b_k$,  \\
$\|Dx\|_1$
 & $\beta_k = (\mb 1_k, \mb 0_{n-k})$    & $+\infty$ else.  & $k = \argmax{i} \; |(D^\dagger z)_i|$\\
 &$b_k = \beta_k - \frac{1}{n}\beta_k^T\mb 1$ &&\\
 &&&\\
\hline
\end{tabular}
}
\end{center}
\caption{Common norms, their atoms, support functions, and their LMOs. In particular, computing each LMO is computationally cheap, especially compared to computing the proximal operator of the gauge,or even the gauge itself.}
\label{tab:gauges}
\end{table}

%

\subsection{Generalized smoothness}



To ensure the uniqueness of $\dsupp_\mP(-\nabla f(x^*))$ and to give a useful gap bound, we again need a notion of smoothness on $f$. 
\begin{definition}
A function $f:\R^d\to\R$ is  $L$-smooth 
 with respect to $\mP$ if for all $x,y \in \R^d$:
\begin{equation}
f(x) - f(y) \leq \nabla f(y)^T(x-y) + \frac{L}{2}\kappa_{\mP}(x-y)^2.
\label{eq:ass:smoothness}
\end{equation} 
\end{definition}
The purpose of this generalized notion is that sometimes, given the data, tighter bounds can be computed~\citep[see, e.g.,][]{nutini2015coordinate}.
It is similar in spirit to the notion of \emph{relative smoothness} \citep{bauschke2017descent,lu2018relatively} which facilitate the analysis of generalized proximal gradient methods, where the 2-norm squared proximity measure is replaced by a Bregman divergence. For CGM, it is  more computationally efficient to consider generalized gauges as the penalty generalization, which we incorporate to the generalized smoothness definition. Additionally, the subadditivity property of gauges assists with bounding the iterates, a crucial step in the convergence proof.

\begin{assumption}[Generalized smoothness]
The convex function $f$ is $L$-smooth w.r.t. $\widetilde\mP := \mP \cup (-\mP)$.
\label{asspt:gensmooth}
\end{assumption}

\paragraph{Example: Quadratic function.} Suppose that 
$f(x) = \frac{1}{2} \|Ax\|_2^2 + b^Tx$.
Then 
\[
L = 
\begin{cases}
L_1 := (\max_i \|A_{:,i}\|_2)^2, & \kappa_\mP = \|\cdot\|_1\\
L_2 := \|A\|_2^2, & \kappa_\mP = \|\cdot\|_2\\
L_\infty := (\sum_i \|A_{:,i}\|_2)^2, & \kappa_\mP = \|\cdot\|_\infty.
\end{cases}
\]
While norm bounds would give  ${d^2L_{1} \geq  dL_{2} \geq  L_{\infty}}$, the actual values in $A$ might lead to tighter inequalities.

\paragraph{Relationship to usual smoothness.} Suppose that $f$ is $L_2$-smooth in the usual sense (with respect to $\|\cdot\|_2$).  Then since $\diam(\mP)\kappa_\mP(x) \geq \|x\|_2$, it follows that $L \leq \diam(\mP) L_2$. In this way, we refine the analysis of CGM by absorbing the usual ``set size'' term into $L$, which in certain cases may be  smaller than $\diam(\mP) L_2$. 

\begin{property}[Uniqueness of gradient] If \eqref{eq:ass:smoothness} holds and $0\in\inte~\mP$, then $\nabla f(x^*)$ is unique at the optimum.
\label{cor:uniquegrad-convex}
\end{property}
See appendix \ref{app:smoothness} for proof.

\subsection{Duality and gap}
For $\phi$ is monotonically nondecreasing, the convex function $h(x) = \phi(\kappa_\mP(x))$
has conjugate $h^*(z) = \phi^*(\sigma_{\mP}(z))$.
Then, rewriting  \eqref{eq:main-convex-recession} gives the primal-dual pair
\[
\pconvex\quad 
\begin{array}[t]{ll}
\mini{x,y,w}& f(w) + \phi^*(\kappa_{\mP}(x)) \\
\st & w = x+y,\; y\in \mK
\end{array}
\qquad\dconvex\quad
\begin{array}[t]{ll}
    \maxi{z} & -f^*(-z) - \phi^*(\sigma_{\mP}(z))\\
\st & z\in \mK^\circ
\end{array}
\]
where $\mK^\circ$ is the polar cone of $\mK$.
Then the duality gap between \pconvex~and \dconvex~can be written as 
\begin{eqnarray*}
\gap(x,y,z) = f(x+y) + h(x) + f^*(-z)-h^*(z) +  \iota_{\kappa^\circ}(z)
\end{eqnarray*}
where $\iota_{\mK^\circ}(z) = +\infty$ if $z$ is not dual-feasible, and 0 otherwise.

\begin{property}[Feasible gradient]
\label{prop:feasgrad}
Take $z := -\nabla_x f(x+y)$. Then $z  = -\nabla_y f(x+y)$. Additionally, if $y = \argmin{y'\in \mK}\; f(x+y')$ then $z\in \mK^\circ$.
\end{property}
\begin{proof}
The first part is true from  chain rule. Then, since
\[
(\mN_\mK(y))^\circ = \underbrace{\mT_\mK(y)}_{\text{tangent cone}} \supseteq \mK = (\mK^\circ)^\circ,
\] 
then from optimality conditions, $z\in \mN_{\mK}(y)\subseteq \mK^\circ$.
\qed
\end{proof}
From Property \ref{prop:feasgrad},  the LMO step acquires $s$ where for $z := -\nabla_x f(x+y)$,
\[
-z^Ts + h(s) = \min_{s'} \; -z^Ts' + h(s) = -h^*(z).
\]
Additionally, by Fenchel-Young's inequality, we know that $f(x) + f^*(\nabla f(x)) = \nabla f(x)^Tx$, and thus
we can  simplify the gap to an online-computable quantity
\[
\gap(x,y,\nabla_x f(x+y)) = -\nabla_x f(x+y)^T(s-x) + h(x) - h(s).
\]

\begin{property}[Gap bounds gradient error]
Given a primal feasible $x$ and  denote the  optimum variable as
\[
 x^* = \argmin{x'}\; \min_{y\in \mK} f(x+y) + h(x).
\]
Furthermore, denote $y = \argmin{y'\in \mK}\; f(x+y')$ and $y^* = \argmin{y'\in \mK}\; f( x^*+y')$.
 Then the duality gap
 bounds the gradient error
\begin{equation}
\gap_\mP(x,y,-\nabla f(x+y)) \geq \frac{1}{2L}\sigma_{\widetilde\mP}(\nabla f(x+y) - \nabla f(\bar x^*+\bar y^*))^2.
\label{eq:gradsuboptbound}
\end{equation}
\label{prop:suboptbound}
\end{property}

\begin{proof}  
Since the  conjugate of $h(x) = \phi(\kappa_\mP(x))$ is $h^*(z) = \phi^*(\sigma_\mP(z))$, then 
\begin{equation}
\phi^*(\sigma_\mP(z)) = \sup_{x}x^Tz -\phi(\kappa_\mP(x)) \geq (x^*)^Tz-\phi(\kappa_\mP(x^*)). 
\label{eq:suboptbound-helper1}
\end{equation}
Then denoting $z =-\nabla f(x+y)$,
\begin{eqnarray*}
\gap_\mP(x,y, z)&=& \underbrace{f(x+y)+ f^*(-z)}_{\text{Fenchel Young}} + \phi(\kappa_{\mP}(x))  + \underbrace{\phi^*(\sigma_{\mP}(-\nabla f(x)))}_{\eqref{eq:suboptbound-helper1}}, \\
&\geq  & (x^*-x)^Tz + \underbrace{y^T\nabla f(x)}_{y\in \mK, \nabla f(x)\in \mK^\circ}  +  \underbrace{\phi(\kappa_{\mP}(x))  - \phi(\kappa_{\mP}(x^*))}_{\text{convexity of $h$}}, \\
&\overset{(\star)}{\geq} & (x-x^*)^T(z^*-z) \geq \frac{1}{2L}\sigma_{\widetilde\mP}(z^*-z)^2
\end{eqnarray*}
where $(\star)$ is from picking $-\nabla f(x^*+y^*)\in \partial h(x^*)$.
\end{proof}

\subsection{Invariance}
One appealing feature of the CGM is that the iteration scheme and analysis can be done in a way that is 
invariant to both linear scaling and translation. However when the gauge function is not used as an indicator, this translation invariance vanishes; in general, $\kappa_\mP(x) \neq \kappa_{\mP+\{b\}}(x+b)$. Therefore the generalized problem formulation \eqref{eq:main-nonconvex} is only linear (not translation) invariant.

\paragraph{Example.} Consider $\kappa_\mP(x) = \|x\|_1$ for $x\in \R^2$. take specifically $x = (-1,-1)$ and $b = (1,1)$. Then $\kappa_\mP(x) = 2$, but $\kappa_{\mP+\{b\}}(x+b) = \kappa_{\mP+\{b\}}(0) = 0\neq 2$.

\paragraph{Invariance.} Define $\mQ = A\mP$, and $f(x) = g(Ax)$. Define $w = Ax$ where $A$ has full column rank. Then, using \eqref{eq:gauge-classical} and chain rule, the following hold
\begin{itemize}[label={$\boldsymbol{\cdot}$}]
\item $f(x) = g(w)$ and $\nabla f(x) = A^T\nabla g(w)$,
    \item 
$\kappa_\mP(x) = \kappa_\mQ(w)$ and $\sigma_{\mP}(-\nabla f(x)) 
=\sigma_{\mQ}(-\nabla g(w))$,



\item $\LMO_{\mQ}(-\nabla g(w)) = A\;\LMO_{\mP}(-\nabla f(x)) $,

\item 
 $f(x) = g(Ax+b)$ is $L$-smooth w.r.t. $\mP$ iff $g$ is $L$-smooth w.r.t. $ \mQ$.
\end{itemize}

\section{RP-CGM for general nonconvex sparse optimization}
\label{sec:nonconvex}

Finally, we consider RP-CGM on the general optimization problem
\begin{equation}
\mini{x}  \;  f(x) + \underbrace{\phi(r_\mP(x))}_{h(x)},
\qquad
r_\mP(x) = \left\{\min_{c_p\geq 0}\; \sum_{p\in \mP_0} \gamma(c_p) : \sum_{p\in \mP_0} c_p p = x\right\}.
\label{eq:main-nonconvex}
\end{equation}
By imposing the concave transformation on $c_p$, we effectively gain the same effect as the nonconvex regularizer on the $\ell_1$ norm in section \ref{sec:simple}. For the most part, much of the analysis will seem very similar to that in section \ref{sec:simple}, especially in the proofs of key concepts, which we therefore put in the appendix to avoid repetitiveness. We also use much of the same assumptions (\ref{asspt:phigamma1}, \ref{asspt:Lsmooth-l1}, \ref{asspt:phi2}) and analyses for the scalar functions $\gamma$ and $\phi$.

\subsection{Support recovery}
As it was for $\kappa_\mP$, the domain of $r_\mP$ is $\cone(\mP)$.
However, the support of $\kappa_\mP(x)$ and $r_\mP(x)$ are often not equivalent.

\paragraph{Example: Different optimal support.}
Consider $\kappa_\mP(x) = \|x\|_1$ and $r_\mP(x) = \frac{1}{\sqrt{2}}\sum_i \sqrt{|x_i|}$.
The constrained optimization problem 
\[
\minimize{x} \quad f(x) := -4x_1-3x_2-4x_3 \qquad
\subjectto \quad \kappa_\mP(x) \leq 1
\]
has optimal solution $x^* = (1/2,0,1/2)$. We verify this from the normal cone condition, where
\[
\nabla f(x^*)^T(x-x^*) \geq  -\underbrace{\|\nabla f(x^*)\|_\infty\|x\|_1}_{\leq 4} + 4 \geq 0.
\]
Note that $r_\mP(x^*) = 1$ as well. However, taking $\bar x = (0, \sqrt{2}, 0)$ also yields $r_\mP(\bar x) = 1$, and has a lower objective value 
\[
f(\bar x) = -3\sqrt{2} \approx -4.24 < -4 = f(x^*).
\]

\paragraph{Example: Different gauge support.} The problem can be made even worse, in that the support of $x$ w.r.t. $r_\mP$ may not even intersect with that w.r.t.  $\kappa_\mP$. Suppose that 
\[
\mP_0 = \left\{\bmat 1\\1\emat, \bmat 0\\3 \emat, \bmat 3\\0 \emat\right\}
\]
and consider $x = (6,6)$. Then, taking  $\gamma(c) = \sqrt{c}$,  we have two options
\[
x = (0,3) + (3,0), \qquad \kappa_\mP(x) = 2, \quad r_\mP(x) = 2\sqrt{2} \approx 2.8,
\]
\[
x =6 \cdot (1,1), \qquad  \kappa_\mP(x) = 6,\quad r_\mP(x) = \sqrt{6} \approx 2.4.
\]
In other words, the support $\supp_\mP(x)$ as defined in \eqref{eq:convex-gauge-def} may not be the support created by the nonconvex gauge $r_\mP(x)$, which is often sparser. More generally, $r_\mP(x)$ does not act merely as a concave transformation on the weights $c_p$ in $\kappa_\mP$, as even the atoms themselves may be selected differently. However, it is worth noting that this scenario does not happen for the $\ell_1$ norm or the TV norm, which have unique and consistent supports across choices of monotonically increasing $\gamma$.


\subsection{Stationary points}
We can rewrite \eqref{eq:main-nonconvex}, as the combined optimization problem over $c_p$, $p\in \mP_0$:
\begin{equation}
\minimize{c_p\geq 0} \quad f\bigg( \sum_{p\in \mP_0} c_p p\bigg) + \phi\bigg(\underbrace{\sum_{p\in \mP_0}\gamma(c_p)}_{=:\xi}\bigg).
\label{eq:main-nonconvex-reform}
\end{equation}
The stationary points of \eqref{eq:main-nonconvex-reform} are $x$ satisfying
\begin{equation}
\forall p\in \mP_0: \;0 \in \nabla f(x)^Tp +  \phi'(\xi)  \, \partial \gamma(c_p), \quad   \text{ at } x= \sum_{p\in \mP} c_p p.
\label{eq:nonconvex-optsupport}
\end{equation}
Our goal is to find a  support of such a stationary point $x^*$.
Given $\gamma$ smooth everywhere except at 0, note the close similarity between this and the support optimality conditions for convex gauges:
\begin{eqnarray*}
c_p > 0 \; &\Rightarrow& \; -p^T\nabla f(x^*) =   \alpha  \;  \gamma'(c_p) \quad \text{(no wiggle room)},\\
c_p = 0 \; &\Rightarrow& \; -p^T\nabla f(x^*) \in  \alpha\;  [-\infty, \gamma_{\max}] \quad \text{(wiggle room exists)}.
\end{eqnarray*}
Here, the wiggle room condition looks asymmetric, but note that if $p$ and $-p$ is in $\mP_0$, then $c_p = c_{-p} = 0$ implies $-p \nabla f(x^*) \in \alpha [-\gamma_{\max},\gamma_{\max}]$, recovering the symmetric condition from secion \ref{sec:simple}.
As before, since $\gamma'$ is a decreasing function, a nonzero coefficient for $x^*$ does not mean a maximal gradient inner product. 


\subsection{RP-CGM}
In the case that $\mP_0$ includes directions of recession, we again treat them separately; given assumption \ref{asspt:atoms}, $\mP_0 = \mP_0' \cup \mK$ and $\mP_0'$ is finite. 
We define the \emph{reweighted atomic set} for a given reference point $x$ as 
\[
\mP_0(x) = \left\{\frac{1}{\gamma'(\coeff_\mP(x,p))}p : p\in \mP'_0\right\},\quad \mP(x) = \conv(\mP_0(x)).
\]
Then
$r_\mP(s;x) = \kappa_{\mP(x)}(s)$, 
with corresponding reweighted support function
\begin{equation}
\sigma_{\mP(x)}(z) = \max_{p\in \mP_0}\; \frac{p^Tz}{\gamma'(\coeff_\mP(x,p))}.
\label{eq:reweighted-support}
\end{equation}
At each iteration, we take a penalized conditional gradient step toward solving the reweighted gauge optimization problem with dual
\begin{eqnarray*}
    \pgeneral 
    &&
    \minimize{x,y\in \mK} \quad f(x+y) + \phi(r_0 + \kappa_{\mP(\bar x)}(x))\\
    \dgeneral
    &&
    \maximize{z\in \mK^\circ} \quad -f^*(-z) - \phi^*(\sigma_{\mP(\bar x)}(z))+r_0\cdot \sigma_{\mP(\bar x)}(z)
\end{eqnarray*}
A description of the most generalized version of the reweighted method is given in Algorithm \ref{alg:general-nonconvex}.

\begin{algorithm}
\caption{RP-CGM on general nonconvex sparse optimization}
\begin{algorithmic}[1]

\Procedure{RP-CGM}{$f$, $\phi$, $\gamma$, $\mP_0 = \mP_0' \cup \mK$,  max iter $T$}       
    \State Initialize with any $x^{(0)}\in \cone(\mP)$ where $\mP$ is the convex hull of $\mP_0'$, $y^{(0)} \in \mK$
    \State 
    \For{$t = 1,...T$}
        \State Compute the projected negative gradient $z = -\nabla f(x^{(t)}+y^{(t)})$
        \State Compute the reweighted atomic set $\mP(x)$
        
        \State Compute next atom $s =\xi p$ where \Comment{Pick next atom}
        \[
        p = \LMO_{\mP(x)}(z), \quad \xi = (\phi^*)'(\sigma_\mP(z))
        \]
        
        \State Update $x^{(t+1)}$ \Comment{Merge}
        \[
        x^{(t+1)} = (1-\theta^{(t)})x^{(t)} + \theta^{(t)} s, \quad \theta^{(t)} = 2/(1+t)
        \]
        
        \State Update $y^{(t+1)}$ using exact minimization \Comment{Recession component}
        \[
        y^{(t+1)} = \argmin{y\in \mK}\; f(x^{(t+1)} + y)
        \]
    \EndFor
\Return  $x^{(T)} + y^{(T)}$
\EndProcedure

\end{algorithmic}
\label{alg:general-nonconvex}
\end{algorithm}

\subsection{Convergence}


\begin{property}[Residual]
Denoting  $\res_\mP(x) = \gap_\mP(x,-\nabla f(x);x)$ the gap at $x$ with reference $x$, then
 \[
 \res_\mP(x)  \geq 0 \; \forall x, \qquad \res_\mP(x) =0 \iff \text{ $x$ is a stationary point of \ref{eq:main-1norm}.}
 \]
 \label{prop:gapres-general}
\end{property}
The proof follows closely that of Property \ref{prop:gapres-l1}; see appendix \ref{app:convergence} for full details.

\begin{lemma}[One step descent]
 Suppose $f$ is $L$-smooth w.r.t. $\mP$ (unweighted). Take 
\[
x^+ = (1-\theta) x + \theta s, \qquad s = \argmin{\tilde s} \; \nabla f(x+y)^T\tilde s +  \bar h(s;x)
\]
for some $\theta \in (0,1)$. Define $y = \argmin{y\in \mK} \; f(x+y)$,\;  $y^+ = \argmin{y\in \mK}\; f(x+y)$.
Then  
\[
f(x^++y^+) + h(x^+) - f(x+y)-h(x) \leq -\theta \res(x) + \frac{L\theta^2}{2} \kappa_\mP(s-x)^2.
\]
\label{lem:onestepdescend1}

\end{lemma}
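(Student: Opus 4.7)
The strategy is to bound the increments of $f$ and $h$ separately and then recognize the linear-plus-majorant part of the bound as $-\theta\res_\mP(x)$ via the Fenchel--conjugate identity satisfied by the LMO iterate $s$.

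\textbf{Step 1 (bounding the $f$-change).} Because $y^+$ minimizes $f(x^++\cdot)$ over $\mK$ and $y\in\mK$, we have $f(x^++y^+)\leq f(x^++y)$. Applying $L$-smoothness of $f$ w.r.t.\ $\mP$ at $(a,b)=(x^++y,\,x+y)$, with $a-b=x^+-x=\theta(s-x)$, yields
\[
f(x^++y^+)-f(x+y)\ \leq\ \theta\,\nabla f(x+y)^{T}(s-x)+\frac{L\theta^{2}}{2}\,\kappa_\mP(s-x)^{2}.
\]

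\textbf{Step 2 (bounding the $h$-change).} The linearization $\bar h(\cdot;x)=\phi(r_{0}+\kappa_{\mP(x)}(\cdot))$ is convex in its first argument (composition of monotone convex $\phi$ with a gauge), majorizes $h$ everywhere, and satisfies $\bar h(x;x)=h(x)$. Applying Jensen to $x^+=(1-\theta)x+\theta s$,
\[
h(x^+)\ \leq\ \bar h(x^+;x)\ \leq\ (1-\theta)\bar h(x;x)+\theta \bar h(s;x)\ =\ (1-\theta)h(x)+\theta \bar h(s;x),
\]
so $h(x^+)-h(x)\leq \theta(\bar h(s;x)-h(x))$.

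\textbf{Step 3 (Fenchel identity).} Adding the two bounds,
\[
f(x^++y^+)+h(x^+)-f(x+y)-h(x)\ \leq\ \theta\Big[\nabla f(x+y)^{T}(s-x)+\bar h(s;x)-h(x)\Big]+\frac{L\theta^{2}}{2}\,\kappa_\mP(s-x)^{2}.
\]
Since $s$ minimizes $\nabla f(x+y)^{T}\tilde s+\bar h(\tilde s;x)$, the conjugate definition gives $\nabla f(x+y)^{T}s+\bar h(s;x)=-\bar h^{*}(-\nabla f(x+y);x)$. Subtracting $\nabla f(x+y)^{T}x+h(x)=\nabla f(x+y)^{T}x+\bar h(x;x)$, the bracketed term becomes
\[
-\bar h^{*}(-\nabla f(x+y);x)-\nabla f(x+y)^{T}x-\bar h(x;x).
\]
By the derivation used for Property~\ref{prop:gapres-general}, Fenchel--Young equality at the gradient ($f(x+y)+f^{*}(\nabla f(x+y))=(x+y)^{T}\nabla f(x+y)$), optimality of $y$ (which gives $y^{T}\nabla f(x+y)=0$ and $-\nabla f(x+y)\in\mK^{\circ}$) collapse $\gap_\mP(x,y,-\nabla f(x+y);x)$ to exactly $x^{T}\nabla f(x+y)+\bar h(x;x)+\bar h^{*}(-\nabla f(x+y);x)$. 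Hence the bracketed quantity equals $-\res_\mP(x)$, yielding the claim.

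\textbf{Main obstacle.} The one nonroutine step is recognizing the bracketed expression as $-\res_\mP(x)$; every other piece is just smoothness plus convexity of the majorant. The identification hinges on the fact that the offset $r_{0}$ and the reweighting of $\mP(x)$ are precisely tuned so that $\bar h^{*}(\cdot;x)=\phi^{*}(\sigma_{\mP(x)}(\cdot))-r_{0}\sigma_{\mP(x)}(\cdot)$ recovers the dual objective of $\dgeneral$ at the reference point, so the conjugate identity furnished by the LMO solution $s$ lines up exactly with the residual.
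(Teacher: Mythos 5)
Your proof is correct and follows essentially the same route as the paper's: the $L$-smoothness bound on the $f$-increment, convexity of the majorant $\bar h(\cdot;x)$ (the paper applies Jensen to $\phi$ plus sublinearity of the reweighted gauge, which is the same thing) for the $h$-increment, and the Fenchel conjugate identity at the LMO minimizer $s$ to identify the bracketed linear-plus-majorant term with $-\res_\mP(x)$. Your use of the exact conic-optimality relation $y^T\nabla f(x+y)=0$ is in fact slightly cleaner than the paper's one-sided inequality.
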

\begin{proof} 
From $L$-smoothness we have 
\begin{eqnarray}
f(x^++y^+)-f(x+y) &\leq& f(x^++y)-f(x+y)\nonumber\\
&\leq&\nabla f(x+y)^T(x^+-x) + \frac{L}{2} \kappa_\mP(x^+-x) \nonumber\\
&=& \theta \nabla f(x+y)^T(s-x) + \frac{L\theta^2}{2}\kappa_\mP(s-x)^2
\label{eq:stephelper1}
\end{eqnarray}
Denote $ \nu =  \sigma_{\widetilde\mP(x)} (-\nabla f(x+y))$.
Since $s = \xi \phi'(\nu)$, then 
\begin{eqnarray}
\nabla f(x+y)^Ts + \phi(r_0 + \bar r_\mP(s;x)) &=& \min_{\tilde s} \underbrace{\nabla f(x+y)^T \tilde s}_{=-\xi\cdot\nu} +  \phi\bigg(r_0 +  \underbrace{\bar r_\mP(s;x)}_{=\xi}\bigg) \nonumber\\
&=& \nu r_0 - \phi^* (\nu).
\label{eq:stephelper2}
\end{eqnarray}
Also,  by definition of residual,
\begin{eqnarray}
\res_\mP(x) &=& f(x+y) + f^*(\nabla f(x+y)) +\phi(r_\mP(x+y)) + \phi^*(\nu) - r_0\cdot \nu\nonumber\\
&= & \underbrace{\nabla f(x+y)^T(x+y)}_{\nabla f(x+y)^Ty\geq 0} +\phi(r(x)) + \phi^*(\nu) - r_0\cdot \nu\nonumber\\
&\geq  & \nabla f(x+y)^Tx +\phi(r(x)) + \phi^*(\nu) - r_0\cdot \nu.
\label{eq:stephelper3}
\end{eqnarray}

Therefore taking $F(x+y) = f(x+y) + \phi(r(x))$ and combining \eqref{eq:stephelper1}, \eqref{eq:stephelper2}, and \eqref{eq:stephelper3},
\begin{eqnarray*}
 F(x^++y^+)- F(x+y) 
&=&-\theta   \res(x) + \theta \left(\phi(r_\mP(x)) - \phi(r_0 + \bar r_\mP(s;x))  \right) \\
&& \qquad + \frac{L\theta^2}{2}\kappa_\mP(s-x)^2 + \phi( r_\mP(x^+)) - \phi( r_\mP(x)) \\
\end{eqnarray*}
Next, by convexity of $\phi$,
\begin{eqnarray*}
 (1-\theta)\phi( r_\mP(x))  +  \theta \phi(r_0+ \bar r_\mP(s;x)) &\geq& 
 \phi(r_\mP(x) +\bar r_\mP(x^+;x)-\bar r_\mP(x;x) )\\
 &\overset{\text{majorant}}{\geq}& \phi(r_\mP(x^+;x))
\end{eqnarray*}
which leaves the desired result.
\end{proof}

\begin{lemma}[Iterate gauge control.]
Suppose additionally $\theta^{(t)} = 2/(t+1)$. Then
\begin{eqnarray*}
 \kappa_\mP(s^{(t)}-x^{(t)}) 
  &\leq& \frac{\gamma_{\max}}{\gamma_{\min}\mu}\bigg(2\sigma_{\widetilde\mP}(\nabla f(x^*+y^*)) +
 \sqrt{2L\Delta^{(t)}} +
  \frac{2}{t(t-1)}\sum_{u=1}^{t-1} \sqrt{2L\Delta^{(u)}}\bigg)  \\
  &&\qquad + 2\nu_0\gamma_{\max}+\kappa_\mP(x^{(0)}) .
\end{eqnarray*}

\label{lem:onestepdescend2}
\end{lemma}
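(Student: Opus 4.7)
The plan is to split $\kappa_\mP(s^{(t)}-x^{(t)}) \leq \kappa_{\widetilde\mP}(s^{(t)}) + \kappa_{\widetilde\mP}(x^{(t)})$ by subadditivity of the gauge on $\widetilde\mP$, and bound the two summands separately. For the first summand, I would write $s^{(t)}=\xi^{(t)} p^{(t)}$ as in the \minmaj~step, where $p^{(t)}\in \mP_0(x^{(t)})$ is an extreme reweighted atom, i.e.\ $p^{(t)} = \bar p/\gamma'(\coeff_\mP(x^{(t)},\bar p))$ for some $\bar p\in \mP_0'$. Since $\gamma'(c)\in [\gamma_{\min},\gamma_{\max}]$, this gives $\kappa_\mP(p^{(t)})\leq 1/\gamma_{\min}$, and more importantly it lets me translate the reweighted dual gauge $\nu^{(t)}:=\sigma_{\mP(x^{(t)})}(-\nabla f(x^{(t)}+y^{(t)}))$ into the unweighted $\sigma_{\widetilde\mP}$ up to a factor of order $\gamma_{\max}/\gamma_{\min}$. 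The scalar $\xi^{(t)}=(\phi^*)'(\nu^{(t)})-r_0$ is then controlled via the quadratic lower bound on $\phi$ (the Property following Assumption~\ref{asspt:phi2}), yielding $\xi^{(t)} \leq \nu^{(t)}/\mu + \nu_0$, so that $\kappa_\mP(s^{(t)})$ is bounded linearly in $\sigma_{\widetilde\mP}(\nabla f(x^{(t)}+y^{(t)}))$ with a constant term of order $\nu_0\gamma_{\max}$.

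Next, I would control $\sigma_{\widetilde\mP}(\nabla f(x^{(t)}+y^{(t)}))$ by the triangle inequality,
\[
\sigma_{\widetilde\mP}(\nabla f(x^{(t)}+y^{(t)})) \leq \sigma_{\widetilde\mP}(\nabla f(x^*+y^*)) + \sigma_{\widetilde\mP}(\nabla f(x^{(t)}+y^{(t)}) - \nabla f(x^*+y^*)),
\]
and handle the second term using the same $L$-smoothness/convexity argument as in Property~\ref{prop:suboptbound}. Specifically, co-coercivity w.r.t.\ $\widetilde\mP$ combined with the stationarity inclusion $-\nabla f(x^*+y^*)\in\partial h(x^*)+\mN_{\mK}(y^*)$ gives $\sigma_{\widetilde\mP}(\nabla f(x^{(t)}+y^{(t)}) - \nabla f(x^*+y^*))^2 \leq 2L\,\Delta^{(t)}$, where $\Delta^{(t)}:=F(x^{(t)}+y^{(t)})-F(x^*+y^*)$. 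This produces a bound of the form $\kappa_\mP(s^{(u)}) \leq \tfrac{\gamma_{\max}}{\gamma_{\min}\mu}(\sigma_{\widetilde\mP}(\nabla f(x^*+y^*))+\sqrt{2L\Delta^{(u)}}) + \nu_0\gamma_{\max}$ for every iterate index $u$.

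For the second summand, I would unroll the \merge~recursion $x^{(u+1)} = (1-\theta^{(u)})x^{(u)} + \theta^{(u)}s^{(u)}$ using subadditivity to obtain
\[
\kappa_\mP(x^{(t)}) \leq \Bigl(\prod_{v=1}^{t-1}(1-\theta^{(v)})\Bigr)\kappa_\mP(x^{(0)}) + \sum_{u=1}^{t-1}\theta^{(u)}\!\!\prod_{v=u+1}^{t-1}\!\!(1-\theta^{(v)})\,\kappa_\mP(s^{(u)}).
\]
With $\theta^{(v)}=2/(v+1)$ the products telescope into $\prod_{v=u+1}^{t-1}\tfrac{v-1}{v+1}=\tfrac{u(u+1)}{(t-1)t}$, so the coefficient of $\kappa_\mP(s^{(u)})$ is exactly $\tfrac{2u}{(t-1)t}$, matching the $\frac{2}{t(t-1)}\sum_{u=1}^{t-1}$ weighting in the stated bound. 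The $\kappa_\mP(x^{(0)})$ term survives with coefficient at most $1$. Substituting the per-iterate bound on $\kappa_\mP(s^{(u)})$ from the previous step, and adding the $t$-th contribution $\kappa_\mP(s^{(t)})$ and a constant $\sigma_{\widetilde\mP}(\nabla f(x^*+y^*))$ term (which together give the leading $2\sigma_{\widetilde\mP}(\nabla f(x^*+y^*))$ in the bound), produces the claimed inequality.

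The main obstacle will be the $\sqrt{2L\Delta^{(t)}}$ step in the nonconvex setting, because the usual co-coercivity argument from Property~\ref{prop:suboptbound} was stated for the convex case. The way around it is to apply the gap--to--gradient inequality to the convex majorant subproblem \pgeneral, which is always convex in $x$ at a fixed reference point, and then use the majorant inequality $\bar F(x;x)\geq F(x)\geq F^*$ to dominate the subproblem gap by $\Delta^{(t)}$. A secondary bookkeeping issue is to ensure that the constant term $2\nu_0\gamma_{\max}$ absorbs both the $\nu_0$ piece from $\kappa_\mP(s^{(t)})$ and the cumulative $\nu_0$ contributions from the weighted average $\sum_u \tfrac{2u}{(t-1)t}\nu_0$, which telescopes since $\sum_{u=1}^{t-1}\tfrac{2u}{(t-1)t}\leq 1$.
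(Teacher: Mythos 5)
Your proposal follows essentially the same route as the paper's proof: split $\kappa_\mP(s^{(t)}-x^{(t)})$ by subadditivity, unroll the \merge{} recursion so the step sizes telescope into the $\tfrac{2u}{t(t-1)}$ weights, bound each $\kappa_\mP(s^{(u)})$ through $(\phi^*)'$-nonexpansiveness (Assumption~\ref{asspt:phi2}) and the $\gamma_{\min},\gamma_{\max}$ reweighting factors, and control $\sigma_{\widetilde\mP}(\nabla f(x^{(u)}+y^{(u)}))$ by the triangle inequality plus the gap/residual-to-gradient-error bound. Your handling of the nonconvex subtlety (routing the co-coercivity step through the convex majorant subproblem) is if anything slightly more explicit than the paper's one-line citation of Property~\ref{prop:res-bound-graderr}, but it is the same argument.
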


\begin{proof}
\begin{eqnarray*}
 \kappa_\mP(s^{(t)}-x^{(t)}) &\overset{\substack{\text{subadditive}\\\text{gauge}}}{\leq} &  \kappa_\mP(s^{(t)})+\kappa_\mP(x^{(t)})\\
&\overset{\text{convexity}}{\leq}& \kappa_\mP(s^{(t)}) +  \theta^{(t-1)} \kappa_\mP(s^{(t-1)} )  +(1-\theta^{(t-1)})\kappa_\mP(x^{(t-1)}) \\
&\overset{\text{recursion}}{\leq}& \kappa_\mP(s^{(t)}) + \kappa_\mP(x^{(0)}) + \sum_{u=1}^{t-1} \theta^{(u)} \underbrace{\prod_{u'=u+1}^{t-1}(1-\theta^{(u')})}_{=\frac{(u+1)u}{t(t-1)}}\kappa_\mP(s^{(u)} )  \\
 &\leq& \kappa_\mP(s^{(t)}) +\kappa_\mP(x^{(0)}) +  \frac{2}{t(t-1)}\sum_{u=1}^{t-1} u \kappa_\mP(s^{(u)}).
 \end{eqnarray*}
In general, for any $x$, $z$,  $\bar x$, 
\[
\kappa_{\mP}(x) \leq \gamma_{\max}\kappa_{\mP(\bar x)}(x), \qquad 
\sigma_{\mP}(z) \geq \frac{1}{\gamma_{\min}}\sigma_{\mP(\bar x)}(z).
\]
Taking $y^{(u)} = \argmin{y\in \mK}\;f(x^{(u)} + y)$, $z^{(u)} = -\nabla f(x^{(u)}+y^{(u)})$,  ${z^* = -\nabla f(x^*+y^*)}$:
\begin{eqnarray*}
\kappa_{\mP(\bar x)}(s^{(u)}) 
=
(\phi^*)'\left( \sigma_{\mP(\bar x)}(z^{(u)}) \right)
&\overset{\text{Asspt \ref{asspt:phi2}}}{\leq} & \mu^{-1} \cdot  \sigma_{\mP(\bar x)}(z^{(u)}) + \nu_0\\
&\overset{\text{Bound on $\gamma'$}}{\leq} & \frac{1}{\mu r_{\min}} \sigma_{\mP}(z^{(u)}) + \nu_0\\
&\overset{\substack{\text{$\Delta$-ineq +}\\\text{ Prop. \ref{prop:res-bound-graderr}}}}{\leq} & \frac{1}{\mu r_{\min}}\left( \sigma_{\widetilde \mP}(z^*)  + \sqrt{2L\Delta^{(u)}}\right) + \nu_0.
\end{eqnarray*}
Putting it all together  gives the desired result.
\qed
\end{proof}
From Lemmas \ref{lem:onestepdescend1} and \ref{lem:onestepdescend2}, we arrive at
\[
\Delta^{(t+1)}-\Delta^{(t)} \leq -\theta^{(t)}   \res_\mP(x^{(t)}) + (\theta^{(t)})^2\left( B\Delta^{(t)}  + B  \bar \Delta^{(t-1)} 
  + A \right) 
\]
for constants 
\[
A = \left(\frac{6L \gamma_{\max}^2}{\mu^2\gamma_{\min}^2 }\sigma_{\widetilde\mP}(-\nabla f(x^*+y^*)+6\gamma_{\max}\nu_0+3\kappa_\mP(x^{(0)}) \right)^2, \qquad B = \frac{3L^2\gamma_{\max}^2}{\mu^{2}\gamma_{\min}^2}
\]
and where $\bar \Delta^{(t)}$ is defined as an averaging over square roots, e.g. 
\[
\sqrt{\bar\Delta^{(t)}} =\frac{2}{t(t+1)} \sum_{u=1}^{t} u \sqrt{\Delta^{(u)}}. 
\]

\begin{thm}[Convergence]
Consider $G$ large enough such that for all $t < 6B$, $\Delta^{(t)} t \leq G$ and $G > 24A$. 
Given assumptions \ref{asspt:phigamma1}, \ref{asspt:phi2}, \ref{asspt:atoms}, \ref{asspt:gensmooth},  with iterates  $x^{(t)}+y^{(t)}$ from algorithm \ref{alg:general-nonconvex}, using   $\theta^{(t)} = 2/(t+1)$, then
\[
\Delta^{(t)}  \leq \frac{G}{t+1} \qquad \text{ and } \qquad
\min_{i\leq t} \res(x^{(i)}) \leq \frac{3G}{2\log(2) (t+1)}.
\]
\label{th:convergence}
\end{thm}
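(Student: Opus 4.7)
The plan is to verify both claims by induction/telescoping off the main one-step recursion
\[
\Delta^{(t+1)} - \Delta^{(t)} \;\leq\; -\theta^{(t)} \res_\mP(x^{(t)}) + (\theta^{(t)})^2\bigl( B\,\Delta^{(t)} + B\,\bar\Delta^{(t-1)} + A\bigr)
\]
already assembled from Lemma~\ref{lem:onestepdescend1} and Lemma~\ref{lem:onestepdescend2}, together with $\theta^{(t)} = 2/(t+1)$ and the choice of $G$.

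\textbf{Part 1: $\Delta^{(t)} \leq G/(t+1)$ by induction on $t$.} For the base case $t \leq 6B$, the hypothesis $\Delta^{(t)} t \leq G$ gives the bound directly. For the inductive step, assume $\Delta^{(u)} \leq G/(u+1)$ for every $u \leq t$ with $t \geq 6B$. Dropping the non-positive term $-\theta^{(t)} \res_\mP(x^{(t)})$ yields
\[
\Delta^{(t+1)} \;\leq\; \Delta^{(t)} + \frac{4}{(t+1)^2}\bigl( B\,\Delta^{(t)} + B\,\bar\Delta^{(t-1)} + A\bigr).
\]
The main computation is to control $\bar\Delta^{(t-1)}$: substituting $\sqrt{\Delta^{(u)}} \leq \sqrt{G/(u+1)}$ into its definition gives
\[
\sqrt{\bar\Delta^{(t-1)}} \;\leq\; \frac{2\sqrt G}{(t-1)t}\sum_{u=1}^{t-1}\frac{u}{\sqrt{u+1}} \;\leq\; \frac{2\sqrt G}{(t-1)t}\cdot\frac{2}{3}\,t^{3/2},
\]
so $\bar\Delta^{(t-1)} \leq C\,G/t$ for an absolute constant $C$. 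Plugging in $\Delta^{(t)} \leq G/(t+1)$ and this bound, the inductive closing inequality becomes
\[
\frac{4BG}{(t+1)^3} + \frac{4CBG}{t(t+1)^2} + \frac{4A}{(t+1)^2} \;\leq\; \frac{G}{(t+1)(t+2)},
\]
which, after multiplying by $(t+1)^2$, reduces to checking
\[
\frac{4B}{t+1} + \frac{4CB(t+1)}{t} + \frac{4A}{G} \;\leq\; \frac{t+1}{t+2}.
\]
Using $t \geq 6B$ to bound the two $B$-terms and $G \geq 24A$ to bound the last term closes the induction (the constants $6$ and $24$ are exactly sized to absorb the numerical factors $4$, $C$, etc.\ that appear).

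\textbf{Part 2: The residual bound.} Sum the recursion over $i = T_0,\dots,t$ with $T_0 := \lceil t/2\rceil$. Telescoping gives
\[
\sum_{i=T_0}^{t} \theta^{(i)} \res_\mP(x^{(i)}) \;\leq\; \Delta^{(T_0)} - \Delta^{(t+1)} + \sum_{i=T_0}^{t}(\theta^{(i)})^2\bigl(B\Delta^{(i)} + B\bar\Delta^{(i-1)} + A\bigr).
\]
From Part~1, $\Delta^{(T_0)} \leq 2G/(t+1)$, and each term in the error sum is $O(1/i^3)$, so the error sum is $O(1/t^2)$, dominated by the leading $2G/(t+1)$. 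On the other side, using the harmonic sum estimate,
\[
\sum_{i=T_0}^{t} \theta^{(i)} \;=\; 2\bigl(H_{t+1} - H_{T_0}\bigr) \;\geq\; 2\log\!\Bigl(\tfrac{t+2}{T_0+1}\Bigr) \;\to\; 2\log 2.
\]
Dividing the two estimates bounds the minimum by the weighted average and yields $\min_{i \leq t}\res_\mP(x^{(i)}) \leq 3G/(2\log(2)(t+1))$ once the lower-order terms are absorbed into the slack in the constant $3/2$.

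\textbf{Main obstacle.} The delicate part is Part~1: the driving term in the recursion does \emph{not} contain a helpful $(1-\theta)\Delta$ factor (as in vanilla CGM), so the $O(1/t)$ rate does not come from a descent-plus-error template but instead from carefully pairing the induction hypothesis with the square-root average $\bar\Delta^{(t-1)}$. Verifying that the constants $6B$ and $24A$ in the hypothesis on $G$ really suffice to close the induction at the boundary $t=6B$ requires tracking the absolute constant $C$ arising from $\sum u/\sqrt{u+1} \leq (2/3)t^{3/2}$, and is the sole place where sharp numerical accounting is needed.
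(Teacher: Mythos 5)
There is a genuine gap in Part~1 that breaks the induction. You propose to close the inductive step after \emph{dropping} the term $-\theta^{(t)}\res_\mP(x^{(t)})$, which leaves a recursion of the form $\Delta^{(t+1)} \leq \Delta^{(t)} + E^{(t)}$ with $E^{(t)} \geq 0$. No such recursion can establish a strictly decreasing bound: if $\Delta^{(t)} \leq G/(t+1)$, you would need $\Delta^{(t)} + E^{(t)} \leq G/(t+2)$, i.e.\ $E^{(t)} \leq G/(t+2) - G/(t+1) = -G/\bigl((t+1)(t+2)\bigr) < 0$, which is impossible. Your stated ``closing inequality'' $E^{(t)} \leq G/\bigl((t+1)(t+2)\bigr)$ has the wrong sign on the right-hand side and does not suffice. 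The missing ingredient — and the linchpin of the paper's argument — is the inequality $\res_\mP(x^{(t)}) \geq \Delta^{(t)}$ (the linearized duality gap dominates the primal suboptimality). The paper keeps the residual term, replaces it by $-\theta^{(t)}\Delta^{(t)}$, uses $t \geq 6B$ to absorb the $B(\theta^{(t)})^2\Delta^{(t)}$ error into one third of this contraction, and is left with a net descent factor $\bigl(1-\tfrac{2}{3}\theta^{(t)}\bigr)$ multiplying $\Delta^{(t)}$; it is exactly this factor (which you assert does not exist, calling its absence ``the main obstacle'') that makes the induction $G/t \to G/(t+1)$ close. Pairing the hypothesis with $\bar\Delta^{(t-1)}$ cannot substitute for it, since $\bar\Delta^{(t-1)}$ only enters the nonnegative error term.

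Part~2 is closer in spirit to the paper (which runs the same second-half averaging as a proof by contradiction via Lemmas~\ref{lem:gaphelper1b} and~\ref{lem:gapbound}), but it contains a secondary slip: the error sum $\sum_{i=\lceil t/2\rceil}^{t}(\theta^{(i)})^2 A$ is $O(A/t)$, not $O(1/t^2)$, since $\sum_{i\approx t/2}^{t} (i+1)^{-2} \approx 1/t$. The final constant still survives only because the hypothesis $G > 24A$ makes this contribution a small fraction of the leading $2G/(t+1)$ term; as written, your accounting does not justify the stated constant $3/(2\log 2)$.
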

Given Lemmas \ref{lem:onestepdescend1} and \ref{lem:onestepdescend2}, the details of the proof closely mirror steps in previous works, and thus we give the explicit details in appendix \ref{app:convergence}.

\paragraph{Comparison with CGM} In \cite{jaggi2013revisiting}, the primal convergence rate for vanilla CGM (with noiseless gradients) is given as $\Delta^{(t)} \leq \frac{2 C_f}{t+2}$  where $C_f$ is a curvature constant that depends on the conditioning of $f$ and the size of $P$. These players appear here in the form of the conditioning of $f$ (quadratic in $L/\mu$), and implicitly $\sigma_{\widetilde\mP}$ (which grows proportionally with $\diam~\mP$). The new players $\nu_0$, $\gamma_{\min}$, and $\gamma_{\max}$ account for the penalty and nonconvex generalizations.


\subsection{Invariance}
Consider $\mQ = A\mP$, $f(x) = g(Ax)$, $w = Ax$, $\bar w = A \bar x$, where $A$ has full column rank. Additionally, assume $x, \bar x \in \cone(\mP)$. Then the following hold.
\begin{itemize}
\item \textbf{Penalty.} $r_\mP(x) = r_\mQ(w)$.
This follows from noting that 
\[
x =  \sum_{p\in \mP} c_p p \iff w =  \sum_{p\in \mP} c_p (Ap) =  \sum_{q\in \mQ} c'_q q
\]
and in fact noting that the coefficients are equal ($c'_q = c_{Ap}$).


\item \textbf{Stationarity.} 
We construct $P$ with columns containing the atoms in $\mP_0'$, and $c$ such that
$x = Pc$, $w = Ax = APc$. 
\[
 P^T\partial r_\mP(x)=\partial r_\mP(c) \overset{r_\mP(c) = r_\mQ(c)}{=} \partial r_\mQ(c) =  P^TA^T\partial r_\mQ(w).
\]
Additionally, for any stationary point $x^*$, if $\nabla f(x^*)\not\in \cone(\mP)$ then there exists a descent direction that is uneffected by the penalty $r_\mP(x)$, and thus it must be that $\nabla f(x^*)\in \cone(\mP)$. By the same token, $A^T\nabla g(w^*)\in \cone(\mP)$. Therefore, the stationary conditions are equivalent: for $x^* = Aw^*$,
\[
0\in \nabla f(x^*) + P^T\partial r_\mP(x^*)  \iff 0\in A^T\nabla g(w^*) + A^T\partial r_{\mQ}(w^*). 
\]


\end{itemize}
Additionally, it can be shown through the chain rule that $A\mP(x) = \mQ(w)$ and $\res_\mP(x) = \res_\mQ(w)$.
Overall, this shows that the steps and analysis of RP-CGM are all invariant to linear transformations on $x$.

\subsection{Screening}

We now describe the  gradient error measured in terms of this ``dual gauge"
$\sigma_{\widetilde\mP}(\nabla f(x)-\nabla f(x^*))$ where the symmetrization $\widetilde\mP := \mP \cup -\mP$ ensures that ${\sigma_{\widetilde\mP}(z-z^*)=\sigma_{\widetilde\mP}(z^*-z)}$, bounding errors in both directions.

\begin{property}[Residual bound on gradient error.]
Denote $D(x) = r_\mP(x)-r_\mP(x^*) + \bar r_\mP(x;x) - \bar r_\mP(x^*;x)$
the linearization error at $x$. 
Denoting $x^*$ a stationary point of \eqref{eq:main-nonconvex} and $y(x) = \argmin{y'\in \mK}\; f(x+y')$, then
\begin{multline*}
\sigma_{\widetilde\mP}(\nabla f(x+y(x))-\nabla f(x^*+y(x^*))) \leq\\ \frac{LD(x)}{2\gamma_{\min}} + \sqrt{\frac{L^2D(x)^2}{4\gamma^2_{\min}} + L\res(x) + LD(x) \frac{\sigma_{\widetilde\mP}(\nabla f(x+y(x)))}{\gamma_{\min}}}.
\end{multline*}
\label{prop:res-bound-graderr}
\end{property}
The linearization error $D(x) = 0$ when the regularizer is convex.
The proof is similar to that for Prop. \ref{prop:resboundsgrad-l1}, and is detailed in appendix \ref{app:convergence}.

\begin{thm}[Dual screening]
For any $x$ and some choice of $\epsilon > 0$,  define the screened set as 
\begin{equation}
\mI_\epsilon(x) = \{p\in \mP_0 : \sigma_{\widetilde\mP}(\nabla f(x)) + p^T\nabla f(x) > \epsilon + 2\sqrt{L\res(x)+\epsilon}\}.
\label{eq:gaprule}
\end{equation}
Then given assumptions \ref{asspt:phigamma1},  \ref{asspt:phi2}, and \ref{asspt:gensmooth},
if 
\[
\epsilon \geq \frac{L D(x)}{\gamma_{\min}} \; \max\left\{\frac{1}{2},\frac{L D(x)}{4\gamma_{\min}} + \sigma_{\widetilde \mP}(\nabla f(x))\right\}
\]
then $p\not\in \supp_\mP(x^*)$,
where $x^*$ is the optimal variable in \eqref{eq:main-convex}.
\label{th:screening2}
\end{thm}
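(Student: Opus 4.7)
The plan is to mirror the two-step pattern used in Theorem \ref{th:screen-l1}, replacing the sign-and-coordinate bookkeeping of the $\ell_1$ case with the support function $\sigma_{\widetilde\mP}$. I will prove the contrapositive: assuming $p\in \supp_\mP(x^*)$, I will show
\[
\sigma_{\widetilde\mP}(\nabla f(x)) + p^T\nabla f(x) \leq \epsilon + 2\sqrt{L\,\res(x)+\epsilon},
\]
so that any $p$ violating this inequality (i.e., any $p\in \mI_\epsilon(x)$) must lie outside $\supp_\mP(x^*)$.

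The first step reduces the screening quantity to the gradient error $\Delta := \sigma_{\widetilde\mP}(\nabla f(x)-\nabla f(x^*))$. Subadditivity of the support function gives $\sigma_{\widetilde\mP}(\nabla f(x)) \leq \sigma_{\widetilde\mP}(\nabla f(x^*)) + \Delta$, and since both $p$ and $-p$ lie in $\widetilde\mP$ we have $|p^T(\nabla f(x)-\nabla f(x^*))|\leq \Delta$. Adding these,
\[
\sigma_{\widetilde\mP}(\nabla f(x)) + p^T\nabla f(x) \;\leq\; \underbrace{\bigl(\sigma_{\widetilde\mP}(\nabla f(x^*)) + p^T\nabla f(x^*)\bigr)}_{(\ast)} \;+\; 2\Delta.
\]
For $p\in \supp_\mP(x^*)$ in the convex case, Property \ref{prop:support_opt} gives $-p^T\nabla f(x^*) = \sigma_\mP(-\nabla f(x^*))$. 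To match this against $\sigma_{\widetilde\mP}$ one uses the exact-minimization update for the recession component $y$ in Algorithm \ref{alg:general-nonconvex}, which forces $-\nabla f(x^*)\in \mK^\circ$ and (together with $\nabla f(x^*)\in \cone(\mP)$ from the stationarity discussion preceding Algorithm \ref{alg:general-nonconvex}) ensures the $\widetilde\mP$-maximum is attained on the $-\mP$ branch. Consequently $(\ast) = 0$, leaving $\sigma_{\widetilde\mP}(\nabla f(x)) + p^T\nabla f(x) \leq 2\Delta$.

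The second step invokes Property \ref{prop:res-bound-graderr} to bound $\Delta$ by $a + \sqrt{a^2 + L\,\res(x) + c}$, where $a = LD(x)/(2\gamma_{\min})$ and $c = LD(x)\sigma_{\widetilde\mP}(\nabla f(x))/\gamma_{\min}$. Doubling and matching against the target form $\epsilon + 2\sqrt{L\,\res(x)+\epsilon}$, it suffices to impose $2a \leq \epsilon$ (so the affine part is absorbed) and $a^2 + c \leq \epsilon$ (so the non-$\res$ portion under the square root is absorbed), since then $2\Delta \leq 2a + 2\sqrt{\epsilon + L\,\res(x)} \leq \epsilon + 2\sqrt{L\,\res(x) + \epsilon}$. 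These two inequalities combine, after substituting for $a$ and $c$, to the stated lower bound on $\epsilon$ in the hypothesis. Taking the contrapositive gives \eqref{eq:gaprule}.

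The main obstacle is step $(\ast)$ rather than the algebra: when $\gamma$ is strictly concave, the stationarity condition \eqref{eq:nonconvex-optsupport} forces $-p^T\nabla f(x^*) = \phi'(\xi)\gamma'(c_p^*)$, which is strictly less than $\sigma_{\widetilde\mP}(-\nabla f(x^*)) = \phi'(\xi)\gamma_{\max}$ whenever $c_p^* > 0$ and $\gamma'(c_p^*)<\gamma_{\max}$. Thus $(\ast)$ need not vanish in the truly nonconvex regime, which is why the rule is safe only in the convex case referenced by the statement (via \eqref{eq:main-convex}) and serves as a heuristic in the nonconvex one. Handling this cleanly in the write-up means isolating the convex-$\gamma$ argument for safety, and noting that as iterates converge the decomposition stabilizes and $D(x)\to 0$, so the $\epsilon$ threshold shrinks and finite-time identification still holds.
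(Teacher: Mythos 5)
Your proposal follows essentially the same route the paper intends for this theorem: it is the generalized-gauge analogue of Theorem \ref{th:screen-l1}, obtained by combining the support-optimality condition of Property \ref{prop:support_opt} (a support atom of $x^*$ must achieve the maximum in $\sigma_\mP(-\nabla f(x^*))$, so the screened quantity vanishes at $x^*$) with the residual-to-gradient-error bound of Property \ref{prop:res-bound-graderr}, and your algebra absorbing $a=LD(x)/(2\gamma_{\min})$ and $c=LD(x)\sigma_{\widetilde\mP}(\nabla f(x))/\gamma_{\min}$ into $\epsilon$ is the intended calculation. Your closing remark about strictly concave $\gamma$ forcing $-p^T\nabla f(x^*)=\phi'(\xi)\gamma'(c_p)<\phi'(\xi)\gamma_{\max}$ correctly identifies why safety is claimed only relative to the convex problem \eqref{eq:main-convex}.

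Two loose ends are worth tightening, both of which are also left implicit in the paper. First, your step $(\ast)=0$ requires $\sigma_{\widetilde\mP}(\nabla f(x^*))=\sigma_\mP(-\nabla f(x^*))$, i.e., that the maximum over $\widetilde\mP=\mP\cup(-\mP)$ is attained on the $-\mP$ branch; this is automatic when $\mP$ is symmetric (as in all of the paper's examples) but for an asymmetric atomic set it needs the extra argument you only sketch, since $\sigma_\mP(\nabla f(x^*))\leq\sigma_\mP(-\nabla f(x^*))$ is equivalent to $-p\in\mP$ for the maximizing atoms and does not follow from stationarity alone. Second, your sufficient conditions are $2a\leq\epsilon$ and $a^2+c\leq\epsilon$, which yield $\epsilon\geq\frac{LD(x)}{\gamma_{\min}}\max\{1,\frac{LD(x)}{4\gamma_{\min}}+\sigma_{\widetilde\mP}(\nabla f(x))\}$, whereas the theorem states $\max\{\frac{1}{2},\cdot\}$; with the stated constant one only gets $2\Delta\leq 2\epsilon+2\sqrt{L\res(x)+\epsilon}$. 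Your version is the one that actually closes the inequality against the threshold $\epsilon+2\sqrt{L\res(x)+\epsilon}$, so you have in effect corrected a factor-of-two slip in the statement rather than introduced an error.
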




In the convex case, $D(x) = 0$, and thus we pick $\epsilon = 0$ in our screening rule. In this scenario, not only does this screening rule achieve finite-iteration support identification, but the finite time $\bar t$ depends directly on $\delta_{\min}$.
\begin{thm}[Support identification of screened P-CGM]
\label{th:supportid}
Given assumptions \ref{asspt:phigamma1}, \ref{asspt:phi2}, \ref{asspt:atoms}, \ref{asspt:gensmooth},  then the screening rule for convex penalties 
\[
\mI^{(0)} = \mP_0, \qquad \mI^{(t)} = \mI^{(t-1)}\setminus \{p \in \mP_0 : p\in \mI_0(x) \mathrm{\; for\;  } x=x^{(t)}\},
\]
is safe and convergent:
\[
 \mI^{(t)}\supseteq \supp_\mP(x^*),\; \forall t, \quad \text{ and } \quad  \mI^{(t)} = \supp_{\mP(x^*)}(x^*), \; t \geq t'
\]
where $t'$ is such that
\begin{equation}
 \sqrt{L\min_{i\leq t'}\res(x^{(i)})} <  \delta_{\min}/3
\label{eq:suppID:bound}
\end{equation}
which happens at a rate $t' = O(1/(\delta_{\min}^2))$.
\end{thm}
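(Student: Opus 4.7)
The proof plan is to split the claim into safety (the inclusion $\mI^{(t)} \supseteq \supp_\mP(x^*)$ for every $t$) and finite-time identification at rate $O(1/\delta_{\min}^2)$. Safety follows as an immediate corollary of Theorem \ref{th:screening2}: in the convex case the linearization error $D(x)$ vanishes identically, so the threshold $\epsilon = 0$ in \eqref{eq:gaprule} is admissible and Theorem \ref{th:screening2} guarantees $p \in \mI_0(x) \Rightarrow p \notin \supp_\mP(x^*)$ for every $x$. Since $\mI^{(t)} = \mP_0 \setminus \bigcup_{s \leq t} \mI_0(x^{(s)})$ is defined by monotone removal, no support atom is ever discarded, so $\supp_\mP(x^*) \subseteq \mI^{(t)}$ for all $t$.

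For identification, the strategy is to show that a single iterate with sufficiently small residual already screens out every non-support atom in one pass. Fix any $p \in \mP_0 \setminus \supp_\mP(x^*)$ and write $e(x) := \sqrt{L\,\res(x)}$. Property \ref{prop:res-bound-graderr} specialized to $D(x) = 0$ collapses to the clean bound $\sigma_{\widetilde\mP}(\nabla f(x+y(x)) - \nabla f(x^*+y^*)) \leq e(x)$. Sublinearity and symmetry of $\sigma_{\widetilde\mP}$, combined with the fact that $\pm p \in \widetilde\mP$ (so that $|p^T v| \leq \sigma_{\widetilde\mP}(v)$), then yield
\[
\sigma_{\widetilde\mP}(\nabla f(x)) + p^T\nabla f(x) \;\geq\; \sigma_{\widetilde\mP}(\nabla f(x^*)) + p^T\nabla f(x^*) - 2e(x).
\]
Because $-\mP \subseteq \widetilde\mP$, the right-hand side is at least $\sigma_\mP(-\nabla f(x^*)) + p^T\nabla f(x^*) - 2e(x) = \delta_p(x^*) - 2e(x) \geq \delta_{\min} - 2e(x)$. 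Consequently, whenever $e(x^{(i)})$ is a sufficiently small universal fraction of $\delta_{\min}$ (matching the constant in \eqref{eq:suppID:bound}), the left-hand side strictly exceeds the screening threshold $2e(x^{(i)})$ simultaneously for every non-support atom, so all such atoms land in $\mI_0(x^{(i)})$ at that iteration. Combined with safety and the monotone shrinkage of $\mI^{(t)}$, this forces $\mI^{(t)} = \supp_\mP(x^*)$ for all $t$ beyond that first good iterate.

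The rate estimate is then a direct substitution: by Theorem \ref{th:convergence} we have $\min_{i \leq t} \res(x^{(i)}) = O(1/t)$, so the smallest $t'$ for which $\min_{i \leq t'} e(x^{(i)}) < \delta_{\min}/3$ is of order $1/\delta_{\min}^2$. The main technical subtlety I expect is the careful bookkeeping that relates the \emph{symmetrized} screening quantity $\sigma_{\widetilde\mP}(\nabla f(x)) + p^T\nabla f(x)$ to the \emph{unsymmetrized} degeneracy gap $\delta_p(x^*)$, which is expressed via $\sigma_\mP(-\nabla f(x^*))$; sign conventions and the containment $-\mP \subseteq \widetilde\mP$ are doing the work, and this is where one must be attentive to get the constant in \eqref{eq:suppID:bound} right. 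Once these inequalities are in hand, the rest is a short algebraic chain and an invocation of the two earlier results.
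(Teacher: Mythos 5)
Your proof is correct and follows the same route as the paper, whose own proof is just the citation of Theorems \ref{th:convergence} and \ref{th:screening2}: you get safety from the $D(x)=0$, $\epsilon=0$ specialization of the screening theorem, and identification by lower-bounding the screened quantity $\sigma_{\widetilde\mP}(\nabla f(x)) + p^T\nabla f(x)$ by $\delta_{\min} - 2e(x)$ via Property \ref{prop:res-bound-graderr} and sublinearity of $\sigma_{\widetilde\mP}$. The one quibble is the constant: your chain requires $\delta_{\min} - 2e(x) > 2e(x)$, i.e.\ $e(x) < \delta_{\min}/4$ rather than the $\delta_{\min}/3$ appearing in \eqref{eq:suppID:bound}, so your assertion that your fraction ``matches'' that constant is not borne out by your own algebra --- though this is immaterial to the $O(1/\delta_{\min}^2)$ rate.
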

\begin{proof}
This is a direct consequence to theorems \ref{th:convergence} and \ref{th:screening2}.
\qed
\end{proof}

Note that Theorem \ref{th:supportid} imposes no conditions on the sequence $\theta^{(k)}$, or choice of $\phi$, $f$, etc.,  except $L$-smoothness of $f$. In other words, for any method where the gap is easily computable and its convergence rate known, then a corresponding screening rule and support identification rate automatically follow.
Additionally, computing $L$ may be challenging, depending on $\kappa_\mP$; as shown previously, at the very least it may require a full pass over the data. However, this is a one-time calculation per dataset, and can be estimated if data are assumed to be drawn from specific distributions (as in sensing applications). 


\section{Experiments}
\label{sec:experiments}

We now compare P-CGM and RP-CGM on the sensing problem of recovering an element-wise sparse variable (Figure \ref{fig:sensing_smallm}). More extensive numerical results are given in appendix \ref{app:experiments}.

We solve a least squares problem
\begin{equation}
\minimize{x} \qquad \tfrac{1}{2m}\|Ax-b\|_2^2 + \phi(r_\mP(x)).
\end{equation}
where $A\in \R^{m\times n}$ as $A_{ij}\sim\mN(0,1/n)$ i.i.d. for $i = 1,...,m$, $j = 1,...,n$, and for a given $x_0$ and $\eta$, generate $b_i \sim \mN(\sum_j A_{ij} (x_0)_j, \eta)$ i.i.d.  
It is clear that given the same $\lambda$, RP-CGM is more aggressive, even with mild choices of $\theta$ and $\xi$. However,  better overall sensing (higher F1 score) requires hyperparameter tuning. 

\begin{figure}
    \centering
    \includegraphics[width=2in]{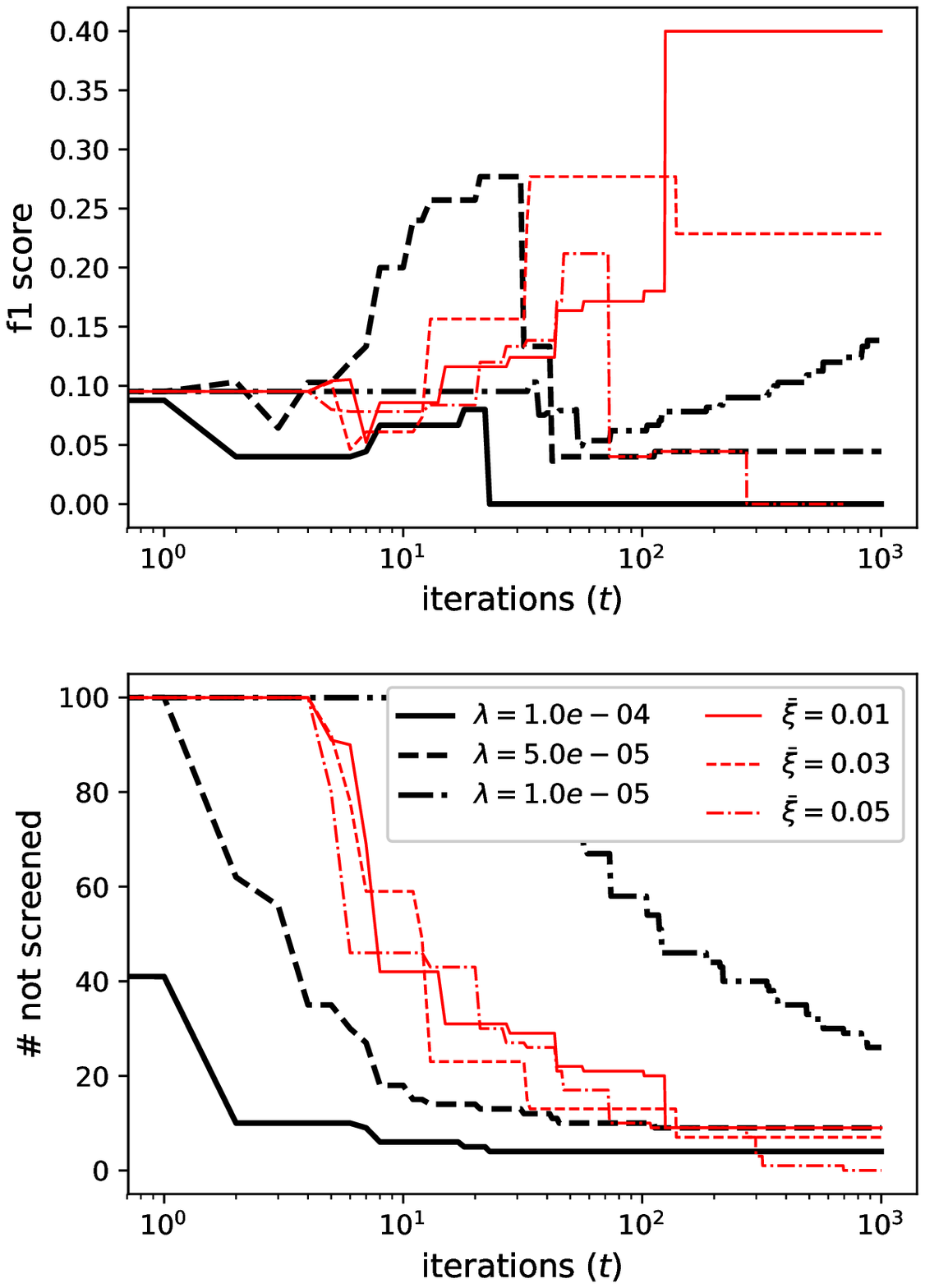}
    \includegraphics[width=2in]{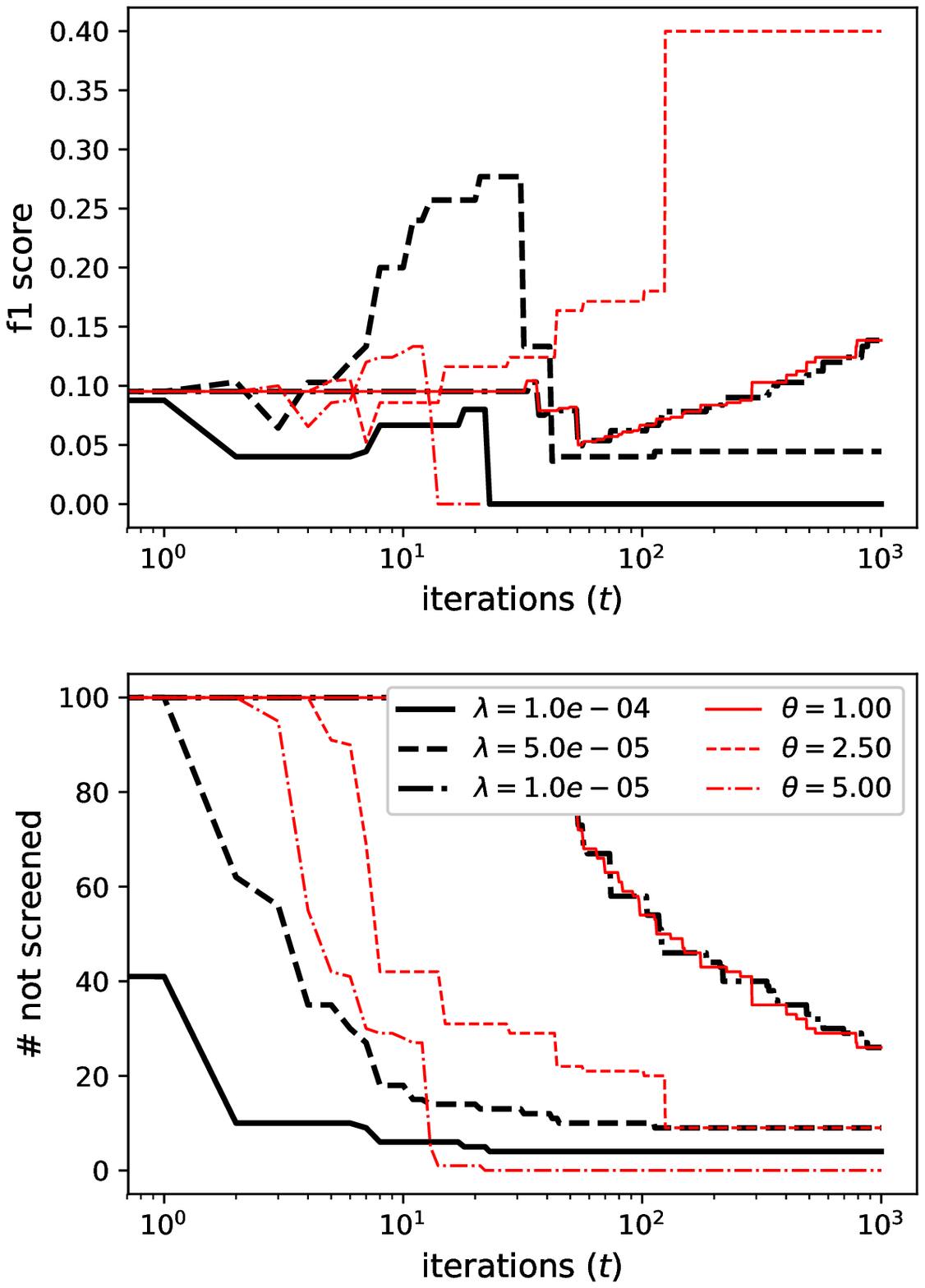}
    \caption{Sensing ($m = 100$, $\eta = 100$). Default values are $\lambda = 0.00001$, $\theta = 2.5$, $\bar \xi = 0.01$. 
    We take  $n = 100$, and $x_0$ has 5 nonzeros  with values i.i.d. $\mN(0,1)$.  Here, $\phi(\xi) = \tfrac{1}{2}\xi^2$, $\gamma$ is as described in \eqref{eq:defgamma:exp}, and $\kappa_\mP(x) = \|x\|_1$.  Black curves show P-CGM with varying $\lambda$. Red curves show RP-CGM with fixed $\lambda$ at a small value, and sweep $\theta$ and $\bar \xi$.}
    \label{fig:sensing_smallm}
\end{figure}

\section{Conclusion}
\label{sec:conclusion}
This work considers two variations of the conditional gradient method (CGM): the P-CGM, which  accommodates gauge-based penalties in place of constraints, and the RP-CGM, which allows concave transformations of the gauges. 
The gauges may be induced by compact sets, but  also accomodate ``simple" directions of recession. 
We give a convergence rate to a stationary point, and propose a  gradient screening rule and support recovery guarantee. 
Compared with proximal methods, these CGM-based methods
often have a much cheaper  per-iteration cost; e.g. in the group norm,  computing the LMO (without reweighting) is trivial compared to even computing the gauge function itself. Additionally, the almost-for-free computation of the gap and residual quantity makes  screening a very small computational addition. 

The key challenge in showing the convergence of these methods is controlling the size of each $s^{(t)}$. This was trivial in the CGM case when $s^{(t)}$ was constrained in a compact set; when transformed to a penalty, we require a minimum amount of curvature of $\phi$ at $\xi\to+\infty$, and we restrict $\gamma$ to only having strict concavity over a finite support. However, as shown in the numerical results, these restrictions do not greatly inhibit the sparsifying effects of the penalty functions.


Finally, we do not incorporate away step  \citep{guelat1986some, lacostejulienjaggi}. In implementation, they are somewhat orthogonal to the extensions provided in this work, and can be added somewhat automatically; the analysis is a subject for future work.

\appendix

\section{Smoothness equivalences}
\label{app:smoothness}

\begin{lemma}[Smoothness equivalences]
Suppose that  $f$ is $L$-smooth with respect to $\mP$.
Then the following also holds:
\begin{enumerate}
\item Expansiveness
\begin{eqnarray}
(\nabla f(x)-\nabla f(y))^T(x-y)&\geq&   \frac{1}{2L}(\sigma_\mP(\nabla f(x)-\nabla f(y))^2\nonumber\\
&&\qquad + \sigma_\mP(\nabla f(y)-\nabla f(x))^2),
\label{eq:expansiveness}
\end{eqnarray}
\item Strongly convex conjugate
\begin{eqnarray}
f(y)-f(x) &\geq& \nabla f(x)^T(y-x) +  \frac{1}{2L}\sigma_\mP(\nabla f(y)-\nabla f(x))^2.
\label{eq:stronglyconvex}
\end{eqnarray}
\end{enumerate}
\label{lem:smoothness}
\end{lemma}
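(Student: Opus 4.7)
The plan is to establish the strongly convex conjugate bound \eqref{eq:stronglyconvex} first, then derive the expansiveness inequality \eqref{eq:expansiveness} as a direct consequence by symmetrizing. The standard trick for passing from a smoothness upper bound to a lower bound on $f(y)-f(x)$ is to introduce the auxiliary function
\begin{equation*}
\phi_x(z) := f(z) - \nabla f(x)^T z,
\end{equation*}
which is convex and satisfies $\nabla \phi_x(x) = 0$, so $x$ is a global minimizer of $\phi_x$. Applying the $L$-smoothness assumption \eqref{eq:ass:smoothness} to $f$ (and noting that $\phi_x$ differs from $f$ only by a linear term, so inherits the same smoothness modulus) yields, for every $z$ and the reference point $y$,
\begin{equation*}
\phi_x(z) \leq \phi_x(y) + (\nabla f(y) - \nabla f(x))^T(z - y) + \frac{L}{2}\kappa_\mP(z-y)^2.
\end{equation*}
Minimizing both sides over $z$ (the left side becomes $\phi_x(x)$) reduces the problem to evaluating an infimal convolution of the form $\min_w g^T w + \tfrac{L}{2}\kappa_\mP(w)^2$ with $g = \nabla f(y) - \nabla f(x)$.

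The key computation I would carry out next is this one-dimensional minimization. Parametrizing $w = \lambda p$ with $p \in \mP$ and $\lambda \geq 0$, I can use $\inf_{p\in\mP} g^T p = -\sigma_\mP(-g)$ and then optimize over $\lambda$, obtaining
\begin{equation*}
\min_w\, g^T w + \tfrac{L}{2}\kappa_\mP(w)^2 = -\tfrac{1}{2L}\sigma_\mP(-g)^2,
\end{equation*}
attained at $\lambda^\star = \sigma_\mP(-g)/L$. (Equivalently, this is $-f^*(-g)$ where $f^* = \tfrac{1}{2L}\sigma_\mP(\cdot)^2$ is the conjugate of $\tfrac{L}{2}\kappa_\mP(\cdot)^2$.) Plugging this back, expanding $\phi_x(x)$ and $\phi_x(y)$, and rearranging the linear terms yields exactly
\begin{equation*}
f(y)-f(x) \geq \nabla f(x)^T(y-x) + \tfrac{1}{2L}\sigma_\mP(\nabla f(x)-\nabla f(y))^2,
\end{equation*}
which (under the symmetrization $\widetilde\mP = \mP \cup (-\mP)$ used in Assumption \ref{asspt:gensmooth}, so that $\sigma_\mP(g) = \sigma_\mP(-g)$) is \eqref{eq:stronglyconvex}.

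For \eqref{eq:expansiveness}, I would simply write the bound just proven with the roles of $x$ and $y$ exchanged and add the two inequalities. The linear terms combine into $-(\nabla f(x) - \nabla f(y))^T(x-y)$, and the two quadratic residuals on the right side become the symmetric pair $\sigma_\mP(\nabla f(x)-\nabla f(y))^2 + \sigma_\mP(\nabla f(y)-\nabla f(x))^2$, yielding \eqref{eq:expansiveness} after rearrangement. The only genuinely non-routine step is the conjugate computation for $\tfrac{L}{2}\kappa_\mP(\cdot)^2$; the subtlety is that $\kappa_\mP$ is a gauge rather than a norm and need not be symmetric, so one must be careful to track whether $\sigma_\mP(g)$ or $\sigma_\mP(-g)$ appears, and to invoke the symmetrized gauge (Assumption \ref{asspt:gensmooth}) to put both sides in the stated form. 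Everything else is standard convex-analytic manipulation.
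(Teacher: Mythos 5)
Your proposal is correct and follows essentially the same route as the paper: both introduce the linearly tilted auxiliary function (your $\phi_x(z)=f(z)-\nabla f(x)^Tz$ versus the paper's $g(z)=f(z)-z^T\nabla f(y)$, the same construction with the anchor point relabeled), minimize the quadratic smoothness upper bound over the displacement using the gauge--support-function duality to produce the $-\tfrac{1}{2L}\sigma_\mP(\cdot)^2$ term, and obtain \eqref{eq:expansiveness} by adding the two one-sided bounds. Your explicit attention to whether $\sigma_\mP(g)$ or $\sigma_\mP(-g)$ appears for an asymmetric gauge is a welcome refinement over the paper's own write-up, but the underlying argument is the same.
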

Since the proof is very similar to those presented in \cite{nesterovlectures}, we include them in appendix \ref{app:smoothness}.

\begin{proof}
The proof largely follows from \cite{nesterovlectures}, mildly adapted.
\begin{itemize}
\item First prove \eqref{eq:ass:smoothness} $\Rightarrow$  \eqref{eq:expansiveness}.
Construct $g(x) = f(x) - x^T\nabla f(y)$, which is convex, also $L$-smooth, and has minimum at $x = y$.
Then, for any $w$, 
\[
g(y) \leq g(x+w) \overset{(a)}{\leq} g(x) + \nabla g(x)^Tw + \frac{L}{2}\kappa_\mP(w)^2,
\]
where (a) is since $g$ is $L$ smooth and convex.

Now pick  
\[
w \in \frac{1}{L}\sigma_\mP(-\nabla g(x))\partial \sigma_\mP(-\nabla g(x)),
\]
which implies
\begin{eqnarray*}
\frac{L}{\sigma_\mP(-\nabla g(x))}  w & \in& \argmax{\kappa_\mP(u)\leq 1}\;\langle u, -\nabla g(x)\rangle = \partial \sigma_\mP(-\nabla g(x)),
\end{eqnarray*}
and thus
\[
\kappa_\mP(w) = \frac{\sigma_\mP(-\nabla g(x))}{L},
\]
and
\[
\langle w, -\nabla g(x)\rangle = \frac{1}{L}\sigma_\mP(-\nabla g(x))^2.
\]
Then
\[
\frac{L}{2}\kappa_\mP(w)^2 = \frac{1}{2L} \sigma_\mP(-\nabla g(x))^2,
\]
and
plugging in the construction for $g$ gives
\begin{eqnarray*}
g(y) -g(x)&\leq&  \underbrace{\nabla g(x)^Tw + \frac{L}{2}\kappa_\mP(w)^2}_{-\frac{1}{2L}\sigma_\mP(-\nabla g(x))^2}\\
\iff f(y) -f(x) &\leq&  (y-x)^T\nabla f(y)  - \frac{1}{2L} \sigma_\mP(\nabla f(y)-\nabla f(x))^2.
\end{eqnarray*}
Applying the last inequality twice gives
\begin{eqnarray*}
(y-x)^T(\nabla f(y)-\nabla f(x))\leq  \frac{1}{2L} ((\sigma_\mP(\nabla f(x)-\nabla f(y))^2  +(\sigma_\mP(\nabla f(y)-\nabla f(x))^2).
\end{eqnarray*}

\item Now prove \eqref{eq:ass:smoothness} $\Rightarrow$  \eqref{eq:stronglyconvex}.
Using the same $g$ as before, consider
\[
\min_z\; g(x)+\langle \nabla g(x), z-x\rangle + \frac{L}{2}\kappa_\mP(x-z)^2
=\min_w\; \langle \nabla g(x), w\rangle + \frac{L}{2}\kappa_\mP(w)^2.
\]
Using optimality conditions, picking $w = z-y$, we have
\[
0\in \nabla g(x) + L \kappa_\mP(w)\partial \kappa_\mP(w)\iff -\frac{1}{L\kappa_\mP(w)}\nabla g(x) = \argmax{\sigma_\mP(u)\leq 1}\langle u,w\rangle ,
\]
which implies
\[
\sigma_\mP(-\nabla g(x)) = L\kappa_\mP(w), \qquad  -\frac{1}{L\kappa_\mP(w)}\langle w,\nabla g(x)\rangle = \kappa_\mP(w).
\]
so
\[
\langle w, -\nabla g(x)\rangle = L\kappa_\mP(w)^2 = \frac{1}{L}\sigma_\mP(-\nabla g(x))^2,
\]
and overall
\[
g(y) \geq \min_z\; g(x)+\langle \nabla g(x), z-x\rangle + \frac{L}{2}\kappa_\mP(x-z)^2 = g(x) - \frac{1}{2L}\sigma_\mP(-\nabla g(x))^2.
\]
Plugging in $f$ gives
\[
f(y)-f(x) \geq (y-x)^T\nabla f(y) - \frac{1}{2L}\sigma_\mP(\nabla f(y)-\nabla f(x))^2.
\]
\end{itemize}
\end{proof}

\begin{customprop}{\ref{cor:uniquegrad-convex}}[Uniqueness of gradient] If \eqref{eq:ass:smoothness} holds and $0\in\inte~\mP$, then $\nabla f(x)$ is unique at the optimum.
\end{customprop}

\begin{proof}
Assume that $f(x)=f(x^*)$ for some $x\neq x^*$, $x$ feasible. Then by optimality conditions, \\
${\nabla f(x^*)^T(x^*-x)\leq 0}$, and thus
\[
\underbrace{f(x)-f(x^*)}_{=0} \geq \underbrace{\nabla f(x^*)^T(x-x^*)}_{\geq 0} + \frac{1}{2L}\sigma_\mP(\nabla f(x)-\nabla f(x^*))^2,
\]
which implies that $\sigma_\mP(\nabla f(x)-\nabla f(x^*)) = 0$. Since $0\in \inte~\mP$, this can only happen if $\nabla f(x) = \nabla f(x^*)$.
\qed
\end{proof}

\section{Convergence results from section \ref{sec:nonconvex}}
\label{app:convergence}

\begin{customprop}{\ref{prop:gapres-general}}[Residual]
Denoting  $\res_\mP(x) = \gap_\mP(x,-\nabla f(x);x)$ the gap at $x$ with reference $x$, then
 \[
 \res_\mP(x)  \geq 0 \; \forall x, \qquad \res_\mP(x) =0 \iff \text{ $x$ is a stationary point of \ref{eq:main-1norm}.}
 \]
\end{customprop}

\begin{proof} 
Denote $y = \argmin{y}\;f(x+y)$, and $z = -\nabla f(x+y)$, and plug in $\kappa_{\mP(x)}(x)=\bar r_\mP(x;x)$.
Then
\begin{eqnarray*}
\res_\mP(x)&=& f(x+y) + f^*(-z) +  \phi(r_\mP(x)) +  \phi^*(\sigma_{\mP(x)}(z))+ (\kappa_{\mP(x)}(x)- r_\mP(x))\cdot \sigma_{\mP(x)}(z)\\
&\overset{(a)}{=}& x^T\nabla f(x+y) + \phi( r_\mP(x)) +  \phi^*(\sigma_{\mP(x)}(z))+ (\kappa_{\mP(x)}(x)- r_\mP(x))\cdot \sigma_{\mP(x)}(z)\\
&\overset{(b)}{\geq}& x^T\nabla f(x+y) + \underbrace{y^T\nabla f(x+y)}_{\geq 0} + r_\mP(x)\sigma_{\mP(x)}(z) + (\kappa_{\mP(x)}(x)- r_\mP(x))\cdot \sigma_{\mP(x)}(z)\\
&\overset{(c)}{\geq} & x^T\nabla f(x+y) +  \kappa_{\mP(x)}(x)\cdot \sigma_{\mP(x)}(z) \\
&\overset{(d)}{\geq} & x^T\nabla f(x+y) -x^T\nabla f(x+y) = 0
\end{eqnarray*}
where
\begin{itemize}
    \item[(a)] uses the Fenchel-Young inequality on $f$ and $f^*$, 
    \item[(b)] uses the Fenchel-Young inequality on $\phi$ and $\phi^*$,
    \item[(c)] follows since $-\nabla f(x+y)\in\mK^\circ$ and $y\in \mK$, and thus $y^Tz \geq 0$, and
    \item[(d)] follows from the definition of $\sigma_{\mP(x)}$.
\end{itemize}
Tightness of (b) occurs iff Fenchel-Young is satisfied with equality, e.g. 
\begin{equation}
\sigma_{\mP(x)}(z) = \phi'(r_\mP(x)) 
\label{eq:optcond-helper1-app}
\end{equation}
Tightness of (c) occurs iff 
\begin{equation}
\sigma_{\mP(x)}(z)  = \frac{-\nabla f(x)^Tp}{\gamma'(\coeff_\mP(x;p))}, \quad \forall p, \; c_p \neq 0.
\label{eq:optcond-helper2-app}
\end{equation}
The ``element-wise" optimality conditions for \eqref{eq:main-nonconvex} are, for all $p\in \mP_0$,
\begin{eqnarray*}
\frac{-\nabla f(x)^Tp}{\gamma'(\coeff_\mP(x;p))} = \phi'(r_\mP(x)) \gamma'(c_p)  &\text{ if } & c_p \neq 0\\
\frac{-\nabla f(x)^Tp}{\gamma'(\coeff_\mP(x;p))}\leq \phi'(r_\mP(x)) \gamma'(c_p)&  \text{ if }&  c_p  = 0\\
\end{eqnarray*}
which is true iff \eqref{eq:optcond-helper1-app}, \eqref{eq:optcond-helper2-app}  hold.
\end{proof}

\begin{property}[Linearized objective value bound]
\label{prop:objvalbound}
Given assumptions \ref{asspt:phigamma1}, \ref{asspt:phi2},   \ref{asspt:atoms}, \ref{asspt:gensmooth}, then the objective error of each linearized problem decreases as
\[
\Delta^{(t)}  = O(1/t).
\]
\end{property}
\begin{proof}
Define 
\[
A = \left(\frac{6L \gamma_{\max}^2}{\mu^2\gamma_{\min}^2 }\sigma_{\widetilde\mP}(-\nabla f(x^*+y^*)+6\gamma_{\max}\nu_0+3\kappa_\mP(x^{(0)}) \right)^2, \qquad B = \frac{3L^2\gamma_{\max}^2}{\mu^{2}\gamma_{\min}^2}.
\]
Then putting together lemmas \ref{lem:onestepdescend1}, \ref{lem:onestepdescend2} and using the relation $(a+b)^2 \leq 2a^2 + 2b^2$ gives
\[
\Delta^{(t+1)}-\Delta^{(t)} \leq -\theta^{(t)}   \res_\mP(x^{(t)}) + (\theta^{(t)})^2\left( B\Delta^{(t)}  + B  \bar \Delta^{(t-1)} 
  + A \right) .
\]
where $\bar \Delta^{(t)}$ is defined as an averaging over square roots, e.g. 
\[
\sqrt{\bar\Delta^{(t)}} =\frac{2}{t(t+1)} \sum_{u=1}^{t} u \sqrt{\Delta^{(u)}}. 
\]
Then picking $\bar t > 6B$, we get that for all $t \geq \bar t$, $B(\theta^{(t)})^2 \leq \theta^{(t)}/3$, and therefore 
\begin{eqnarray*}
\Delta^{(t+1)}-\Delta^{(t)} &\leq& -\theta^{(t)}  \underbrace{ \res_\mP(x^{(t)})}_{\geq \Delta^{(t)}} + (\theta^{(t)})^2\left( B\Delta^{(t)}  + B  \bar \Delta^{(t-1)} 
  + A \right) \\
  &\leq& -\theta^{(t)} \Delta^{(t)}+ (\theta^{(t)})^2\left( B\Delta^{(t)}  + B\bar \Delta^{(t-1)} 
  + A \right) \\
  &\leq& -\frac{2\theta^{(t)} \Delta^{(t)}}{3} + (\theta^{(t)})^2\left( B  \bar \Delta^{(t-1)}
  + A \right) .
\end{eqnarray*}

We now pick $G$ large enough such that for all $t \leq \bar t$, $\Delta^{(t)} \leq G/t$, and $G > 24A$. Since $\Delta^{(t)}$ is always a bounded quantity ($x^{(t)}$ is always feasible), this is always possible. Then, for all $t < \bar t$, 
\[
\sqrt{\bar \Delta^{(t)}} \leq \frac{\sqrt{G}}{t(t+1)}\sum_{t'=1}^t \sqrt{t'}\overset{(a)}{\leq} \frac{2\sqrt{G}}{3t(t+1)}t^{3/2},
\]
where (a) is by  integral rule, and so
\[
\bar \Delta^{(t)} \leq  \frac{4Gt}{9(t+1)^2} \leq \frac{G}{2t}.
\]
Now we make an inductive step. Suppose that for some $t$, $\Delta^{(t')} < G/t'$ for all $t' \leq t$.  Then 
\begin{eqnarray*}
\Delta^{(t+1)} &\leq& \Delta^{(t)}-\frac{2}{3}\theta^{(t)}\Delta^{(t)} + (\theta^{(t)})^2(A+B\bar\Delta^{(t)})\\
&\leq & \frac{G}{t} - \frac{2}{3} \frac{2G}{t+1}\frac{1}{t} + \frac{4}{(t+1)^2}\left(A + \frac{GB}{2t}\right)\\
&= & \frac{G}{t+1}\left(\frac{t+1}{t} - \frac{4}{3t} + \frac{4A}{(t+1)G} + \frac{2B}{t(t+1)} \right)\\
&\leq & \frac{G}{t+1}\left(1 - \frac{1}{3t}  + \frac{4A}{tG} + \frac{2B}{t^2} \right)\\
&=& \frac{G}{t+1}\left(1 + \frac{1}{t} \left(-\frac{1}{3} + \underbrace{ \frac{4A}{G} }_{<1/6}+ \underbrace{\frac{2B}{t}}_{< 1/6}\right) \right) \leq \frac{G}{t+1},
\end{eqnarray*}
which satisfies the inductive step.
\end{proof}

The following is a generalized and modified version of a proof segment from \cite{jaggi2013revisiting}, which will be used for proving $O(1/t)$ gap convergence.
\begin{lemma}
\label{lem:gaphelper1b}
Pick some $0 < T_2 < T_1$ and pick  
\[
\bar k = \ceil{D(k+D)/(D+T_1)}-D \;\Rightarrow\;  \frac{D}{D+T_1} \leq \frac{\bar k+D}{k+D} \leq \frac{D}{D+T_2}.
\]
Then if
\[
 \frac{C_1(D+T_1)}{D} \leq C_3\cdot \log\left(\frac{D+T_2}{D}\right),
 \]
  then for all $k > T_1$,
\[
 \left(\frac{C_1}{D+\bar k} + \sum_{i=\bar k}^{k} \frac{C_2}{(D+i)^2}-\frac{C_3}{D+i}\cdot \frac{1}{D+k}\right)  < 0.
\]
\end{lemma}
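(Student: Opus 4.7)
The plan is to bound each of the three parts of the displayed expression separately, using the two constraints on $\bar k$ together with standard integral--sum comparisons, and then combine them via the hypothesis. Since $t\mapsto 1/(D+t)$ is monotonically decreasing, a left-endpoint Riemann comparison gives
$$\sum_{i=\bar k}^{k}\frac{1}{D+i} \;\geq\; \int_{\bar k}^{k+1}\frac{dt}{D+t} \;=\; \log\!\frac{D+k+1}{D+\bar k} \;\geq\; \log\!\frac{D+k}{D+\bar k} \;\geq\; \log\!\frac{D+T_2}{D},$$
where the last step uses the upper constraint $\frac{D+\bar k}{D+k}\leq \frac{D}{D+T_2}$. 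Consequently the negative contribution in the target expression is at least $\frac{C_3}{D+k}\log\!\frac{D+T_2}{D}$.

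Next I would upper-bound the two positive contributions. The lower constraint $\frac{D+\bar k}{D+k}\geq \frac{D}{D+T_1}$ yields $\frac{1}{D+\bar k}\leq \frac{D+T_1}{D(D+k)}$, so $\frac{C_1}{D+\bar k}\leq \frac{C_1(D+T_1)}{D(D+k)}$. For the quadratic tail I would use the telescoping integral bound $\sum_{i=\bar k}^{k}\frac{1}{(D+i)^2}\leq \frac{1}{D+\bar k-1}-\frac{1}{D+k}$; multiplying through by $D+k$ gives $\frac{D+k}{D+\bar k-1}-1$, which is uniformly bounded by $T_1/D + o(1)$ as $k\to\infty$ since $\bar k$ scales linearly with $k$ at rate $D/(D+T_1)$. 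Assembling these estimates and multiplying the target inequality by $D+k$, the claim becomes
$$\frac{C_1(D+T_1)}{D} \;+\; C_2\!\left(\frac{T_1}{D}+o(1)\right) \;<\; C_3\log\!\frac{D+T_2}{D},$$
whose first left-hand term is exactly the left-hand side of the hypothesis.

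The main obstacle I anticipate is handling the $C_2$ tail, since the hypothesis bounds only $C_1$ against $C_3$. The cleanest resolution is to observe that the uniform $O(1)$ bound above on $(D+k)\sum_{i=\bar k}^{k}(D+i)^{-2}$ must be absorbed into a strict slack in the hypothesis (for instance by interpreting the displayed inequality in the statement as strict, or by folding a constant multiple of $C_2$ into the $C_1$ that appears in the hypothesis before invoking the lemma). Once this slack is secured, the remaining verification is routine: all three contributions scale like $1/(D+k)$ with the correct signs, and monotonicity of the resulting difference in $k$ delivers the strict inequality for every $k>T_1$ rather than only for $k$ sufficiently large.
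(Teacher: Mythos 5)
Your proposal follows essentially the same route as the paper's proof: lower-bound $\sum_{i=\bar k}^{k}(D+i)^{-1}$ by $\log\frac{D+k}{D+\bar k}\geq \log\frac{D+T_2}{D}$ using the upper constraint on $\bar k$, upper-bound $C_1/(D+\bar k)$ by $C_1(D+T_1)/(D(D+k))$ using the lower constraint, and control the $C_2$ tail by an integral comparison. The one point where you hesitate is precisely the point where the paper's argument goes wrong, and your instinct is correct. The paper asserts
\[
\sum_{i=\bar k}^{k} \frac{1}{(D+i)^2}\;\leq\; \frac{1}{D-1+k}-\frac{1}{D-1+\bar k},
\]
whose right-hand side is negative while the left-hand side is positive (the endpoint contributions of the integral are written in reversed order); the resulting $C_2$ term is then labelled ``$<0$'' and discarded. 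With the correct orientation, $\frac{1}{D-1+\bar k}-\frac{1}{D-1+k}$, the $C_2$ contribution is positive and, exactly as you compute, of order $C_2T_1/(D(D+k))$ --- the same $1/(D+k)$ scale as the $C_1$ and $C_3$ terms --- so it cannot be dropped. The hypothesis $\frac{C_1(D+T_1)}{D}\leq C_3\log\frac{D+T_2}{D}$ therefore does not suffice as stated: one needs the strengthened (and strict) form you propose, e.g.\ replacing $C_1$ by $C_1$ plus a suitable constant multiple of $C_2$ before invoking the lemma. This correction propagates to the subsequent gap-bound lemma, whose condition on $G_4$ likewise omits the $C_2=\alpha G_1G_2+G_3G_2^2$ contribution.

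Your only remaining loose end is that the $o(1)$ in your bound on $(D+k)\sum_{i=\bar k}^{k}(D+i)^{-2}$ should be made uniform over all $k>T_1$, not just asymptotic. This is routine: since $D+\bar k\geq D(k+D)/(D+T_1)$, one gets $\frac{D+k}{D+\bar k-1}\leq \frac{(D+T_1)(D+k)}{D(D+k)-(D+T_1)}$, which is bounded by an explicit constant for all $k>T_1$ (assuming $D>1$, or after adjusting the integral comparison's endpoints). With that constant folded into the strengthened hypothesis, your argument closes the lemma correctly.
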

\begin{proof}
Using integral rule, we see that
\[
\sum_{i=\bar k}^{k} \frac{1}{(D+i)^2}\leq  \int_{z=\bar k -1}^{k-1} \frac{1}{(D+i)^2} = \frac{1}{D-1+ k}-\frac{1}{D-1+\bar k}
\]

\[
\sum_{i=\bar k}^{k} \frac{1}{D+i}\geq  \int_{z=\bar k }^{k} \frac{1}{D+i} = \log(D+k)-\log(D+\bar k).
\]
This yields 
\begin{eqnarray*}
c(k) &:=&  \frac{C_1}{D+\bar k} + \sum_{i=\bar k}^{k} \frac{C_2}{(D+i)^2}-\frac{1}{D+i}\cdot \frac{C_3}{D+k}\\
&\leq &  \frac{C_1}{D+\bar k} + \frac{C_2}{D-1+ k}-\frac{C_2}{D-1+\bar k} + \frac{C_3}{D+k}\cdot (\log(D+\bar k) - \log(D+k))\\
&\leq &  \frac{C_1(D+T_1)}{D(D+k)} + \underbrace{\frac{C_2}{D-1+ k}-\frac{C_2}{D-1+\bar k}}_{<0} + \frac{C_3}{D+k}\cdot \log\left(\frac{D}{D+T_2}\right)\\
&\leq &  \frac{C_1(D+T_1)}{D(D+k)} + \frac{C_3}{D+k}\cdot \log\left(\frac{D}{D+T_2}\right)< 0.
\end{eqnarray*}
\end{proof}

\begin{lemma}
[Generalized non-monotonic gap bound]
\label{lem:gapbound}
Given 
\begin{itemize}
\item 
$\Delta^{(t)}  \leq \frac{G_1}{t+D}$ for some $G_1$, 
\item $\theta^{(t)} = \frac{G_2}{t+D}$ for some $G_2$ and $D$, and 
\item $\Delta^{(t+1)} - \Delta^{(t)}(1+\alpha \theta^{(t)}) \leq -\theta^{(t)}\res(x^{(t)}) + (\theta^{(t)})^2 G_3$ for some $G_3$,
\end{itemize}
then for 
\[
G_4 \geq \frac{G_1}{G_2}  \frac{(D+2)}{D( \log\left(\frac{D+1}{D}\right) )} ,
\]
we have
\[
\min_{i\leq t}\res(x^{(i)}) \leq \frac{G_4}{t+D}.
\]
\end{lemma}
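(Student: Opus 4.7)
My plan is to argue by contradiction. Fix $t > 2$ and suppose $\res(x^{(i)}) > G_4/(t+D)$ for every $i \leq t$; I will show this forces $\Delta^{(t+1)} < 0$, contradicting the non-negativity of the objective gap. The key engine is Lemma~\ref{lem:gaphelper1b}, which I would invoke with $T_1 = 2$ and $T_2 = 1$, chosen so that its integrability condition $C_1(D+T_1)/D \leq C_3 \log((D+T_2)/D)$ matches exactly the lower bound imposed on $G_4$ in the present statement.

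\textbf{Key calculation.} In the baseline case where the recursion reads $\Delta^{(i+1)} - \Delta^{(i)} \leq -\theta^{(i)}\res(x^{(i)}) + (\theta^{(i)})^2 G_3$ (i.e.\ $\alpha = 0$), iterating from an intermediate index $\bar k$ up to $t$ telescopes to
\[
0 \leq \Delta^{(t+1)} \leq \Delta^{(\bar k)} + \sum_{i=\bar k}^{t}\bigl((\theta^{(i)})^2 G_3 - \theta^{(i)}\res(x^{(i)})\bigr).
\]
Substitute $\Delta^{(\bar k)} \leq G_1/(\bar k+D)$, $\theta^{(i)} = G_2/(i+D)$, and the contradiction hypothesis $\res(x^{(i)}) > G_4/(t+D)$; the right-hand side becomes exactly the quantity $c(t)$ in Lemma~\ref{lem:gaphelper1b} with $C_1 = G_1$, $C_2 = G_2^2 G_3$, and $C_3 = G_2 G_4$. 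Choosing $\bar k = \ceil{D(t+D)/(D+T_1)}-D$, as prescribed by that lemma, guarantees $D/(D+T_1) \leq (\bar k + D)/(t+D) \leq D/(D+T_2)$, and the lemma's hypothesis reduces to $G_4 \geq G_1(D+2)/(G_2 D\,\log((D+1)/D))$, which is the assumed lower bound on $G_4$. Hence the lemma yields $\Delta^{(t+1)} < 0$, the desired contradiction.

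\textbf{Handling the multiplicative factor.} When $\alpha \neq 0$, unrolling the recursion introduces the product $\Pi_i := \prod_{j=i+1}^{t}(1+\alpha\theta^{(j)})$, so the telescoped bound becomes
\[
\Delta^{(t+1)} \leq \Delta^{(\bar k)}\,\Pi_{\bar k - 1} + \sum_{i=\bar k}^{t}\bigl[(\theta^{(i)})^2 G_3 - \theta^{(i)}\res(x^{(i)})\bigr]\Pi_i.
\]
Using $\log(1+\alpha\theta^{(j)}) \leq \alpha G_2/(j+D)$ gives $\Pi_i \leq \bigl((t+D)/(i+D)\bigr)^{\alpha G_2}$, a uniformly bounded factor over $i \in [\bar k,t]$ since $(\bar k+D)/(t+D)$ is bounded away from zero by construction. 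I expect this bookkeeping to be the main obstacle: the extra power modifies the integrands feeding into Lemma~\ref{lem:gaphelper1b}, but the $1/(i+D)$ and $1/(i+D)^2$ sums remain controlled by the same integral estimates, so the helper lemma can be rerun with inflated constants $\tilde C_1, \tilde C_2, \tilde C_3$ (and, if needed, a mildly larger $T_1$) to reach the same contradiction. Beyond the a priori rate $\Delta^{(t)} \leq G_1/(t+D)$ already in the hypothesis, no new ingredient is required.
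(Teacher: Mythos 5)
Your contradiction setup and your choice of $T_1 = 2$, $T_2 = 1$ in Lemma~\ref{lem:gaphelper1b} match the paper exactly, and your $\alpha = 0$ computation (with $C_1 = G_1$, $C_2 = G_2^2 G_3$, $C_3 = G_2 G_4$) is essentially the paper's argument. The divergence --- and the gap --- is in how you treat the term $\alpha\theta^{(i)}\Delta^{(i)}$. The paper never unrolls the recursion multiplicatively: it telescopes the additive form $\Delta^{(i+1)} - \Delta^{(i)} \leq \alpha\theta^{(i)}\Delta^{(i)} - \theta^{(i)}\res(x^{(i)}) + G_3(\theta^{(i)})^2$ and then substitutes the first hypothesis $\Delta^{(i)} \leq G_1/(i+D)$ into the $\alpha$ term, giving $\alpha\theta^{(i)}\Delta^{(i)} \leq \alpha G_1 G_2/(i+D)^2$. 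This folds the $\alpha$ term into the $1/(i+D)^2$ sum, i.e.\ into $C_2 = \alpha G_1 G_2 + G_3 G_2^2$ --- and in Lemma~\ref{lem:gaphelper1b} the $C_2$ contribution is $\frac{C_2}{D-1+k} - \frac{C_2}{D-1+\bar k} < 0$, so it never enters the condition on $G_4$. That is why the stated threshold $G_4 \geq \frac{G_1}{G_2}\frac{D+2}{D\log((D+1)/D)}$ is independent of $\alpha$ and $G_3$.

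Your multiplicative unrolling, by contrast, is both incomplete and unable to recover that constant. The factor $\Pi_{\bar k - 1} \leq \bigl((t+D)/(\bar k+D)\bigr)^{\alpha G_2} \leq \bigl((D+2)/D\bigr)^{\alpha G_2}$ multiplies the leading term $\Delta^{(\bar k)}$, so your effective $C_1$ becomes $G_1\bigl((D+2)/D\bigr)^{\alpha G_2} > G_1$, and the helper lemma's hypothesis $C_1(D+T_1)/D \leq C_3\log((D+T_2)/D)$ then forces a strictly larger lower bound on $G_4$ than the one in the statement. You flag this as ``bookkeeping \dots with inflated constants,'' but inflating $C_1$ changes the conclusion, not merely the proof. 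The fix is to drop the unrolling entirely and apply the a priori rate $\Delta^{(i)} \leq G_1/(i+D)$ --- which you already list as available --- pointwise to the $\alpha\theta^{(i)}\Delta^{(i)}$ term before summing; after that, your $\alpha=0$ calculation goes through unchanged with $C_2 = \alpha G_1 G_2 + G_3 G_2^2$.
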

\begin{proof}  
 We have
\[
\Delta^{(t+1)}-\Delta^{(t)}\leq \alpha \theta^{(t)}\Delta^{(t)} - \theta^{(t)}\gap^{(t)} + G_3(\theta^{(t)})^2.
\] 
Now assume that for all $i \leq t$, $\gap^{(i)} > \frac{G_4}{t+D}$. Then, telescoping from $\bar t$ to $t$ gives 
\begin{eqnarray*}
\Delta^{(t+1)}  &\leq& \Delta^{(\bar t)} + \sum_{i=\bar t}^t \left( \alpha \theta^{(i)}\Delta^{(i)} - \theta^{(i)}\gap^{(i)} + G_3(\theta^{(i)})^2\right)\\
& < & \frac{G_1}{\bar t + D} + \sum_{i=\bar t}^t \left( \alpha \frac{G_1G_2}{(i+D)^2}- \frac{G_2}{i+D}\frac{G_4}{t+D} + \frac{G_3 G_2^2}{(i+D)^2}\right).
\end{eqnarray*}

Picking $C_1 = G_1$, $C_2 =\alpha G_1G_2+G_3G_2^2$, $C_3 = G_2G_4$, and invoking Lemma \ref{lem:gaphelper1b}, this yields that $\Delta^{(t+1)} < 0$, which is impossible. Therefore, the assumption must not be true. 
\end{proof}

Piecing everything in this section together gives theorem \ref{th:convergence}



\section{Screening proofs from section \ref{sec:nonconvex}}
\label{app:secnonconvex_proofs}

\begin{customprop}{\ref{prop:res-bound-graderr}}[Residual bound on gradient error]
Denote $D(x) = r_\mP(x)-r_\mP(x^*) + \bar r_\mP(x;x) - \bar r_\mP(x^*;x)$
the linearization error at $x$. 
Denoting $x^*$ a stationary point of \eqref{eq:main-nonconvex} and $y(x) = \argmin{y'\in \mK}\; f(x+y')$, then
\begin{multline*}
\sigma_{\widetilde\mP}(\nabla f(x+y(x))-\nabla f(x^*+y(x^*))) \leq\\ \frac{LD(x)}{2\gamma_{\min}} + \sqrt{\frac{L^2D(x)^2}{4\gamma^2_{\min}} + L\res(x) + LD(x) \frac{\sigma_{\widetilde\mP}(\nabla f(x+y(x)))}{\gamma_{\min}}}.
\end{multline*}
\end{customprop}

\begin{proof}
First, note that 
\begin{eqnarray}
\phi^*(\sigma_{\mP(x)}(z)) + r_0\cdot (\sigma_{\mP(x)}(z))  &=&  \sup_y\; y^Tz - \phi(r_0 + \kappa_{\mP(x)}(y)) \nonumber\\
&\geq &   z^Tx^* - \phi(r_0 + \kappa_{\mP(x)}(x^*)). \label{eq:phistar-ell1-proofhelper1b}
\end{eqnarray}
Define $\res(x) = (\bar F(x;x)-\bar F_D(-\nabla f(x);x)$.
Taking $(x,-\nabla f(x))$ as a feasible primal-dual pair and reference point $\bar x=x$, and denoting $\epsilon(x) = \phi(r_0+\kappa_{\mP(x)}(x^*)) - \phi(x^*)$, $z = -\nabla f(x+y(x))$, and $z^* = -\nabla f(x^* + y(x^*))$, then
\begin{eqnarray*}
\res(x) &=&\underbrace{f(x) + f^*(z)}_{\text{use Fenchel-Young}}+\phi(r(x)) \\
&&\qquad + \underbrace{\phi^*(\sigma_{\mP(x)}(z) -r_0 \cdot (\sigma_{\mP(x)}(z)) }_{\text{use \eqref{eq:phistar-ell1-proofhelper1b}}} \\
&\geq & -z^T(x- x^*) + \phi(r_\mP(x))   - \phi\left(r_0 + \kappa_{\mP(x)}(x^*)\right)\\
&\overset{+\epsilon(x)-\epsilon(x)}{\geq}  & -z^T(x- x^*) + \underbrace{\phi(r_\mP(x)) -\phi(r_\mP(x^*))}_{\text{convex in $x$}}- \epsilon(x)\\
&\overset{g\in \partial h(x^*)}{\geq} & -z^T(x- x^*) + g^T(x- x^*) - \epsilon(x).
\end{eqnarray*}
Picking in particular $g = -\nabla f( x^* + y(x^*))$,
\[
\res(x) +\epsilon(x) \geq  (x- x^*)^T(z^*-z) 
\overset{(\star)}{\geq} \frac{1}{L}\sigma_{\widetilde\mP}(z-z^*)^2
\]
where $(\star)$ follows from assumption \ref{asspt:gensmooth}.


Next, note that
\begin{eqnarray*}
\epsilon(x) &=& \phi(r_\mP(x)-r_\mP(x;x)+r_\mP(x^*;x))-\phi(r_\mP(x^*))\\
&\overset{\text{convex $\phi$}}{\leq} & \phi'(r_\mP(x^*))\underbrace{(r_\mP(x)-r_\mP(x;x)+r_\mP(x^*;x)-r_\mP(x^*))}_{=: D(x)}
\end{eqnarray*}
where in general,  $D(x) \leq (\gamma_{\max}-\gamma_{\min})\kappa_{\widetilde\mP}(x-x^*)$ and $D(x) = 0$ if $\gamma(\xi) = \xi$ (convex case).
Noting that, at optimality, 
\[
\phi'(r_\mP(x^*)) = \sigma_{\mP(x^*)}(z^*) \leq \frac{\sigma_{\widetilde\mP}(z^*)}{\gamma_{\min}},
\]
then
\[
\gamma_{\min}\phi'(r(x^*)) \leq \sigma_{\widetilde\mP}(z^*) \leq  \sigma_{\widetilde\mP}(z) + \sigma_{\widetilde\mP}(z-z^*) 
\]
and overall,
\begin{eqnarray*}
\sigma_{\widetilde\mP}(z^*-z)^2 &\leq& L \res(x) + L\epsilon(x) \\
&\leq& L \res(x) + LD(x)\frac{  \sigma_{\widetilde\mP}(z) + \sigma_{\widetilde\mP}(z^*-z) }{\gamma_{\min}}.
\end{eqnarray*}
This inequality is quadratic in $\sigma_{\widetilde\mP}(z^*-z)$, which leads to the bound 
\[
\sigma_{\widetilde\mP}(z^*-z)\leq \frac{LD(x)}{2\gamma_{\min}} + \sqrt{\frac{L^2D(x)^2}{4\gamma^2_{\min}} + L\res(x) + LD(x) \frac{\sigma_{\widetilde\mP}(z)}{\gamma_{\min}}}.
\]
\qed
\end{proof}

\section{Extra experiments}
\label{app:experiments}

\subsection{Least squares synthetic experiment}
We generate the problems using the same parameters as given in section \ref{sec:experiments}.

\paragraph{Convergence.} Figures \ref{fig:convergence_convex} and \ref{fig:convergence_nonconvex} show the convergence and screening behavior of P-CGM and RP-CGM for varying parameters. We take $m = n = 100$ and generate $(x_0)_i\sim \mN(0,1)$ i.i.d. We take $\phi(\xi) = p^{-1} \xi^p$ and $\kappa_\mP(x) = \|x\|_1$.  In the nonconvex case, we pick $\gamma$ as the piecewise smooth function:
\begin{equation}
\gamma_{\mathrm{LSP}}(\xi) = \log\left(1+\frac{\xi}{\theta}\right), \qquad \gamma(\xi) = 
\begin{cases}
 \gamma_{\mathrm{LSP}}(\xi) & \text{ if }\xi \leq \bar \xi\\
(\xi - \bar \xi) \gamma_{\mathrm{LSP}}'(\bar \xi) +  \gamma_{\mathrm{LSP}}(\bar \xi)& \text{ else. }\\
\end{cases}
\label{eq:defgamma:exp}
\end{equation}

\begin{figure}
    \centering
    \includegraphics[width=2.25in]{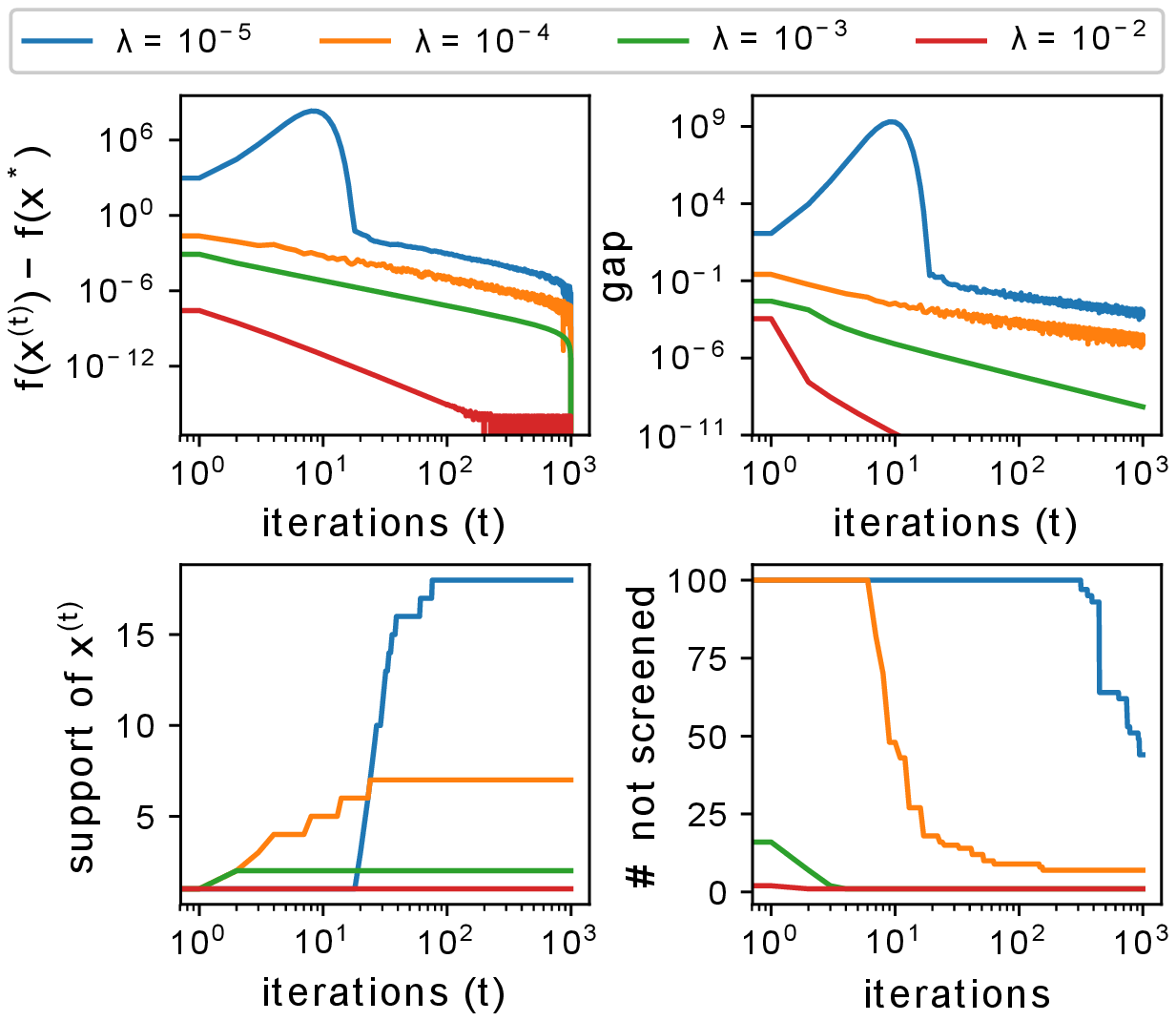}
    \includegraphics[width=2.25in]{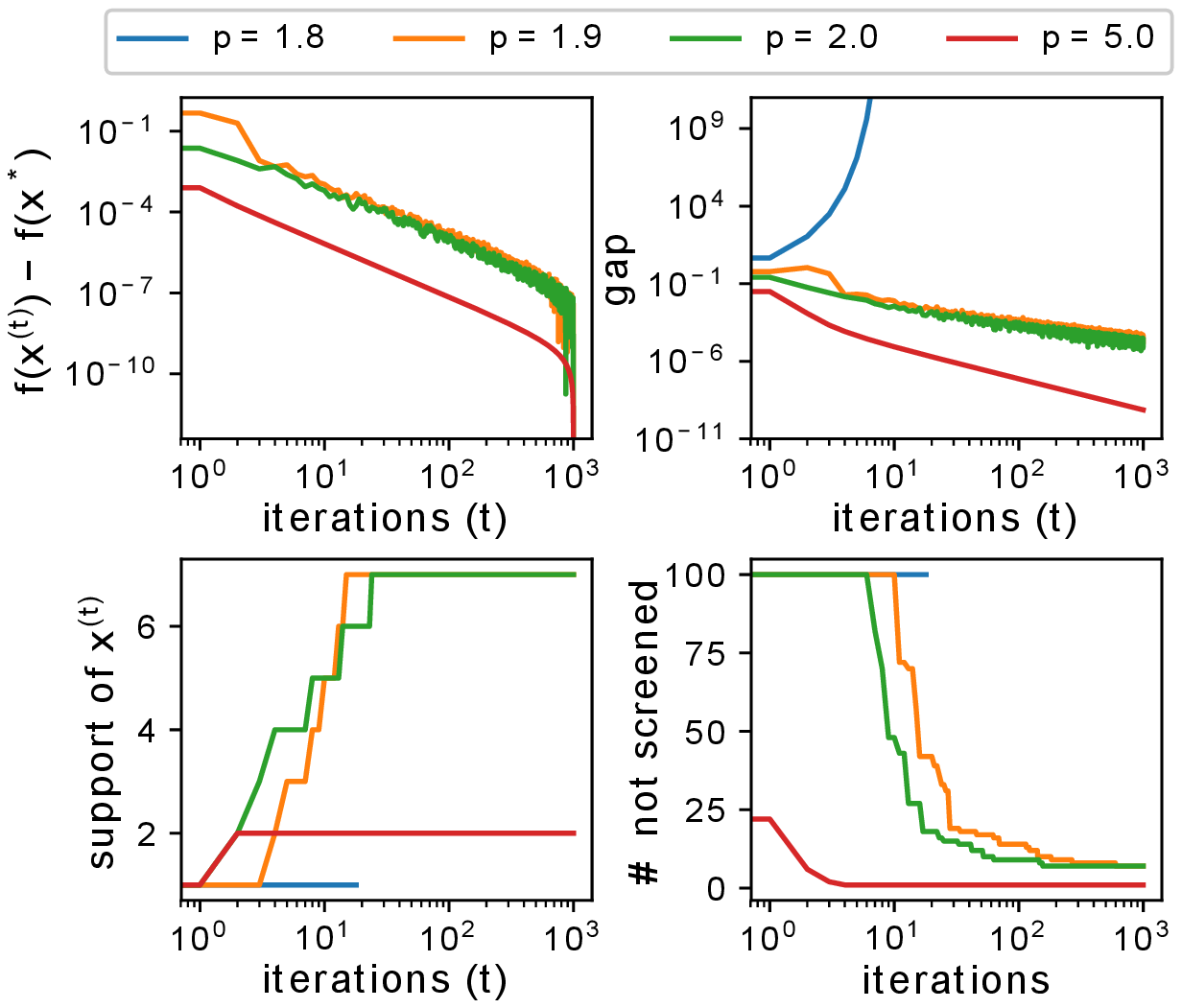}
    \caption{Convergence behavior of convex penalty with varying $\lambda$ and $p$, where $\phi(\xi) = p^{-1} \xi^p$ and $\kappa_\mP(x) = \|x\|_1$. Values when not swept: $p = 2$, $\lambda = 0.0001$.
    As suspected, $\lambda$ has an impact not just on the sparsity of $x^*$, but larger values lead to faster convergence and screening time (by constant factors).
    The convergence proof itself requires $p \geq 2$, and indeed we see divergence at $p = 1.75$, but at $p = 1.9$ the method still converges, showing the requirement is not necessarily tight. }
    \label{fig:convergence_convex}
\end{figure}
\begin{figure}
    \centering
    \includegraphics[width=2.25in]{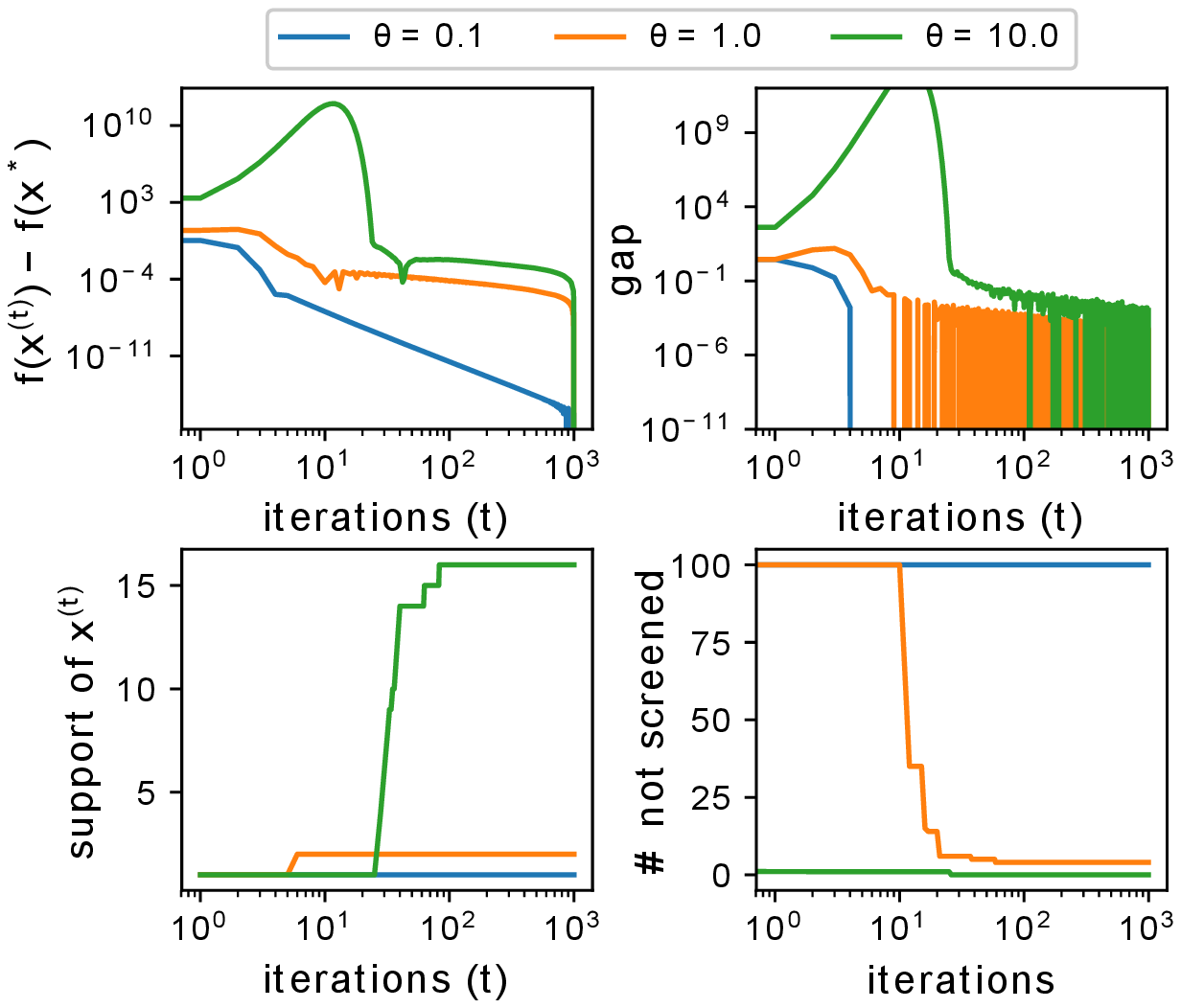}
    \includegraphics[width=2.25in]{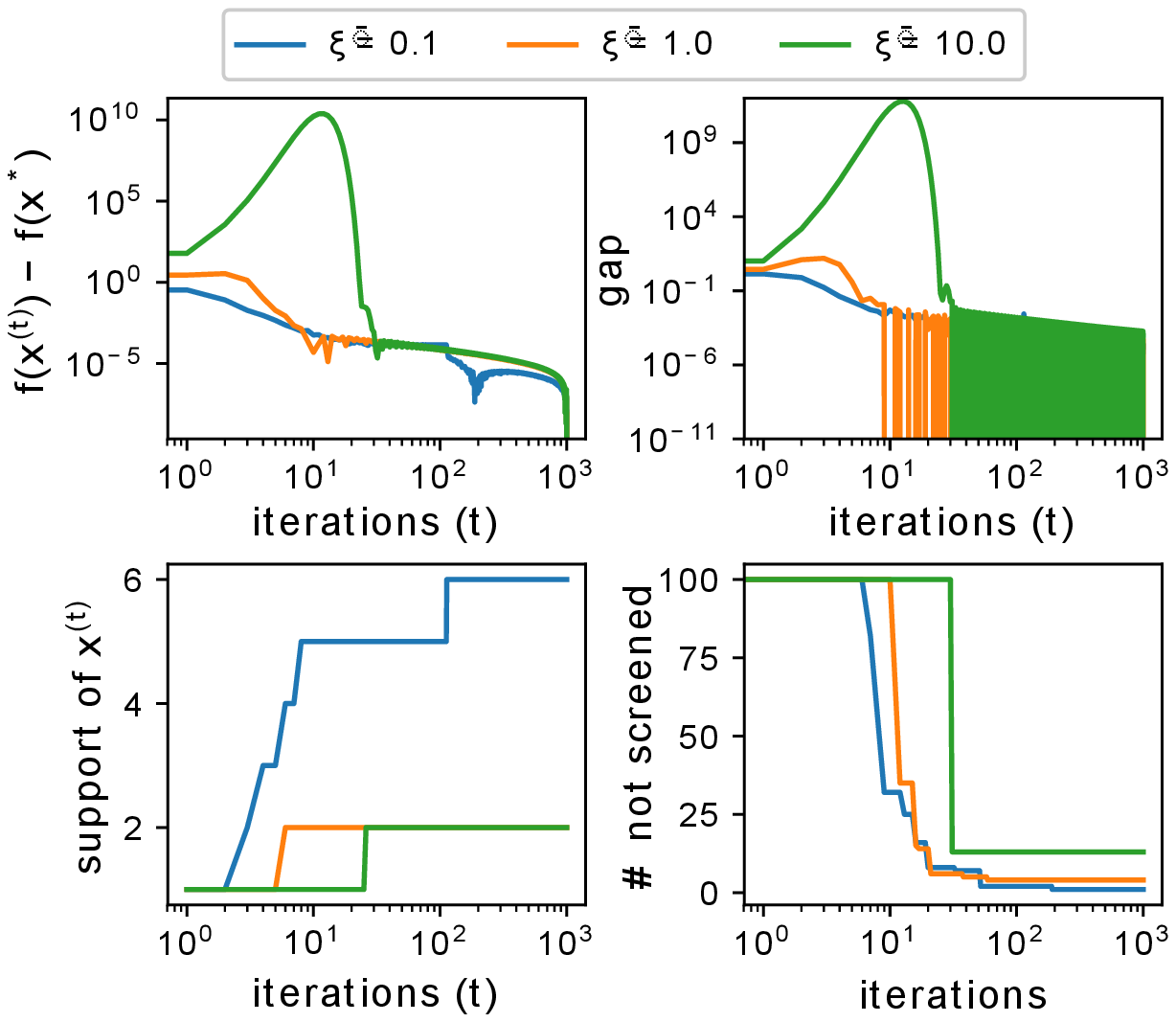}
    \caption{Convergence behavior using nonconvex LSP penalty \eqref{eq:defgamma:exp}, with varying $\theta$ and $\bar \xi$. Values when not swept: $p = 2$, $\lambda = 0.0001$, $\theta = 1$, $\bar \xi = 1$. Smaller $\theta$ and larger $\bar\xi$ leads to ``more concavity", and again more aggressive convergence and sparsity. Often it leads to more aggressive screening as well, but the pattern is not definitive (as indicated by bottom center left).}
    \label{fig:convergence_nonconvex}
\end{figure}

\paragraph{Gauges.} We now fix $\gamma(\xi) = \xi$ and consider different gauge penalties; specifically, the $\ell_1$ norm (fig. \ref{fig:sensing:ell1}), the TV norm (fig. \ref{fig:sensing:tv})
, and the latent overlapping group norm (fig. \ref{fig:sensing:group}), visualizing all the players: variable $x$, gradient $\nabla f(x)$, atom weights (sparse), and dual atom weights (screened).

\begin{figure}
    \centering
    \includegraphics[width=4.5in]{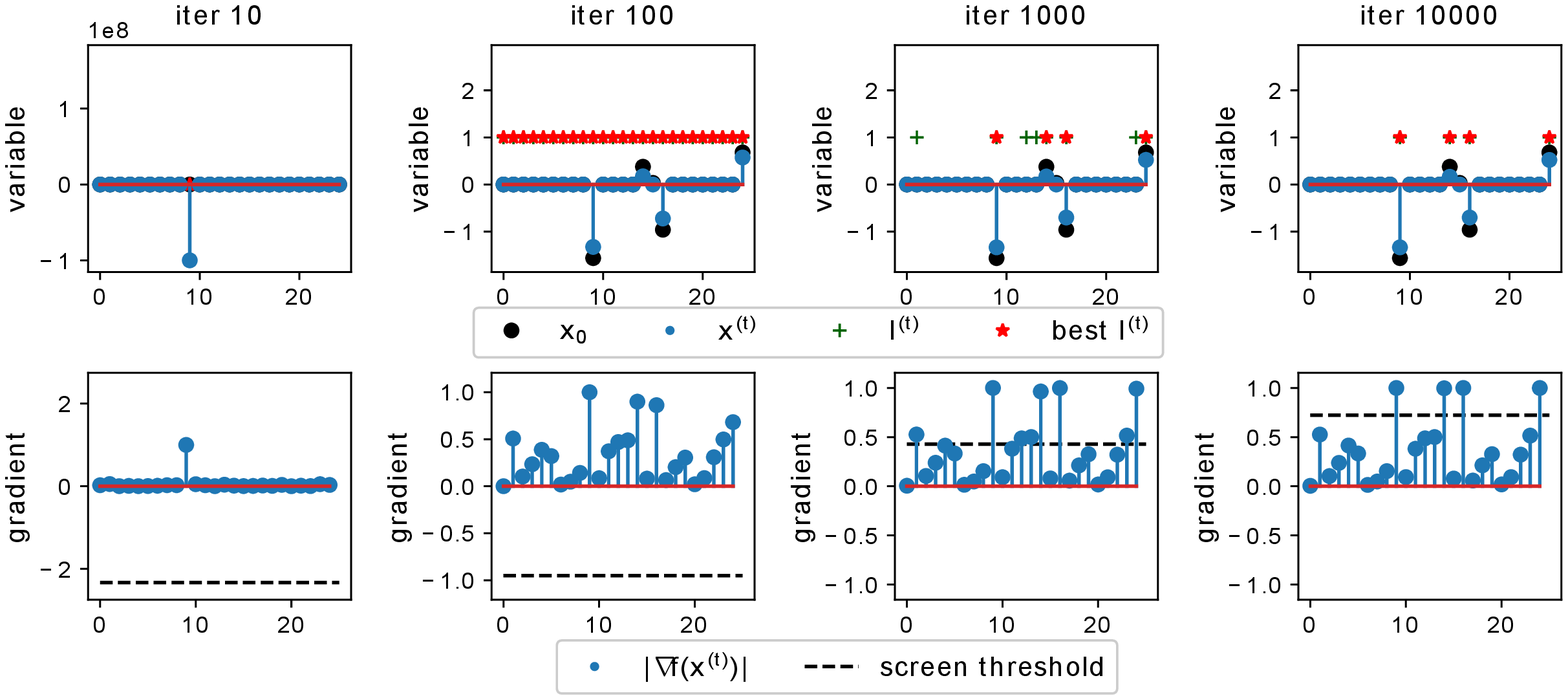}
    \caption{\textbf{Sparse sensing.} Top row shows $x^{(t)}$, $x^{(0)}$ the ground truth, $\mI^{(t)}$ the currently screened set, and the best $\mI^{(t)}$ which is the cumulatively screened set. Bottom row shows the gradient absolute values and the screen thresholds (normalized), which describe $\mI^{(t)}$. The screening method (red dots) quickly approach the true support (black stems) as the residual (screen threshold) gets closer to the maximum gradient value.}
    \label{fig:sensing:ell1}
\end{figure}

\begin{figure}
    \centering
    \includegraphics[width=4.5in]{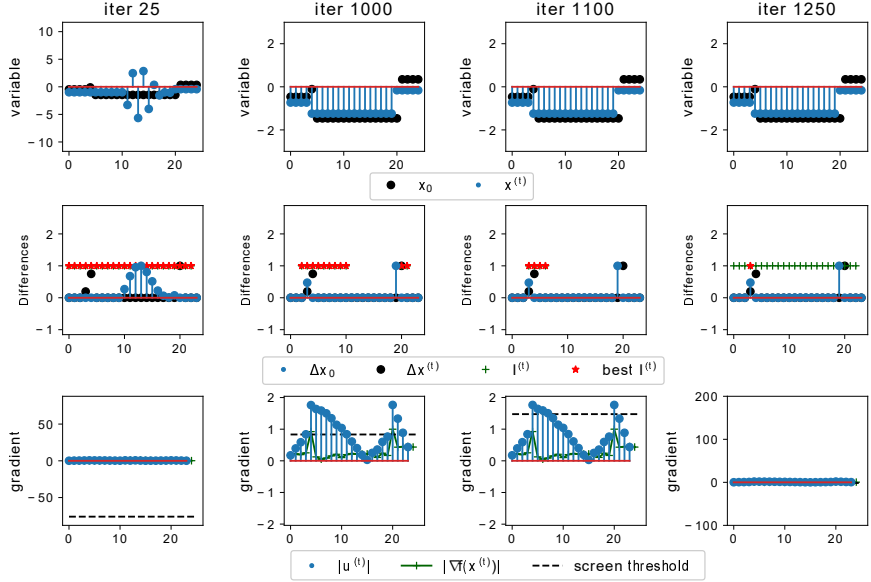}
    \caption{\textbf{1-D Smoothing.} Top row plots $x_0$ the peicewise smooth ground truth and $x^{(t)}$ at each iteration. Note the early iterations have significant jitter, which smooths out in later iterations. Second row shows $Dx_0$ and $Dx^{(t)}$ the difference vectors, which are the sparse elements. Third row plots $u^{(t)} = D^\dagger \nabla f(x^{(t)})$  along with $\nabla f(x^{(t)})$ (normalized); here, $u^{(t)}$ is the actual vector being screened, with peaks identifying edges in $x^{(t)}$. Although the structure of the atoms (3rd row) looks very different, the same screening principle applies. }
    \label{fig:sensing:tv}
\end{figure}

\begin{figure}
    \centering
    \includegraphics[width=4.5in]{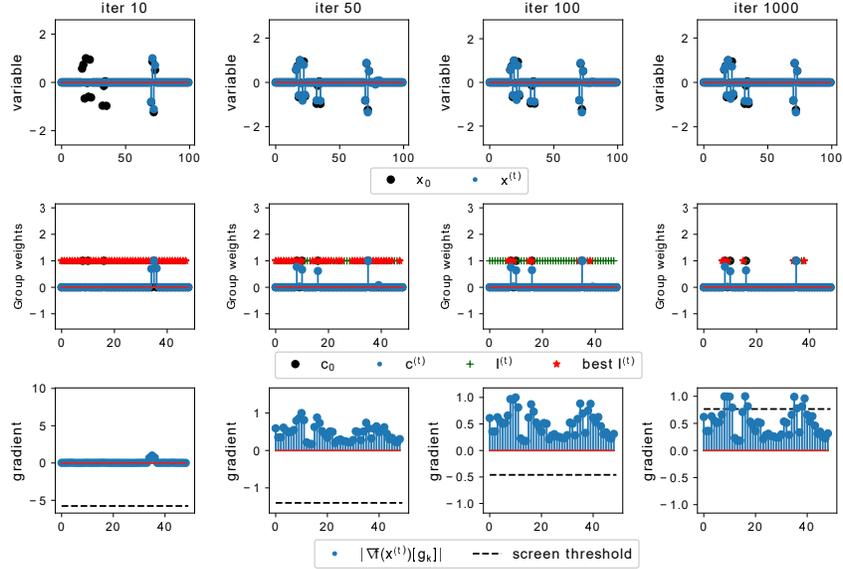}
    \caption{\textbf{Overlapping group sparsity.} The groups are given as  $g_i = \{2i,2i+1,2i+2,2i+3\}$ for $i = 1,...,n/2$. Top row plots $x_0$ (ground truth) and $x^{(t)}$. Second row shows the weights $c$ on each atom and screening variables; note that $c$ can only be determined after solving an optimization problem. Bottom row shows the dual weights $\|\nabla f(x^{(t)})_{g_i}\|_2$, which can be computed directly (no optimization needed) and are used to find the LMO and screen efficiently. The screening itself is erratic, but since it is safe, picking only the best $\mI^{(t)}$ at each iteration still quickly converges to the true support.
}
    \label{fig:sensing:group}
\end{figure}


\bibliographystyle{alpha}      

\end{document}